\newtheorem{theorem}{Theorem}[section]
\newtheorem{proposition}[theorem]{Proposition}
\newtheorem{lemma}[theorem]{Lemma}
\newtheorem{corollary}[theorem]{Corollary}
\newtheorem{remark}[theorem]{Remark}
\renewcommand{\proof}{{\noindent \bf Proof:\ }}
\newcommand{\eproof}{\hfill \mbox{${\square}$}}
\numberwithin{equation}{section}
\title[Existence  and asymptotic behavior of solutions for a Klein-Gordon system]{Existence  and asymptotic behavior of solutions for a Klein-Gordon system}
\author[Cl\'adio O. P. Da Silva]{Cl\'adio O. P. Da Silva}
\address[C. O. P. Da Silva]{Centro de Ci\^encias Humanas e Exatas, Universidade Estadual da Para\'iba, 58500-000 Monteiro PB, Brazil.}
\email{cladio@cche.uepb.edu.br}
\author[Aldo T. Louredo]{Aldo T. Louredo}
\address[A. T. Louredo]{Departamento de Matem\'atica, Universidade Estadual da Para\'iba, 58429-500 Campina Grande PB, Brazil.}
\email{aldolouredo@gmail.com}
\author[Manuel Milla Miranda]{Manuel Milla Miranda}
\address[M. Milla Miranda]{Departamento de Matem\'atica, Universidade Estadual da Para\'iba, 58429-500 Campina Grande PB, Brazil.}
\email{millamiranda@gmail.com}
\date{\today}
\begin{document}

\maketitle

\begin{abstract}
In this paper we study the global existence and uniqueness of solution for a Klein-Gordon equations system with mixed 
boundary conditions. Also we analyze the asymptotic behavior of this solution. 

\vskip .1 in \noindent {\it Mathematics Subject Classification 2010}: 35B40, 35A01,5L04, 81Q05. 
\newline {\it Key words and phrases:} Klein-Gordon, global solutions, asymptotic behavior, energy.
\end{abstract}


\section{Introduction} 

A mathematical model that describes the interaction of two electromagnetic fields $u$ and $v$ with masses $\sigma$ and $\varrho$, respectively, and with interaction constants $\phi>0$ and $\tau>0$ is given by following Klein-Gordon system
\begin{equation}\label{intd}
\left| \begin{array}{l}
u'' - \Delta u + \sigma^2 u+\phi v^2u =0
\\
v'' - \Delta v + \varrho^2 v + \tau u^2v =0.
\end{array}
\right.
\end{equation}
This model was proposed by Segal \cite{Segal}. Further generalizations these problem are given in Medeiros and Milla Miranda \cite{MA1}, Louredo and Milla Miranda \cite{A1} by using Galerkin methods and Medeiros and Milla Miranda \cite{MA2} by using potential well method, see also Medeiros and Menzala \cite{MM}. 

 On the non-existence of global solutions of a Klein-Gordon system we can to mention Aliev and Kazimov \cite{2015} where is considered a positive initial energy and Wang \cite{Wang}, with a non-negative potential but without damping terms in the equations. See also \cite{2016} and \cite{Liu}. The control is analyzed in Dolgopolik, Fradkov and Andrievsky \cite{2018} and in Park \cite{Park}. The asymptotic dynamical can be find in in Ferreira and Menzala \cite{FM} and Kim and Sunagawa \cite{2014}. On the decay of solutions of a Klein-Gordon-Schr\"odinger system we can to mention Almeida, Cavalcanti and Zanchetta\cite{AMZ}, Bisognin, Cavalcanti and Soriano \cite{Cavalcanti} and Yan, Boling, DaiWen and Xin \cite{2}. 
 
 In  \cite{MA1} the authors analyzed the existence and uniqueness of weak solutions of a mixed problem for system \eqref{intd} with null Dirichlet boundary conditions and coupled nonlinear terms $|v|^{\rho+2}|u|^{\rho}u$ and $|u|^{\rho+2}|v|^{\rho}v$ where $\rho$ is a real number with $\rho >-1$. They used the energy method. In \cite{MA2} the same authors studied the above problem but with coupled nonlinear terms $-|v|^{\rho+2}|u|^{\rho}u$ and $-|u|^{\rho+2}|v|^{\rho}v$. This change of sign in the coupled terms makes that the energy method does not work. They obtain the existence of weak solutions by using the potential well method which was introduced by Sattinger \cite{Sattinger}.

Let $\Omega$ be an open, bounded and connected set of $\mathbb{R}^n$ with its boundary $\Gamma$ of class $C^2$. Suppose also that $\Gamma$ is partitioned into $\Gamma_0$ and $\Gamma_1$ both with positive measure and $\overline{\Gamma}_1\cap \overline{\Gamma}_1$ empty, see Figure 1.
\begin{figure}[!h]
\begin{center}
    \includegraphics[scale=.7]{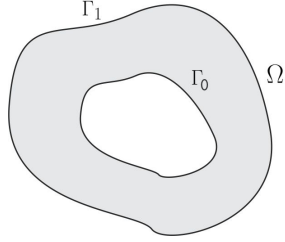} 
    \end{center}
\caption{The set $\Omega$.} 
\end{figure}

This paper is concerned with the global existence, uniqueness and uniform stability of the energy for the following initial-boundary value problem.
\begin{equation}\label{int}
\quad \quad \left| \begin{array}{lcc}
u'' - \Delta u + |u|^{\rho}|v|^{\rho}v =0; & \mbox{in} \quad &\Omega \times (0,\infty)\\
v'' - \Delta v + |u|^{\rho}u|v|^{\rho}=0; & \mbox{in} \quad &\Omega \times (0,\infty)\\
u=v=0; & \mbox{on} \quad &\Gamma_0 \times (0,\infty)\\
\frac{\partial u}{\partial \nu} + \delta u'=0 & \mbox{on} \quad &\Gamma_1 \times (0,\infty)\\
\frac{\partial v}{\partial \nu} + \delta v'=0 & \mbox{on} \quad &\Gamma_1 \times (0,\infty) \\
u(0)=u^0,v(0)=v^0, u'(0)=u^1,v'(0)=v^1, 
\end{array}
\right.
\end{equation}
where $\rho$ is a positive real number and $\nu(x)$ denotes the unit outward normal vector at $x \in \Gamma_1$ and $\delta$ is a real function belong to $W^{1,\infty}(\Gamma_1)$ such that $\delta(x)\geqslant \delta_0>0$ on $\Gamma_1$.
 
 We note that the energy of system \eqref{int} given by
\begin{equation*}
\begin{split}
E(t)&=\frac{1}{2}\{|u'(t)|^2 + |v'(t)|^2\} + \frac{1}{2}\{\|u(t)\|^2 + \|v(t)\|^2 \} \\
&+ \frac{1}{\rho +1}\int_{\Omega} (|u(t)|^{\rho}u(t))(|v(t)|^{\rho}v(t))dx,~ t \in [0, \infty).
\end{split}
\end{equation*}
does not definite sign. Therefore the energy method to obtain global solution of \eqref{int} does not work. To overcome this serious difficulty we use a method introduced by MiIla Miranda, Louredo and Medeiros \cite{Carrier}, which was inspirated in one idea of  Tartar \cite{LT}.  This method simplifies the potential well one. We complement our approach by using the Galerkin method with a special basis, due to the dissipative boundary conditions, and compactness argument. With the above considerations, we obtain a global weak solution of \eqref{int} with restrictions on the norm of initial data and on $\rho >0$ which depends on the embedding of the Sobolev spaces. 

The uniqueness of solutions is derived by using the energy method. Thus if $\rho \geqslant 1$, we consider $n=1,2$ and  if $\rho = 1$ we consider $n = 3$. This restriction on $n$ is due that is this part we need differentiate with respect to $t$ the difference of the nonlinear parts in order to apply the Mean Value Theorem.

To obtain the decay of the energy of problem \eqref{int}, we consider the same restrictions of the uniqueness of solutions and make $\delta(x)=m(x) \cdot \nu(x)$, where $m(x)=x-x^0, ~ x,x^0 \in \mathbb{R}^n$. In this conditions, by using the multiplier method and the ideas contained in Komornik and Zuazua \cite{Zuazua} and Komornik \cite{Komornik}, we obtain the exponential decay of the energy.   
  
This paper is organized as follow. In Section 2, we present some notations, hypotheses and main results and also we show results related to density, separability and trace. In Section 3 we established the proof of the existence of global solution and uniqueness. In Section 4 we prove the result about exponential decay of the energy.
\section{Notations and Main Result}
In this section, we present the notations, assumptions and the main results, however the proof will be developed in the next sections. Also we show results related to density, separability and trace that will be important throughout this paper. We start with some notations and hypotheses.
\subsection{Notation and hypotheses}
 The inner product and norm of $L^2(\Omega)$ are represented, respectively, by $(\cdot,\cdot)$ and $|\cdot|$. By $V$ is denoted the Hilbert space 
\[
V=\{u \in H^{1}(\Omega) \ ; \ u=0 \quad \mbox{on} \quad \Gamma_0\}
\]
equipped with the inner product and norm, respectively,
\[
((u,v))= \sum_{i=1}^{n}\int_{\Omega}\frac{\partial u}{\partial x_i}\frac{\partial v}{\partial x_i} dx \quad \mbox{and} \quad \|u\|^2= \sum_{i=1}^{n} \int_{\Omega}\left|\frac{\partial u}{\partial x_i}\right|^{2}.
\]

Let $\theta$ be a real number with $1\leqslant \theta<2$ such that $\frac{1}{\theta}+\frac{1}{\theta'}=1$. We consider the following Banach spaces equipped with respective norms
\[
W^{1,\theta'}(\Omega), \quad \|u\|= \left(\int_{\Omega}|u(x)|^{\theta'}dx+\sum_{i=1}^{n} \int_{\Omega}\left|\frac{\partial u(x)}{\partial x_i}\right|^{\theta'}dx\right)^{\frac{1}{\theta'}}
\]
and 
\[
W_{\Gamma_0}^{1,\theta'}(\Omega)=\{u\in W^{1,\theta'}(\Omega); u=0 ~ \mbox{on} ~ \Gamma_0\}; \quad \|u\|= \left(\sum_{i=1}^{n} \int_{\Omega}\left|\frac{\partial u(x)}{\partial x_i}\right|^{\theta'}dx\right)^{\frac{1}{\theta'}}.
\]

We also consider o Banach space
\[
\mathcal{X}=\{u \in V ; \Delta u \in L^{\theta}(\Omega)\}
\]
with the norm
\[
\|u\|_{\mathcal{X}}=\|u\|_V+\|\Delta u\|_{L^{\theta}(\Omega)}.
\]

Introduce the hypotheses
\begin{equation}\label{n=1,2}
\rho > 0 ~ \mbox{and} ~ \theta>1 ~\mbox{with} ~ 4\rho\theta\geqslant 1, \quad  \mbox{if} \quad n=1,2;  
\end{equation}
\begin{equation}\label{3n6}
\dfrac{n+2}{8n} \leqslant \rho \leqslant  \dfrac{n+2}{4(n-2)},  \quad \mbox{if} \quad 3 \leqslant n\leqslant 6.
\end{equation}
\begin{remark}\label{obsH1}
	$(i)$ Note that for $n \geqslant 3$ we have $0<\rho< \dfrac{2}{n-2}$, then $1<2(\rho+1)\leqslant q$ therefore the following embeddings of Sobolev
	\[
	V \hookrightarrow L^{q}(\Omega) \hookrightarrow L^{2(\rho+1)}(\Omega),
	\]
	are holds, where $q=\frac{2n}{n-2}$. In particular for $\rho=1$ we have $V \hookrightarrow L^{4}(\Omega)$. Thus, there exists positive constsnts $c_0$ and $c_1$ such that
	\begin{equation}\label{H1}
		\|v\|_{L^{2(\rho+1)}(\Omega)} \leqslant c_0||v||, \quad \mbox{and} \quad \|v\|_{L^{4}(\Omega)} \leqslant c_1||v|| ~\forall ~ v \in  V.
	\end{equation}
	\\
	$(ii)$ Under the restrictions $\eqref{3n6}$ on $\rho$ and $n$ we obtain
	\[
	V\hookrightarrow L^{q}(\Omega)\hookrightarrow L^{\frac{8n\rho}{n+2}}(\Omega) \quad \mbox{and} \quad V\hookrightarrow L^{q}(\Omega)\hookrightarrow L^{\frac{4n}{n+2}}(\Omega).
	\]
\end{remark}	

Introduce the following restrictions on the initial data and some constants:
\begin{equation}\label{restriction}
\|u_0\|, ~\|v_0\| < \lambda^{*} \quad \text{and} \quad L < {\frac14} (\lambda^{*})^2,
\end{equation}
where
\begin{equation}\label{Hc21}
	\lambda^{*} =\left( \dfrac{1}{4N}\right)^{\frac{1}{2\rho}};
\end{equation}

\begin{equation}\label{Hc22}
	L=\dfrac{1}{2}\left[|u^1|^2 + |v^1|^2\right]+ \dfrac{1}{2}\left[\|u^0\|^2 + \|v^0\|^2\right]+ N\left[\|u^0\|^{2(\rho+1)} + \|v^0\|^{2(\rho+1)}\right];
\end{equation}

\begin{equation}\label{Nc}
	N=\frac{c_0^{2(\rho +1)}}{2(\rho +1)} ;
\end{equation}

\begin{equation}\label{delta}
	\delta \in W^{1,\infty}(\Gamma_1) \quad \mbox{such that} \quad \delta(x)\geqslant \delta_0>0 \quad \mbox{on} \quad \Gamma_1.
\end{equation}

\begin{theorem}\label{theorem1}
	Assume hypotheses \eqref{3n6} and \eqref{restriction}--\eqref{delta}. Consider $u^0, v^0 \in V $ and $u^1, v^1 \in L^2(\Omega)$. Then there exists functions $u,v $ in the class
	\begin{equation*}\label{class1}
	\left| \begin{array}{l}
	u, v \in L^{\infty}(0,\infty; V),  
	\\
	u', v' \in L^{\infty}(0,\infty; L^2(\Omega))
	\\
	u', v' \in L^{\infty}(0,\infty; L^2(\Gamma_1)), 

	\end{array}
	\right.
	\end{equation*}
	such that $u$ and $v$ satisfies the equations
	\begin{equation*}\label{eq1}
	\begin{split}
	&u'' - \Delta u + |u|^{\rho}|v|^{\rho}v =0 \quad \text{in} \quad H^{-1}_{loc}(0,\infty;L^{q'}(\Omega))\\ 
	&v'' - \Delta v + |u|^{\rho}u|v|^{\rho} =0 \quad \text{in} \quad  H^{-1}_{loc}(0,\infty; L^{q'}(\Omega))\\
	&\frac{\partial u}{\partial \nu} + \delta(x)u'=0 \quad \mbox{in} \quad  L_{loc}^{2}(0,\infty; L^{2}(\Gamma_1))\\
	&\frac{\partial v}{\partial \nu} + \delta(x)v'=0 \quad \mbox{in} \quad L_{loc}^{2}(0,\infty; L^{2}(\Gamma_1)).
	\end{split}
	\end{equation*}
	and the initial conditions
	\[
	u(0)=u^0, v(0)=v^0 \quad u'(0)=u^1 ,~v'(0)=v^1
	\]
\end{theorem}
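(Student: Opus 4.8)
The plan is to build the solution as a weak-$*$ limit of Galerkin approximations in a \emph{special basis} adapted to the dissipative boundary conditions, to keep these approximations bounded by a continuity (potential-well type) argument that bypasses the fact that the energy $E(t)$ has no definite sign, and to pass to the limit by Aubin--Lions compactness. Concretely, using the separability and density statements of Section 2 I would fix a basis $(w_j)_{j\ge1}$ of $\mathcal{X}$ (which is total in $V$), set $V_m=\mathrm{span}\{w_1,\dots,w_m\}$, and look for $u_m=\sum_{j\le m}g_{jm}(t)w_j$, $v_m=\sum_{j\le m}h_{jm}(t)w_j$ with $u_m(0)\to u^0$, $v_m(0)\to v^0$ in $V$ and $u_m'(0)\to u^1$, $v_m'(0)\to v^1$ in $L^2(\Omega)$, solving for every $w\in V_m$
\[
(u_m'',w)+((u_m,w))+\int_{\Gamma_1}\delta\,u_m'\,w\,d\Gamma+\big(|u_m|^{\rho}|v_m|^{\rho}v_m,\,w\big)=0
\]
and the companion equation for $v_m$. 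Since the $L^2(\Omega)$-Gram matrix of $\{w_j\}$ is invertible and the reaction terms are continuous, this is a solvable ODE system and produces $u_m,v_m$ on a maximal interval $[0,T_m)$.

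\emph{A priori estimates.} Taking $w=u_m'$ in the first equation, $w=v_m'$ in the second, and adding, the coupled reaction terms collapse into $\frac{1}{\rho+1}\frac{d}{dt}\int_\Omega|u_m|^\rho u_m\,|v_m|^\rho v_m\,dx$, so the approximate energy $E_m(t)$ (the functional $E(t)$ above with $u,v$ replaced by $u_m,v_m$) satisfies $\frac{d}{dt}E_m(t)+\int_{\Gamma_1}\delta(|u_m'|^2+|v_m'|^2)\,d\Gamma=0$. Estimating the sign-indefinite coupled term by Young's inequality and the embedding $V\hookrightarrow L^{2(\rho+1)}(\Omega)$ (Remark \ref{obsH1}, hypothesis \eqref{3n6}) gives
\[
\tfrac12\big(|u_m'|^2+|v_m'|^2\big)+\tfrac12\big(\|u_m\|^2+\|v_m\|^2\big)\le E_m(0)+N\big(\|u_m\|^{2(\rho+1)}+\|v_m\|^{2(\rho+1)}\big),
\]
with $E_m(0)\le L_m\to L<\tfrac14(\lambda^{*})^2$. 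The crucial point is that whenever $\|u_m(t)\|,\|v_m(t)\|\le\lambda^{*}$, the choice of $\lambda^{*}$ in \eqref{Hc21} forces $N(\|u_m\|^{2(\rho+1)}+\|v_m\|^{2(\rho+1)})\le\tfrac14(\|u_m\|^2+\|v_m\|^2)$, which absorbs into the left-hand side and yields $\|u_m(t)\|^2+\|v_m(t)\|^2\le 4L_m<(\lambda^{*})^2$. Starting from the strict inequality $\|u_m(0)\|,\|v_m(0)\|<\lambda^{*}$ (true for large $m$) and using continuity of $t\mapsto(\|u_m(t)\|,\|v_m(t)\|)$, this strict bound propagates over the whole of $[0,T_m)$; hence the approximate solutions never leave the ball of radius $\lambda^{*}$ in $V$ and, being bounded, extend to $T_m=\infty$. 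One then reads off the uniform bounds: $u_m,v_m$ in $L^\infty(0,\infty;V)$, $u_m',v_m'$ in $L^\infty(0,\infty;L^2(\Omega))$, and, using $\delta\ge\delta_0>0$ in the energy identity, $u_m',v_m'$ in $L^2(0,\infty;L^2(\Gamma_1))$.

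\emph{Passage to the limit.} Along a subsequence, $u_m\rightharpoonup^{*}u$, $v_m\rightharpoonup^{*}v$ in $L^\infty(0,T;V)$, $u_m'\rightharpoonup^{*}u'$, $v_m'\rightharpoonup^{*}v'$ in $L^\infty(0,T;L^2(\Omega))$, and $u_m',v_m'\rightharpoonup u',v'$ in $L^2(0,T;L^2(\Gamma_1))$. Aubin--Lions gives $u_m\to u$, $v_m\to v$ strongly in $L^2(0,T;L^2(\Omega))$, hence a.e. on $\Omega\times(0,T)$; since $|u_m|^\rho|v_m|^\rho v_m$ is bounded in $L^\infty(0,T;L^{r}(\Omega))$ for some $r>1$ (Hölder together with the embeddings guaranteed by \eqref{3n6}), Lions' lemma yields $|u_m|^\rho|v_m|^\rho v_m\rightharpoonup|u|^\rho|v|^\rho v$, and analogously for $|u_m|^\rho u_m|v_m|^\rho$. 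Multiplying the approximate equations by $\phi(t)\in\mathcal{D}(0,T)$, passing to the limit, and using density of $\bigcup_m V_m$ in $V$, one gets the variational identity for every $w\in V$; restricting to $w\in\mathcal{D}(\Omega)$ gives the two evolution equations in $H^{-1}_{loc}(0,\infty;L^{q'}(\Omega))$, and the weak continuity in $t$ of $u_m,u_m'$ (and $v_m,v_m'$) transfers the initial data $u(0)=u^0$, $v(0)=v^0$, $u'(0)=u^1$, $v'(0)=v^1$.

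\emph{Main obstacle.} The delicate step — and the reason for introducing the space $\mathcal{X}$ and the trace results of Section 2 — is recovering the dissipative boundary conditions $\frac{\partial u}{\partial\nu}+\delta u'=0$ and $\frac{\partial v}{\partial\nu}+\delta v'=0$ in $L^2_{loc}(0,\infty;L^2(\Gamma_1))$. For this I would use the variational identity with test functions $w\in V$ that do not vanish on $\Gamma_1$, substitute $\Delta u=u''+|u|^\rho|v|^\rho v$ into a generalized Green formula valid on $\mathcal{X}$ (supplied by Section 2), identify the resulting normal trace with $-\delta u'$, and finally invoke $u'\in L^2(0,\infty;L^2(\Gamma_1))$ and $\delta\in W^{1,\infty}(\Gamma_1)$ to obtain the stated regularity; similarly for $v$. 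I expect this boundary identification, rather than the energy estimate, to be the main technical hurdle, whereas conceptually it is the continuity argument of the second step that makes the indefiniteness of $E(t)$ harmless and plays the role of the potential-well method.
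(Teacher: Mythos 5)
Your proposal is correct and follows essentially the same route as the paper: Faedo--Galerkin approximation, the energy identity with the coupled term absorbed via $V\hookrightarrow L^{2(\rho+1)}(\Omega)$ and the threshold $\lambda^{*}$, a continuity/propagation argument (the paper phrases it as a contradiction in Lemma 3.2) to keep $\|u_m(t)\|,\|v_m(t)\|<\lambda^{*}$ globally, Aubin--Lions plus Lions' lemma to identify the nonlinear limits, and the generalized Green formula on $\mathcal{X}$ (Theorem 2.13 with $\theta=q'$) to recover the boundary conditions. The only cosmetic difference is that the paper takes the basis in $V$ rather than in $\mathcal{X}$ for this theorem, reserving the special bases for Theorems 2.6 and 2.9.
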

\begin{corollary}
We obtain similar results to Theorem \ref{theorem1} for the case $\rho>0$ and $n=1,2$.
\end{corollary}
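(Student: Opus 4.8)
The plan is to use the Faedo–Galerkin method with a special basis adapted to the Robin-type boundary conditions on $\Gamma_1$, combined with the Tartar-type device (following \cite{Carrier}) that replaces the potential-well construction. First I would fix a basis $(w_j)_{j\ge1}$ of $V$ whose elements are regular enough that $\partial w_j/\partial\nu$ makes sense on $\Gamma_1$ — this is exactly where the density/separability/trace results announced for Section 2 are needed, since the dissipative conditions $\partial u/\partial\nu+\delta u'=0$ cannot be imposed in a naive Galerkin scheme. Writing $u_m(t)=\sum_{j=1}^m g_{jm}(t)w_j$, $v_m(t)=\sum_{j=1}^m h_{jm}(t)w_j$, one obtains an approximate system: an ODE system in $(g_{jm},h_{jm})$ with the boundary dissipation entering through the bilinear form $\int_{\Gamma_1}\delta\, u_m' w_j\,d\Gamma$, which has a local-in-time solution by Carathéodory/Picard. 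The coupled nonlinear terms $|u_m|^\rho|v_m|^\rho v_m$ and $|u_m|^\rho u_m|v_m|^\rho$ are continuous in the approximate variables by the Sobolev embeddings recorded in Remark \ref{obsH1}, so the approximate problem is well posed.

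Next I would derive the basic energy identity at level $m$: multiplying the first equation by $g_{jm}'$, the second by $h_{jm}'$ and summing gives
\begin{equation*}
\frac{d}{dt}\Big[\tfrac12(|u_m'|^2+|v_m'|^2)+\tfrac12(\|u_m\|^2+\|v_m\|^2)+\tfrac{1}{\rho+1}\!\int_\Omega(|u_m|^\rho u_m)(|v_m|^\rho v_m)\,dx\Big]=-\!\int_{\Gamma_1}\delta\big(|u_m'|^2+|v_m'|^2\big)\,d\Gamma\le 0,
\end{equation*}
using $\frac{d}{dt}\int_\Omega(|u_m|^\rho u_m)(|v_m|^\rho v_m)=(\rho+1)\int_\Omega[(|u_m|^\rho|v_m|^\rho v_m)u_m'+(|u_m|^\rho u_m|v_m|^\rho)v_m']$. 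Since the potential term is sign-indefinite, one cannot read off an a priori bound directly. Here I would invoke the Tartar idea: using $\big|\int_\Omega(|u_m|^\rho u_m)(|v_m|^\rho v_m)\big|\le \tfrac12(\|u_m\|_{L^{2(\rho+1)}}^{2(\rho+1)}+\|v_m\|_{L^{2(\rho+1)}}^{2(\rho+1)})\le N(\|u_m\|^{2(\rho+1)}+\|v_m\|^{2(\rho+1)})$ by \eqref{H1} and \eqref{Nc}, one shows that as long as $\|u_m(t)\|,\|v_m(t)\|<\lambda^*$ the potential term is dominated by a fixed fraction of the elastic energy, so that $E_m(t)\le E_m(0)\le L<\tfrac14(\lambda^*)^2$ forces, via a continuity/bootstrap argument on $[0,T_m)$, that $\|u_m(t)\|^2,\|v_m(t)\|^2$ stay strictly below $(\lambda^*)^2$ for all $t$. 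This simultaneously gives global existence of the approximate solutions and uniform bounds for $u_m,v_m$ in $L^\infty(0,\infty;V)$, for $u_m',v_m'$ in $L^\infty(0,\infty;L^2(\Omega))$, and — integrating the energy identity in time — for $u_m',v_m'$ in $L^2_{loc}(0,\infty;L^2(\Gamma_1))$.

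Then I would pass to the limit. Weak-$*$ compactness yields subsequences with $u_m\stackrel{*}{\rightharpoonup}u$, $v_m\stackrel{*}{\rightharpoonup}v$ in $L^\infty(0,\infty;V)$ and $u_m'\stackrel{*}{\rightharpoonup}u'$, $v_m'\stackrel{*}{\rightharpoonup}v'$ in $L^\infty(0,\infty;L^2(\Omega))$; Aubin–Lions compactness on each $\Omega\times(0,T)$ gives strong $L^2$ convergence of $u_m,v_m$ and, after extracting again, a.e.\ convergence, which is what is needed to identify the nonlinear limits $|u_m|^\rho|v_m|^\rho v_m\to|u|^\rho|v|^\rho v$ and $|u_m|^\rho u_m|v_m|^\rho\to|u|^\rho u|v|^\rho$ in the appropriate $L^{q'}$-valued distribution sense, using the growth bounds from Remark \ref{obsH1} together with a vector-valued dominated-convergence / Lions' lemma argument. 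The trace operator $u_m\mapsto u_m|_{\Gamma_1}$ being continuous and the boundary velocities bounded in $L^2_{loc}(0,\infty;L^2(\Gamma_1))$, one recovers $u'|_{\Gamma_1},v'|_{\Gamma_1}$ in that space and the dissipative boundary relations $\partial u/\partial\nu+\delta u'=0$, $\partial v/\partial\nu+\delta v'=0$ in $L^2_{loc}(0,\infty;L^2(\Gamma_1))$ by testing with the special basis and using the associated Green formula; the initial conditions follow from $u_m(0)\to u^0$, $v_m(0)\to v^0$ in $V$ and $u_m'(0)\to u^1$, $v_m'(0)\to v^1$ in $L^2(\Omega)$ together with the continuity $u,v\in C_w([0,\infty);V)$, $u',v'\in C_w([0,\infty);L^2(\Omega))$.

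The main obstacle I expect is twofold: first, making the special-basis Galerkin scheme legitimate, i.e. constructing a basis of $V$ along which the Robin boundary term can be handled and the approximate solutions are regular enough on $\Gamma_1$ — this rests entirely on the trace and density lemmas promised for Section 2. Second, and more delicate analytically, is the sign-indefinite potential: the passage from ``$E_m(0)$ small'' to ``$\|u_m(t)\|,\|v_m(t)\|$ bounded for all $t$'' requires the sharp calibration of $\lambda^*$, $L$ and $N$ in \eqref{Hc21}–\eqref{Nc} and a careful continuity argument to exclude that the trajectory ever reaches the sphere of radius $\lambda^*$; the exponent restrictions \eqref{3n6} (and the $n=1,2$ case in the Corollary) enter precisely to guarantee the embedding constants $c_0$ in \eqref{Nc} exist and that the nonlinearity lies in $L^{q'}$. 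The Corollary then follows by the same argument: for $n=1,2$ the embedding $V\hookrightarrow L^p(\Omega)$ holds for every finite $p$, so hypothesis \eqref{n=1,2} with $4\rho\theta\ge1$ supplies the analogue of Remark \ref{obsH1}, and every step above — approximate problem, sign-indefinite energy estimate via the Tartar device, compactness, limit passage — goes through verbatim with $q'$ replaced by $\theta$ and the relevant Sobolev constants.

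\eproof
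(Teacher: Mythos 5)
Your plan matches the paper's proof of Theorem \ref{theorem1} essentially step for step --- Galerkin approximation, the energy identity, the Tartar-type calibration of $\lambda^{*}$, $N$, $L$ together with the continuity/contradiction argument that keeps $\|u_m(t)\|,\|v_m(t)\|$ below $\lambda^{*}$ (the paper's Lemma \ref{Lc1} and the function $J$), Aubin--Lions plus a.e.\ convergence and Lions' lemma to identify the nonlinear limits, and the trace theorem to recover the dissipative boundary conditions --- and the corollary is indeed disposed of exactly as you say, by noting that for $n=1,2$ the embedding $V\hookrightarrow L^{p}(\Omega)$ holds for every finite $p$ so that hypothesis \eqref{n=1,2} replaces Remark \ref{obsH1}. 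The only cosmetic discrepancy is that for Theorem \ref{theorem1} the paper uses an ordinary basis of $V$ (the boundary term $\int_{\Gamma_1}\delta u_m' w_j\,d\Gamma$ already makes sense for $w_j\in V$ via the trace), reserving the special basis for Theorem \ref{theorem3}; this does not affect the argument.
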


Set the hypothesis
\begin{equation}\label{n=1,2s}
\rho > 0 \quad \mbox{and} \quad \theta>1 \quad \mbox{with} \quad 4\rho\theta\geqslant 1, \quad  \mbox{if} \quad n=1,2; 
\end{equation}
\begin{equation}\label{7n11}
\rho =  \dfrac{2}{n-2} \quad \mbox{and} \quad \theta=\dfrac{n}{n-2},  \quad \mbox{if} \quad 7 \leqslant n\leqslant 11.
\end{equation}

\begin{remark}
	Under the restrictions \eqref{7n11} on $\rho$ and $n$ we have:   
	\[
	V \hookrightarrow L^{q}(\Omega) \hookrightarrow L^{4\rho \theta}(\Omega), \quad \mbox{and} \quad  V \hookrightarrow  L^{2\theta}(\Omega)
	\]
\end{remark}	

\begin{theorem}\label{theorem2}
	Consider $u^0, v^0 \in V $ and $u^1, v^1 \in L^2(\Omega)$. Then under hypothesis \eqref{restriction}--\eqref{delta} and \eqref{7n11}, we have that there exists functions $u,v $ in the class
	\begin{equation*}\label{class2}
	\left| \begin{array}{l}
	u, v \in L^{\infty}(0,\infty; V);
	\\
	u', v' \in L^{\infty}(0,\infty; L^2(\Omega));
	\\
	u', v' \in L^{\infty}(0,\infty; L^2(\Gamma_1)).
	\end{array}
	\right.
	\end{equation*}
	such that $u$ and $v$ satisfies the equations
	\begin{equation*}\label{eq2}
	\begin{split}
	&u'' - \Delta u + |u|^{\rho}|v|^{\rho}v =0 \quad \text{in} \quad H^{-1}_{loc}(0,\infty;L^{\theta}(\Omega))\\ 
	&v'' - \Delta v + |u|^{\rho}u|v|^{\rho} =0 \quad \text{in} \quad  H_{loc}^{-1}(0,\infty; L^{\theta}(\Omega))\\
	&\frac{\partial u}{\partial \nu} + \delta(x)u'=0 \quad \mbox{in} \quad L^{2}_{loc}(0,\infty; L^{2}(\Gamma_1))\\
	&\frac{\partial v}{\partial \nu} + \delta(x)v'=0 \quad \mbox{in} \quad L^{2}_{loc}(0,\infty; L^{2}(\Gamma_1)).
	\end{split}
	\end{equation*}
	and the initial conditions
	\[
	u(0)=u^0, v(0)=v^0 \quad u'(0)=u^1 ,~v'(0)=v^1
	\]
\end{theorem}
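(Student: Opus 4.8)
The plan is to follow the Faedo--Galerkin scheme exactly as in Theorem \ref{theorem1}, with the single but essential modification that in dimensions $7 \le n \le 11$ the nonlinear terms $|u|^\rho|v|^\rho v$ and $|u|^\rho u |v|^\rho$ are only controlled in $L^\theta(\Omega)$ rather than in $L^{q'}(\Omega)$, which is why the equations are now realized in $H^{-1}_{loc}(0,\infty;L^\theta(\Omega))$. First I would fix a special basis $\{w_j\}$ of $V$ adapted to the dissipative boundary condition (the basis of Milla Miranda--Louredo--Medeiros \cite{Carrier}), build the approximate solutions $u_m,v_m$ on $V_m=\mathrm{span}\{w_1,\dots,w_m\}$ solving the finite-dimensional ODE system, and show this system has a solution on a maximal interval $[0,t_m)$.

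The core of the argument is the a priori estimate. Multiplying the $u_m$-equation by $u_m'$, the $v_m$-equation by $v_m'$, adding, and integrating over $\Omega$ produces the energy identity: the boundary terms give $+\int_{\Gamma_1}\delta(x)(|u_m'|^2+|v_m'|^2)\,d\Gamma \ge \delta_0\int_{\Gamma_1}(|u_m'|^2+|v_m'|^2)\,d\Gamma \ge 0$ (dissipation), and the two coupled nonlinear terms combine, as in \cite{Carrier}, into $\frac{d}{dt}\big[\frac{1}{\rho+1}\int_\Omega (|u_m|^\rho u_m)(|v_m|^\rho v_m)\,dx\big]$. Hence, writing $E_m(t)$ for the approximate energy, $E_m$ is nonincreasing. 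The indefinite sign of the potential term is then handled by the Tartar-type device: using the embedding $V\hookrightarrow L^{2(\rho+1)}(\Omega)$ from Remark \ref{obsH1} (valid here since $\rho=2/(n-2)<2/(n-2)$ is the borderline case and $2(\rho+1)\le q$), one bounds
\[
\Big|\tfrac{1}{\rho+1}\!\int_\Omega (|u_m|^\rho u_m)(|v_m|^\rho v_m)\,dx\Big|
\le N\big(\|u_m\|^{2(\rho+1)}+\|v_m\|^{2(\rho+1)}\big),
\]
with $N$ as in \eqref{Nc}, so that on the set where $\|u_m(t)\|,\|v_m(t)\|<\lambda^*$ the energy controls $\tfrac14(\|u_m\|^2+\|v_m\|^2)$ from below. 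A continuity/bootstrap argument using $E_m(t)\le E_m(0)\le L<\tfrac14(\lambda^*)^2$ (guaranteed by \eqref{restriction}) shows $\|u_m(t)\|,\|v_m(t)\|$ never reach $\lambda^*$; hence $t_m=\infty$ and $(u_m),(v_m)$ are bounded in $L^\infty(0,\infty;V)$, $(u_m'),(v_m')$ in $L^\infty(0,\infty;L^2(\Omega))$, and, from the boundary dissipation term integrated in time, $(u_m'),(v_m')$ are bounded in $L^2_{loc}(0,\infty;L^2(\Gamma_1))$. The nonlinear terms: since $\rho=\frac{2}{n-2}$ and $\theta=\frac{n}{n-2}$, one has $|u_m|^\rho|v_m|^\rho v_m \in L^\theta(\Omega)$ with norm controlled via the Hölder splitting and the embeddings $V\hookrightarrow L^{4\rho\theta}(\Omega)$, $V\hookrightarrow L^{2\theta}(\Omega)$ stated in the Remark preceding the theorem; this gives a uniform bound in $L^\infty(0,\infty;L^\theta(\Omega))$, whence $(u_m''),(v_m'')$ are bounded in, say, $L^\infty(0,\infty;\mathcal{X}')$-type spaces sufficient to pass to the limit.

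Finally I would extract (via Banach--Alaoglu and diagonalization) weak-$*$ convergent subsequences $u_m\rightharpoonup u$, $v_m\rightharpoonup v$ in $L^\infty(0,\infty;V)$, $u_m'\rightharpoonup u'$, $v_m'\rightharpoonup v'$ in $L^\infty(0,\infty;L^2(\Omega))$ and in $L^2_{loc}(0,\infty;L^2(\Gamma_1))$; by Aubin--Lions compactness $u_m\to u$, $v_m\to v$ strongly in $L^2_{loc}(0,\infty;L^2(\Omega))$ and a.e., so (using the growth bound and Lions' lemma on a.e.\ convergent bounded sequences) the nonlinear terms converge weakly in $L^{\theta}$ on bounded time intervals to $|u|^\rho|v|^\rho v$ and $|u|^\rho u|v|^\rho$. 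Passing to the limit in the variational formulation against test functions in $V_{m_0}$ and then using density of $\bigcup_m V_m$ in $V$ yields the equations in $H^{-1}_{loc}(0,\infty;L^\theta(\Omega))$; the limit of the boundary terms together with a trace/Green-formula identification (the density and trace results announced in Section 2) recovers $\frac{\partial u}{\partial\nu}+\delta(x)u'=0$ and likewise for $v$ in $L^2_{loc}(0,\infty;L^2(\Gamma_1))$; and the initial data are attained because $u_m(0),v_m(0)\to u^0,v^0$ in $V$ and $u_m'(0),v_m'(0)\to u^1,v^1$ in $L^2(\Omega)$. The main obstacle I anticipate is making the identification of the nonlinear limit and the boundary term rigorous in the borderline range $7\le n\le 11$: here the dual exponent $\theta$ is small, the embeddings are tight (one really needs $4\rho\theta\le q$, i.e.\ $\rho=\frac{2}{n-2}$ is forced), and verifying that the products $|u_m|^\rho u_m$ stay in the right $L^p$ space uniformly — and that their weak limits are the expected nonlinearities rather than some defect — is the delicate point; the boundary-dissipation estimate and the continuity argument bounding $\|u_m\|,\|v_m\|$ below $\lambda^*$ are, by contrast, routine once the energy identity is in place.
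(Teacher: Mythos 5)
Your overall strategy (Faedo--Galerkin, the energy identity, the Tartar-type device with $J(\lambda)=\frac14\lambda^2-N\lambda^{2(\rho+1)}$, the continuity argument keeping $\|u_m\|,\|v_m\|<\lambda^*$, Aubin--Lions and a.e.\ convergence to identify the nonlinear limits) is indeed the same as the paper's, which simply says ``repeat the proof of Theorem \ref{theorem1}.'' But you miss the one point that is actually new in Theorem \ref{theorem2} and is the reason the paper states it separately: the choice of the Galerkin basis. You propose to take a basis of $V$. For $7\leqslant n\leqslant 11$ with $\rho=\frac{2}{n-2}$ and $\theta=\frac{n}{n-2}$, the nonlinearity $|u_m|^{\rho}|v_m|^{\rho}v_m$ lies only in $L^{\theta}(\Omega)$, so the term $\int_\Omega |u_m|^{\rho}|v_m|^{\rho}v_m\,w_j\,dx$ in the approximate equation requires $w_j\in L^{\theta'}(\Omega)$ with $\theta'=\frac{n}{2}$. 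Since $V\hookrightarrow L^{q}(\Omega)$ only for $q=\frac{2n}{n-2}$, and $\frac{n}{2}>\frac{2n}{n-2}$ precisely when $n>6$, a generic element of $V$ is \emph{not} in $L^{\theta'}(\Omega)$ in this range, and the approximate problem as you set it up is not well defined. (This is also why your parenthetical ``$\rho=2/(n-2)<2/(n-2)$'' reads as a slip: the relevant borderline is not the embedding $V\hookrightarrow L^{2(\rho+1)}(\Omega)$, which still holds with $2(\rho+1)=q$, but the failure of $V\hookrightarrow L^{\theta'}(\Omega)$.)

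The paper's fix is to take the basis in $V\cap L^{\theta'}(\Omega)$. This space is a separable Banach space because $W^{1,\theta'}_{\Gamma_0}(\Omega)$ is separable, is dense in $V$ and in $L^{\theta'}(\Omega)$, and embeds in both; Propositions \ref{p1} and \ref{p2} were proved exactly to justify this. With basis vectors $w_l\in V\cap L^{\theta'}(\Omega)$ the duality pairing with the nonlinearity makes sense, the a priori estimates go through verbatim (they only use the $V$- and $L^2$-structure), and the density of the span in $V$ still lets you pass to the limit in the variational formulation. You should add this basis construction as the first step of your proof; the rest of your argument then stands.
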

\begin{corollary}
	We obtain similar results to Theorem \ref{theorem2} for the case $\rho>0$ and $n=1,2$.
\end{corollary}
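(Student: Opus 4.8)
The plan is to run the entire argument of Theorem \ref{theorem2} without change and to verify that the sole ingredient tied to the range $7\leqslant n\leqslant 11$ transfers to $n=1,2$. In the proof of Theorem \ref{theorem2} the dimension and the pinned values $\rho=\frac{2}{n-2}$, $\theta=\frac{n}{n-2}$ enter only through the Sobolev embeddings $V\hookrightarrow L^{4\rho\theta}(\Omega)$ and $V\hookrightarrow L^{2\theta}(\Omega)$ (used to bound the coupled nonlinear terms in $L^{\theta}(\Omega)$), together with $V\hookrightarrow L^{2(\rho+1)}(\Omega)$ (used to define $c_0$ in \eqref{H1} and hence $N$, $L$, $\lambda^{*}$ in \eqref{Hc21}--\eqref{Nc}). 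First I would record the dimensional input: for $n=1,2$ the Sobolev embedding theorem gives $H^{1}(\Omega)\hookrightarrow L^{p}(\Omega)$ continuously for every $p\in[1,\infty)$ (indeed $H^{1}(\Omega)\hookrightarrow C(\overline{\Omega})$ when $n=1$), and with the Poincar\'e inequality on $V$ this yields $V\hookrightarrow L^{p}(\Omega)$ for every finite $p$. Under \eqref{n=1,2s} the exponents $4\rho\theta$, $2\theta$ and $2(\rho+1)$ are all finite for any $\rho>0$ and any $\theta\in(1,2)$ with $4\rho\theta\geqslant 1$, and each is $\geqslant 1$ (the role of the condition $4\rho\theta\geqslant1$ is precisely to make $L^{4\rho\theta}(\Omega)$ a legitimate Lebesgue space). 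Hence all three embeddings hold with finite constants, so $c_0$, $N$, $L$ and $\lambda^{*}$ are well defined exactly as before. This is the content of the corollary: in low dimension there is no upper bound on $\rho$ forced by the embedding, so the dimension-pinned values are replaced by the single open condition $4\rho\theta\geqslant 1$.

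With these embeddings secured, every step of Theorem \ref{theorem2} transcribes verbatim. I would (i) construct Galerkin approximations $u_m,v_m$ in the special basis adapted to the dissipative conditions on $\Gamma_1$; (ii) derive the a priori estimate by the modified-energy method of \cite{Carrier}, where the restriction \eqref{restriction} together with $V\hookrightarrow L^{2(\rho+1)}(\Omega)$ keeps $\|u_m(t)\|,\|v_m(t)\|$ inside the ball of radius $\lambda^{*}$ for all $t$, absorbing the indefinite coupling term and yielding uniform bounds for $u_m,v_m$ in $L^{\infty}(0,\infty;V)$, for $u_m',v_m'$ in $L^{\infty}(0,\infty;L^{2}(\Omega))$ and for $u_m',v_m'$ in $L^{2}_{loc}(0,\infty;L^{2}(\Gamma_1))$; (iii) extract weak-$*$ limits and pass to the limit, invoking the Aubin--Lions compactness lemma to identify the limits of the nonlinear products $|u_m|^{\rho}|v_m|^{\rho}v_m$ and $|u_m|^{\rho}u_m|v_m|^{\rho}$; and (iv) recover the boundary conditions $\frac{\partial u}{\partial\nu}+\delta u'=0$ and $\frac{\partial v}{\partial\nu}+\delta v'=0$ in $L^{2}_{loc}(0,\infty;L^{2}(\Gamma_1))$ via the trace results of Section 2. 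None of these steps sees the dimension beyond the embeddings already in hand.

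The only estimate worth re-checking explicitly is the one placing the nonlinearity in $L^{\theta}(\Omega)$. Since $|u|^{\rho}|v|^{\rho}v$ has modulus $|u|^{\rho}|v|^{\rho+1}$, I apply the generalized H\"older inequality with exponents $(4,4,2)$ to obtain
\[
\| |u|^{\rho}|v|^{\rho}v\|_{L^{\theta}(\Omega)}\leqslant \|u\|_{L^{4\rho\theta}(\Omega)}^{\rho}\,\|v\|_{L^{4\rho\theta}(\Omega)}^{\rho}\,\|v\|_{L^{2\theta}(\Omega)},
\]
and symmetrically for the second nonlinearity; the embeddings $V\hookrightarrow L^{4\rho\theta}(\Omega)$ and $V\hookrightarrow L^{2\theta}(\Omega)$, valid for $n=1,2$ as above, then close the bound uniformly in $m$. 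Here $\theta>1$ guarantees $\theta'<\infty$, so the $L^{\theta}$--$L^{\theta'}$ duality used to read the equations in $H^{-1}_{loc}(0,\infty;L^{\theta}(\Omega))$ is meaningful, while $4\rho\theta\geqslant 1$ is exactly the arithmetic compatibility that keeps the exponents admissible. I expect no genuine obstacle: the substance of the corollary is precisely that the embedding constraints pinning $\rho$ and $\theta$ in the range $7\leqslant n\leqslant 11$ evaporate in dimensions one and two, leaving only the mild condition $4\rho\theta\geqslant 1$, and the proof of Theorem \ref{theorem2} then applies word for word.
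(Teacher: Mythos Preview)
Your proposal is correct and follows the same approach as the paper, which offers no separate proof for this corollary and, for Theorem \ref{theorem2} itself, merely indicates that one takes a basis of $V\cap L^{\theta'}(\Omega)$ and repeats the argument of Theorem \ref{theorem1}. One small terminological slip: the phrase ``special basis adapted to the dissipative conditions on $\Gamma_1$'' describes the construction used in Theorem \ref{theorem3}, not in Theorems \ref{theorem1}--\ref{theorem2}; here an ordinary basis of $V\cap L^{\theta'}(\Omega)$ suffices, and since for $n=1,2$ one has $V\hookrightarrow L^{\theta'}(\Omega)$ (so $V\cap L^{\theta'}(\Omega)=V$), a basis of $V$ already does the job.
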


In order to obtain results on the uniqueness and decay of solutions of Problem \ref{int}, we prove the following theorem of the existence of solutions for the case $\rho=1$ and $n=1,2,3$. For the case $\rho>1$ and $n>3$, the regularity of the solution $u$ obtained below is an open problem.

\begin{remark}
	We observe that for $0 < \rho \leqslant \frac{1}{n-2}$ and from trace theorem, we have
	\[
	V \hookrightarrow H^{\frac{1}{2}}(\Gamma_1)\hookrightarrow L^{q_1}(\Gamma_1) \hookrightarrow L^{2(\rho+1)}(\Gamma_1),
	\]
	where $q_1=\frac{2(n-1)}{n-2}$ for $n\geqslant 3$. In particular for $\rho =1$ and $n=3$, we obtain
	\[
	V \hookrightarrow H^{\frac{1}{2}}(\Gamma_1)\hookrightarrow L^{4}(\Gamma_1) \hookrightarrow L^{2}(\Gamma_1).
	\]
	Thus there exists positive constants $c_2$ and $c_3$ such that
	\begin{equation}\label{gamma}
	\|w\|_{L^{4}(\Gamma_1)} \leqslant c_2 \|w\|, \quad \mbox{and} \quad \|w\|_{L^{2}(\Gamma_1)} \leqslant c_3 \|w\|,~ \forall w \in V .
	\end{equation}
\end{remark}  
We also consider the following hypothesis: 
\\
Introduce the following restrictions on the initial data and some constants:
\begin{equation}\label{rest}
\|u_0\|, ~\|v_0\| < \lambda_1^{*} \quad \text{and} \quad L _1< {\frac14} (\lambda_1^{*})^2,
\end{equation}
where
\begin{equation}\label{H21}
	\lambda_1^{*} =\left( \dfrac{1}{4N_1}\right)^{\frac{1}{2}};
\end{equation}

\begin{equation}\label{H22}
	L_1=\dfrac{1}{2}\left[|u_1|^2 + |v_1|^2\right]+ \dfrac{1}{2}\left[\|u_0\|^2 + \|v_0\|^2\right]+ N_1\left[\|u_0\|^{4} + \|v_0\|^{4}\right];
\end{equation}

\begin{equation}\label{N}
	N_1=\frac{c_1^{4}}{2} \left[n+\frac{1}{4}\right]+ \frac{Rc_2^4}{2}+c_1^{4}(n-1);
\end{equation}


\begin{theorem}\label{theorem3}
	Let $\rho=1$ and $n\leqslant3$. Consider \eqref{rest}--\eqref{N} and $u^0, v^0 \in V \cap H^2(\Omega)$ and $u^1, v^1 \in V$ satisfying
	\[
	\begin{split}
	&\frac{\partial u^0}{\partial \nu} + \delta(x)u^1=0 \quad \mbox{on} \quad \Gamma_1 \\
	&\frac{\partial v^0}{\partial \nu} + \delta(x)v^1=0 \quad \mbox{on} \quad \Gamma_1.
	\end{split}
	\]
	Then there exists unique functions $u,v $ in the class
	\begin{equation}\label{class3}
	\left| \begin{array}{l}
	u, v \in L^{\infty}(0,\infty; V\cap H^2(\Omega)), ~ u', v' \in L^{\infty}_{loc}(0,\infty; V)
	\\
	u'', v'' \in L_{loc}^{\infty}(0,\infty; L^2(\Omega))
	\\
	u', v' \in L^{\infty}(0,\infty; L^2(\Gamma_1)), \quad u'', v'' \in L_{loc}^{\infty}(0,\infty; L^2(\Gamma_1)) 
	\end{array}
	\right.
	\end{equation}
	such that $u$ and $v$ satisfies the equations
	\begin{equation*}\label{eq3}
	\begin{split}
	&u'' - \Delta u + |u||v|v =0 \quad \text{in} \quad  L^{\infty}_{loc}(0,\infty;L^{2}(\Omega))\\ 
	&v'' - \Delta v + |u|u|v| =0 \quad \text{in} \quad L_{loc}^{\infty}(0,\infty; L^{2}(\Omega))\\
	&\frac{\partial u}{\partial \nu} + \delta(x)u'=0 \quad \mbox{in} \quad L^{\infty}_{loc}(0,\infty; H^{\frac{1}{2}}(\Gamma_1))	\\
	&\frac{\partial v}{\partial \nu} + \delta(x)v'=0 \quad \mbox{in} \quad  L^{\infty}_{loc}(0,\infty; H^{\frac{1}{2}}(\Gamma_1)).
	\end{split}
	\end{equation*}
	and the initial conditions
	\[
	u(0)=u^0, v(0)=v^0 \quad u'(0)=u^1 ,~v'(0)=v^1
	\]
\end{theorem}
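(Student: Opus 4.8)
The plan is to follow the scheme announced in the introduction: construct approximate solutions by the Galerkin method in a special basis adapted to the dissipative boundary conditions, prove a first (energy) and a second (differentiated) a priori estimate --- global in time for the lower order norms thanks to the smallness hypotheses \eqref{rest}--\eqref{N} --- pass to the limit by compactness, recover the $H^2$ regularity from elliptic theory, and finally obtain uniqueness by an energy estimate for the difference of two solutions. Concretely, I would fix a special basis $\{w_1,w_2,\dots\}\subset V\cap H^2(\Omega)$ adapted to the dissipative boundary condition as in \cite{Carrier}, put $V_m=\operatorname{span}\{w_1,\dots,w_m\}$, choose $u_m(0)\to u^0$, $v_m(0)\to v^0$ and $u_m'(0)\to u^1$, $v_m'(0)\to v^1$ in $V$, with $u_m(0),v_m(0)$ also bounded in $H^2(\Omega)$, and look for $u_m(t)=\sum_{j=1}^m g_{jm}(t)w_j$, $v_m(t)=\sum_{j=1}^m h_{jm}(t)w_j$ solving the variational problem obtained by testing \eqref{int} with each $w_j$ and keeping the boundary term $\int_{\Gamma_1}\delta\,u_m'\,w_j\,d\Gamma$. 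Since $\rho=1$ the map $s\mapsto|s|s$ is $C^1$, so Carath\'eodory's theorem yields a local solution on some $[0,t_m)$, and the compatibility conditions on $(u^0,u^1)$, $(v^0,v^1)$ are used precisely so that, through the special basis, $\|u_m''(0)\|_{L^2}$ and $\|v_m''(0)\|_{L^2}$ are bounded uniformly in $m$.

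\emph{First estimate.} Testing with $u_m'$ and $v_m'$ and adding, the coupling terms combine into a total derivative, $(|u_m||v_m|v_m,u_m')+(|u_m|u_m|v_m|,v_m')=\tfrac12\tfrac{d}{dt}\int_\Omega(|u_m|u_m)(|v_m|v_m)\,dx$, so one obtains the energy identity $\tfrac{d}{dt}E_m(t)+\int_{\Gamma_1}\delta(|u_m'|^2+|v_m'|^2)\,d\Gamma=0$. Since $E_m$ is not sign-definite, I would use the device of \cite{Carrier} (inspired by an idea of Tartar \cite{LT}): while $\|u_m(t)\|,\|v_m(t)\|<\lambda_1^{*}$ one bounds the coupling term by $\big|\tfrac12\int_\Omega(|u_m|u_m)(|v_m|v_m)\,dx\big|\le c_1^4\|u_m\|^2\|v_m\|^2\le N_1(\|u_m\|^4+\|v_m\|^4)\le\tfrac14(\|u_m\|^2+\|v_m\|^2)$, whence $E_m(t)\ge\tfrac12(|u_m'|^2+|v_m'|^2)+\tfrac14(\|u_m\|^2+\|v_m\|^2)$; together with $E_m(0)\le L_1<\tfrac14(\lambda_1^{*})^2$ this forces $\|u_m(t)\|^2+\|v_m(t)\|^2<(\lambda_1^{*})^2$. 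A continuity argument --- the set of $t$ for which this strict inequality holds on all of $[0,t]$ is nonempty, open and closed in $[0,\infty)$ --- then shows $t_m=\infty$ and that $u_m,v_m$ are bounded in $L^\infty(0,\infty;V)$, $u_m',v_m'$ in $L^\infty(0,\infty;L^2(\Omega))$, and $\int_0^\infty\int_{\Gamma_1}\delta(|u_m'|^2+|v_m'|^2)\,d\Gamma\,dt$ is bounded.

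\emph{Second estimate and passage to the limit.} I would differentiate the approximate equations in $t$ (legitimate for $\rho=1$, treating the points where $|s|s$ is only $C^1$ by difference quotients and the mean value theorem), test with $u_m''$ and $v_m''$ and add: the term $\int_{\Gamma_1}\delta(|u_m''|^2+|v_m''|^2)\,d\Gamma$ has the good sign, while the forcing is controlled through $\tfrac{d}{dt}(|u_m||v_m|v_m)=\operatorname{sgn}(u_m)\,u_m'\,|v_m|v_m+2|u_m||v_m|v_m'$, H\"older's inequality, the embedding $V\hookrightarrow L^6(\Omega)$ and the trace inequality $\|w\|_{L^4(\Gamma_1)}\le c_2\|w\|$ --- all available because $n\le3$ --- typically as in $\big|(u_m'|v_m|^2,u_m'')\big|\le\|u_m'\|_{L^6}\|v_m\|_{L^6}^2\|u_m''\|_{L^2}\le C\|u_m'\|_V\|u_m''\|_{L^2}$, the factor $\|v_m\|_V^2$ being already bounded by the first estimate. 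Using the bound on $\|u_m''(0)\|_{L^2}$ and Gronwall's inequality, one gets on each $[0,T]$ that $u_m',v_m'$ are bounded in $L^\infty(0,T;V)$, $u_m'',v_m''$ in $L^\infty(0,T;L^2(\Omega))$, and, from the boundary conditions and the trace theorem, $u_m',v_m'$ in $L^\infty(0,T;L^2(\Gamma_1))$ and $u_m'',v_m''$ in $L^\infty(0,T;L^2(\Gamma_1))$. Passing to a subsequence, $u_m\rightharpoonup u$, $v_m\rightharpoonup v$ weakly-$*$ in these spaces; Aubin--Lions gives $u_m\to u$, $v_m\to v$ strongly in $C([0,T];V)\hookrightarrow C([0,T];L^6(\Omega))$, hence $|u_m||v_m|v_m\to|u||v|v$ in $C([0,T];L^2(\Omega))$, and the traces $u_m'\to u'$ in $L^2(0,T;L^2(\Gamma_1))$, which suffices to pass to the limit in the variational identity and obtain a solution of \eqref{int}. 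Then $\Delta u=u''+|u||v|v\in L^\infty_{loc}(0,\infty;L^2(\Omega))$, and since $u=0$ on $\Gamma_0$ and $\partial_\nu u=-\delta u'\in L^\infty_{loc}(0,\infty;H^{1/2}(\Gamma_1))$ (as $\delta\in W^{1,\infty}(\Gamma_1)$ and $u'\in L^\infty_{loc}(0,\infty;V)$), elliptic regularity for the mixed problem ($\Gamma$ of class $C^2$, $\overline{\Gamma}_0\cap\overline{\Gamma}_1=\emptyset$) gives the $H^2$ regularity and the remaining assertions of \eqref{class3}; the initial data pass to the limit by the strong convergences.

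\emph{Uniqueness, and the main obstacle.} If $(u,v)$ and $(\hat u,\hat v)$ are two solutions in the class \eqref{class3}, put $w=u-\hat u$, $z=v-\hat v$; then $(w,z)$ solves the same system with zero initial data, the dissipative boundary conditions, and right-hand side the differences of the nonlinear terms. Testing with $w'$ and $z'$ and integrating over $\Omega\times(0,t)$, the boundary contribution $\int_0^t\int_{\Gamma_1}\delta(|w'|^2+|z'|^2)$ is nonnegative, and using the mean value theorem applied to the Nemytskii map (as in the uniqueness discussion of the introduction) one writes $|u||v|v-|\hat u||\hat v|\hat v=(|u|-|\hat u|)|v|v+|\hat u|(|v|v-|\hat v|\hat v)$ with $\big||u|-|\hat u|\big|\le|w|$ and $\big||v|v-|\hat v|\hat v\big|\le C(|v|+|\hat v|)|z|$; H\"older's inequality together with $V\hookrightarrow L^6(\Omega)$ (valid since $n\le3$) then bounds the nonlinear contribution by $C(\|w\|^2+\|z\|^2+|w'|^2+|z'|^2)$, the $L^6$ norms of $u,\hat u,v,\hat v$ being bounded by \eqref{class3}, and Gronwall's inequality forces $w\equiv z\equiv0$. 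The main obstacle is not the compactness machinery but, first, making the a priori estimates global in time despite the coupling term of indefinite sign --- which is exactly why the smallness hypotheses \eqref{rest}--\eqref{N} and the continuity argument are needed --- and, second, the second a priori estimate, i.e. controlling the $t$-derivative of the cubic nonlinearities in $L^\infty(0,T;L^2)$, which is what forces $\rho=1$, $n\le3$ and requires the compatibility conditions on the data to bound $u_m''(0)$ and $v_m''(0)$.
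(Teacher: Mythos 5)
Your proposal follows essentially the same route as the paper's proof: Galerkin approximation in a special basis of $V\cap H^2(\Omega)$, the Tartar--Carrier smallness device (the function $J_1(\lambda)=\tfrac14\lambda^2-N_1\lambda^4$ together with a continuity/contradiction argument) for the global first estimate, a differentiated second estimate whose starting point $|u_m''(0)|$ is controlled through the compatibility conditions, compactness plus elliptic regularity for the mixed problem to recover the $H^2$ class, and the energy method with the splitting $|u||v|v-|\hat u||\hat v|\hat v=(|u|-|\hat u|)|v|v+|\hat u|(|v|v-|\hat v|\hat v)$ and $V\hookrightarrow L^6(\Omega)$ for uniqueness. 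The two places where the paper is more careful than your sketch are (i) the initial data are first approximated by pairs in $V\cap H^2(\Omega)$ satisfying the compatibility condition \emph{exactly} (Proposition \ref{propos1}), and for each such pair a basis is chosen containing $u_k^0,v_k^0,u_k^1,v_k^1$ among its first four vectors, so the discrete initial data coincide with these approximants and the Green's-formula cancellation at $t=0$ is literal rather than approximate, which is what makes $|u_{km}''(0)|$ bounded; and (ii) the nonlinear limit is identified via a.e.\ convergence, uniform bounds and Lions' lemma, not via the strong convergence $u_m\to u$ in $C([0,T];V)$ you assert, which does not follow from Aubin--Lions under the stated bounds (no uniform $H^2$ bound on $u_m$ is available at the approximate level) --- though the weaker argument suffices.
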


\begin{corollary}\label{corollary3}
	We obtain similar results to Theorem \ref{theorem3} for the case $\rho>1$ and $n=1,2$.
\end{corollary}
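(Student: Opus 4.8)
\emph{Proof proposal.} Corollary~\ref{corollary3} extends the conclusion of Theorem~\ref{theorem3} to all $\rho>1$ in dimensions $n=1,2$, so the plan is to run the Faedo--Galerkin argument that establishes Theorem~\ref{theorem3} and then to check that every Sobolev embedding it uses survives for $\rho>1$ when $n\le2$. I would begin the Galerkin scheme with a special basis $\{w_j\}_{j\ge1}$ of $V\cap H^2(\Omega)$ adapted to the dissipative boundary conditions (in the spirit of \cite{Carrier}), whose completeness and regularity rest on the density, separability and trace results of Section~2. Setting $V_m=[w_1,\dots,w_m]$, I look for $u_m(t)=\sum_{j\le m}g_{jm}(t)w_j$, $v_m(t)=\sum_{j\le m}h_{jm}(t)w_j$ solving
\begin{equation*}
(u_m'',w_j)+((u_m,w_j))+\int_{\Gamma_1}\delta\,u_m'\,w_j\,d\Gamma+\big(|u_m|^{\rho}|v_m|^{\rho}v_m,\,w_j\big)=0,\qquad 1\le j\le m,
\end{equation*}
the twin equation for $v_m$, and approximate data $u^{0m}\to u^0$, $v^{0m}\to v^0$ in $V\cap H^2(\Omega)$, $u^{1m}\to u^1$, $v^{1m}\to v^1$ in $V$ chosen so that the compatibility relations $\partial_\nu u^{0m}+\delta u^{1m}=0$, $\partial_\nu v^{0m}+\delta v^{1m}=0$ on $\Gamma_1$ are respected; that these must coexist with $u^{1m},v^{1m}\in V_m$ is exactly why a special basis is needed. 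Carath\'eodory's theorem furnishes a solution on a maximal interval $[0,t_m)$.

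For the first a priori estimate I would test with $g_{jm}'$ and $h_{jm}'$ and add: the coupling terms assemble into $\tfrac{1}{\rho+1}\tfrac{d}{dt}\int_\Omega(|u_m|^{\rho}u_m)(|v_m|^{\rho}v_m)\,dx$, so the approximate energy $E_m$ obeys $E_m'(t)=-\int_{\Gamma_1}\delta(|u_m'|^2+|v_m'|^2)\,d\Gamma\le0$. Bounding the indefinite term by $N(\|u_m\|^{2(\rho+1)}+\|v_m\|^{2(\rho+1)})$ via $V\hookrightarrow L^{2(\rho+1)}(\Omega)$, and coupling with the second-order and boundary estimates below, one arrives at a differential inequality of the form $\tfrac{1}{2}\Psi_m(t)\le L_1+N_1\,\Psi_m(t)^{\rho+1}$ for a functional $\Psi_m$ dominating those norms of \eqref{class3} that are declared global on $(0,\infty)$. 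The continuation device of \cite{Carrier} (inspired by \cite{LT}), already used for Theorem~\ref{theorem1}, then prevents $\Psi_m$ from ever reaching the threshold $(\lambda_1^{*})^2$, since first contact would force $\tfrac{1}{4}(\lambda_1^{*})^2\le L_1$ (using $N_1((\lambda_1^{*})^2)^{\rho}=\tfrac{1}{4}$ from \eqref{H21}), contradicting \eqref{rest}; hence $t_m=+\infty$, the bounds are uniform in $m$ and $t$, and $\int_0^\infty\!\int_{\Gamma_1}\delta(|u_m'|^2+|v_m'|^2)\,d\Gamma\,dt<\infty$.

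Next I would differentiate the approximate system in $t$ and test with $u_m''$, $v_m''$: the new boundary term $\int_{\Gamma_1}\delta(|u_m''|^2+|v_m''|^2)\,d\Gamma$ is dissipative, $u_m''(0)$, $v_m''(0)$ stay bounded in $L^2(\Omega)$ by the compatibility relations and the special basis, and the $t$-derivatives of the nonlinear terms lie in $L^2(\Omega)$ (for $\rho=1$, $n\le3$ one uses $V\hookrightarrow L^6(\Omega)$ and $H^2(\Omega)\hookrightarrow L^\infty(\Omega)$), so Gronwall on $[0,T]$ bounds $u_m',v_m'$ in $L^\infty(0,T;V)$ and $u_m'',v_m''$ in $L^\infty(0,T;L^2(\Omega))$. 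Feeding this into the equation, together with elliptic regularity for the mixed problem (legitimate because $\overline{\Gamma}_0\cap\overline{\Gamma}_1=\emptyset$) and the differentiated boundary relation $\delta u''=-\partial_\nu u'$ on $\Gamma_1$, places $u_m,v_m$ in $H^2(\Omega)$ and $u_m'',v_m''$ in $L^2(\Gamma_1)$, with the $H^2$-bound and the $L^2(\Gamma_1)$-bound on $u_m',v_m'$ uniform on $[0,\infty)$. I would then pass to the limit: weak-$*$ limits $u,v$ in each space of \eqref{class3}, Aubin--Lions compactness giving $u_m\to u$, $v_m\to v$ in $C([0,T];H^s(\Omega))$ for $s<2$ --- hence in $C([0,T];L^\infty(\Omega))$ when $n\le3$ --- which with the uniform bounds identifies the limits of $|u_m|^{\rho}|v_m|^{\rho}v_m$ and $|u_m|^{\rho}u_m|v_m|^{\rho}$, while the boundary conditions come from $u_m'\to u'$ in $L^2(\Gamma_1\times(0,T))$ and the trace lemmas of Section~2 and the initial data from weak convergence in $C([0,T];\cdot)$. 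Uniqueness follows by the energy method: for two solutions $(u_1,v_1),(u_2,v_2)$ put $w=u_1-u_2$, $z=v_1-v_2$, write the energy identity for $(w,z)$ with the dissipative boundary contribution on the favourable side, estimate the difference of the nonlinearities by the Mean Value Theorem --- this is where $\rho=1$, $n\le3$ enters, since that difference is differentiated in $t$ and the $H^2/L^\infty$ regularity is needed to make the bound rigorous --- and invoke Gronwall to conclude $w\equiv z\equiv0$.

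Finally, for the corollary itself: when $n=1,2$ one has $V\hookrightarrow L^p(\Omega)$ for every $p<\infty$, $V\hookrightarrow C(\overline{\Omega})$ if $n=1$, and $H^2(\Omega)\hookrightarrow C(\overline{\Omega})$, so every nonlinear, trace and $t$-differentiated estimate above holds for \emph{any} $\rho>1$, with room to spare, and the continuation argument still closes under \eqref{rest}; the only dimension-sensitive ingredients of Theorem~\ref{theorem3} ($H^2\hookrightarrow L^\infty$ and the Sobolev borderline at $n=3$) simply stop being constraints, and the conclusion transfers verbatim. The part I expect to be hardest is the higher-order estimate: producing a basis that is dense in $V\cap H^2(\Omega)$ and at the same time compatible with $\partial_\nu u^{0m}+\delta u^{1m}=0$ so that $u_m''(0)$, $v_m''(0)$ remain bounded in $L^2(\Omega)$, and securing the global-in-time bounds for $u,v$ in $H^2(\Omega)$ and for $u',v'$ in $L^2(\Gamma_1)$ --- as against the merely local bounds for $u',v'$ in $V$ and $u'',v''$ in $L^2(\Omega)$ --- which is precisely what the sharpened restriction \eqref{rest} and the constant \eqref{N} are designed to allow.
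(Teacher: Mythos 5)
Your proposal is correct and follows essentially the route the paper intends: the paper gives no separate argument for this corollary, the point being precisely that the Faedo--Galerkin scheme with the special basis of $V\cap H^2(\Omega)$, the two a priori estimates, the continuation lemma, the passage to the limit via elliptic regularity, and the energy-method uniqueness all carry over verbatim because for $n=1,2$ the embeddings $V\hookrightarrow L^{p}(\Omega)$ ($p<\infty$) and $H^{2}(\Omega)\hookrightarrow C(\overline{\Omega})$ remove every restriction on $\rho>1$. Your closing caveat about the global-versus-local character of the $H^{2}$ and boundary bounds reflects a tension already present in the paper's own statement and proof of Theorem \ref{theorem3}, not a defect of your argument.
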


Next we state the result on the decay of solutions of Theorem \ref{theorem3}. We assume that there exists a point $x^0 \in \mathbb{R}^n$, such that
\[
\Gamma_0=\{x\in \Gamma ; m(x) \cdot \nu(x) \leqslant 0\} \quad \mbox{and} \quad \Gamma_1=\{x\in \Gamma ; m(x) \cdot \nu(x) > 
0\}.
\]
where $m(x)=x-x^0, ~ x\in\mathbb{R}^n$, see Figure 2
\begin{figure}[!h]
\begin{center}
    \includegraphics[scale=.7]{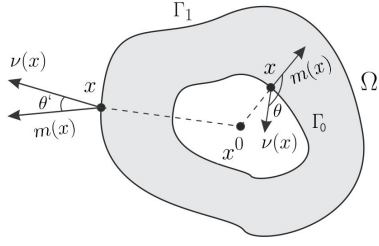}
\end{center}
\caption{The sets $\Gamma_0$ and $\Gamma_1$.} 
\end{figure}

In this section we consider $\delta(x)=m(x)\cdot\nu(x)$ and $R=\displaystyle\max_{x \in \overline{\Omega}} \|m(x)\|$.
The energy of system $\eqref{int}$ with $\rho=1$ is given by
\begin{equation*}\label{E}
E(t)=\frac{1}{2}\{|u'(t)|^2 + |v'(t)|^2  + \|u(t)\|^2 + \|v(t)\|^2 \} 
+ \frac{1}{2}\int_{\Omega} (|u(t)|u(t))(|v(t)|v(t))dx,~ t \in [0, \infty).
\end{equation*}

We have the following result:
\begin{theorem}\label{teo2.2}
	Let $\{u,v\}$ be the solution obtained in Theorem \ref{theorem3}. Then 
	\begin{equation}\label{EE22}
	E(t) \leqslant 3E(0)e^{-\frac{\tau}{3} t}, \quad \forall ~t \in[0,\infty)
	\end{equation}
	where
	\begin{equation}\label{const}
	\begin{split}
	&\tau=\min\left\{\frac{1}{2P},\frac{m_0}{D}\right\}>0; \\
	& P=4\left(2R+ \frac{n-1}{2}+\frac{n-1}{2\lambda_1}\right); \\
	&D=R^3+R+R^2(n-1)^2c_3^2;\\
	& m_0=\min\{m(x) \cdot \nu(x) ; x \in \Gamma_1\}>0.
	\end{split}
	\end{equation}
\end{theorem}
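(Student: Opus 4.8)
The plan is to establish the exponential decay \eqref{EE22} by the standard multiplier technique adapted to the dissipative boundary condition on $\Gamma_1$. First I would verify that the energy $E(t)$ is non-increasing: multiplying the first equation by $u'$, the second by $v'$, integrating over $\Omega$, using Green's formula together with the boundary conditions $\partial u/\partial\nu=-\delta u'$, $\partial v/\partial\nu=-\delta v'$ on $\Gamma_1$, and observing that the two coupled nonlinear terms combine into $\frac{d}{dt}\left[\frac{1}{2}\int_\Omega(|u|u)(|v|v)\,dx\right]$ (this is the point of the Tartar-type trick that replaced the sign-indefinite potential well), one gets
\begin{equation*}
E'(t)=-\int_{\Gamma_1}\delta(x)\bigl(|u'(t)|^2+|v'(t)|^2\bigr)\,d\Gamma\le -\delta_0\int_{\Gamma_1}\bigl(|u'(t)|^2+|v'(t)|^2\bigr)\,d\Gamma\le 0,
\end{equation*}
where here $\delta(x)=m(x)\cdot\nu(x)\ge m_0>0$ on $\Gamma_1$. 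Note also that the restrictions \eqref{rest}--\eqref{N} on the initial data, through the a priori estimates of Theorem \ref{theorem3}, guarantee that $\|u(t)\|,\|v(t)\|<\lambda_1^*$ for all $t$, so that the nonlinear term in $E(t)$ is controlled and $E(t)$ stays equivalent to $|u'|^2+|v'|^2+\|u\|^2+\|v\|^2$; in particular $E(t)\ge 0$ can be arranged, and $3E(0)$ on the right of \eqref{EE22} absorbs the nonlinear contribution.

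Next I would introduce the perturbed energy $E_\varepsilon(t)=E(t)+\varepsilon\psi(t)$ with a multiplier functional of the form
\begin{equation*}
\psi(t)=2\int_\Omega u'(t)\,(m\cdot\nabla u(t))\,dx+2\int_\Omega v'(t)\,(m\cdot\nabla v(t))\,dx+(n-1)\int_\Omega\bigl(u'(t)u(t)+v'(t)v(t)\bigr)\,dx,
\end{equation*}
following Komornik--Zuazua \cite{Zuazua} and Komornik \cite{Komornik}. The coefficient $n-1$ is chosen so that the $\int_\Omega u'(m\cdot\nabla u)$ term, after differentiation, produces the bulk quantities with the right balance. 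Using $|\psi(t)|\le P\,E(t)$ with $P$ as in \eqref{const} (this bound uses Cauchy--Schwarz, $\|m\|\le R$ on $\overline\Omega$, and Poincaré/the embedding constants), we get $(1-\varepsilon P)E(t)\le E_\varepsilon(t)\le(1+\varepsilon P)E(t)$, so $E_\varepsilon$ and $E$ are equivalent for $\varepsilon$ small.

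The core computation is to differentiate $\psi(t)$ along solutions. Replacing $u''$ by $\Delta u-|u||v|v$ and $v''$ by $\Delta v-|u|u|v|$, integrating by parts, and using the classical Rellich-type identities for $\int_\Omega(m\cdot\nabla u)\Delta u$ (which generate the boundary term $\frac12\int_\Gamma(m\cdot\nu)|\nabla u|^2\,d\Gamma$ minus interior terms) one obtains
\begin{equation*}
\psi'(t)\le -2E(t)+\text{(boundary terms on }\Gamma_1\text{)}+\text{(lower-order nonlinear terms)}.
\end{equation*}
On $\Gamma_0$, where $u=v=0$, the tangential gradient vanishes and $m\cdot\nu\le 0$, so those boundary contributions have a favorable sign and are discarded. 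On $\Gamma_1$ the remaining boundary terms involve $\int_{\Gamma_1}(m\cdot\nu)\bigl(|u'|^2+|v'|^2+|\nabla u|^2+\cdots\bigr)$; using the boundary condition to write $\partial u/\partial\nu=-\delta u'=-(m\cdot\nu)u'$ and the splitting of $\nabla u$ into normal and tangential parts, together with the trace inequality \eqref{gamma} to absorb the tangential traces, these are all dominated by $D\int_{\Gamma_1}(|u'|^2+|v'|^2)\,d\Gamma=-\frac{D}{\delta_0\text{ or }m_0}E'(t)$ up to a constant, with $D$ as in \eqref{const}. Combining, $E_\varepsilon'(t)=E'(t)+\varepsilon\psi'(t)\le -2\varepsilon E(t)+(\text{const}-C\varepsilon)E'(t)\cdot(-1)$-type terms; choosing $\varepsilon$ so that $\varepsilon\le m_0/D$ and $\varepsilon P\le 1/2$ makes the $E'$ contribution absorbable and yields $E_\varepsilon'(t)\le -\varepsilon E(t)\le -\frac{\varepsilon}{1+\varepsilon P}E_\varepsilon(t)$. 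With $\varepsilon=\tau/2$ and $\tau=\min\{1/(2P),m_0/D\}$ one gets $E_\varepsilon(t)\le E_\varepsilon(0)e^{-(\tau/3)t}$, and unwinding the equivalence $\frac12 E(t)\le E_\varepsilon(t)\le\frac32 E(t)$ gives $E(t)\le 3E(0)e^{-(\tau/3)t}$.

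The main obstacle I anticipate is the careful treatment of the boundary terms on $\Gamma_1$: one must split $\nabla u=\frac{\partial u}{\partial\nu}\nu+\nabla_T u$, substitute the dissipative condition, and then control the tangential derivative traces $\nabla_T u$ on $\Gamma_1$ — for this the regularity $u,v\in L^\infty(0,\infty;V\cap H^2(\Omega))$ from Theorem \ref{theorem3} and the trace embedding \eqref{gamma} are essential, and getting the constant to come out as exactly $D=R^3+R+R^2(n-1)^2c_3^2$ requires bookkeeping the powers of $R$ from $\|m\|\le R$ appearing in each Rellich term. A secondary technical point is justifying all these integrations by parts and the differentiation of $\psi$, which is legitimate only because the solution is the strong one of Theorem \ref{theorem3}; the estimate for general weak solutions would then follow by density, but here it suffices to argue directly on the strong solution.
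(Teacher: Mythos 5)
Your proposal is correct and follows essentially the same route as the paper: the dissipation identity $E'\le -m_0\int_{\Gamma_1}(|u'|^2+|v'|^2)\,d\Gamma$, the Komornik--Zuazua perturbed energy $E_\varepsilon=E+\varepsilon\psi$ with the same multiplier $\psi$, the equivalence $\frac12E\le E_\varepsilon\le\frac32E$ via $|\psi|\le PE$ (which in turn uses the sign condition $\frac14\lambda^2-N_1\lambda^4\ge0$ guaranteed by the smallness of the data), the Rellich identity plus boundary-term bookkeeping giving $\psi'\le -cE+D\int_{\Gamma_1}(|u'|^2+|v'|^2)\,d\Gamma$, and the final choice $\tau=\min\{1/(2P),m_0/D\}$. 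The only cosmetic difference is that the paper carries out the computation on the approximations $u_k,v_k$ (which have the $H^2$ regularity needed for Rellich's identity) and passes to the limit in $k$ at the end, whereas you argue directly on the strong solution of Theorem \ref{theorem3}, which enjoys the same regularity, so both are legitimate.
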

\subsection{Separability}
Now consider $X, Y$ and $W$ be theree Banach spaces such that $W\hookrightarrow X$ and $W\hookrightarrow Y$. Let $Z$ be a topological vector space that separates points, such that $X\hookrightarrow Z$ and $Y\hookrightarrow Z$. Then the space $E=X \cap Y$ provided with the norm
\[
\|u\|_{E}=\|u\|_{X}+\|u\|_{Y}
\]
is a Banach space.
\begin{proposition}\label{p1}
	If $W$ is dense in $X$ and dense in $Y$ then $W$ is dense $E$.	
\end{proposition}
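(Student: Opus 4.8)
\textbf{Proof plan for Proposition \ref{p1}.}
The plan is to show that an arbitrary element $u \in E = X \cap Y$ can be approximated in the norm $\|\cdot\|_E = \|\cdot\|_X + \|\cdot\|_Y$ by a sequence from $W$. First I would observe that, since $W$ is dense in $X$, there is a sequence $(w_k) \subset W$ with $w_k \to u$ in $X$; similarly, using density of $W$ in $Y$, a sequence $(z_k) \subset W$ with $z_k \to u$ in $Y$. The naive difficulty is that these two sequences need not coincide, so convergence in $X$ and convergence in $Y$ have been obtained along different approximating sequences. The whole point of the hypotheses $W \hookrightarrow X$, $W \hookrightarrow Y$, $X \hookrightarrow Z$, $Y \hookrightarrow Z$ with $Z$ Hausdorff is to let us reconcile the two.

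The key step is a diagonal/uniqueness-of-limits argument in $Z$. Fix $u \in E$. For each $k$, using density of $W$ in $X$ choose $w_k \in W$ with $\|w_k - u\|_X < 1/k$, and using density of $W$ in $Y$ choose $\tilde w_k \in W$ with $\|\tilde w_k - u\|_Y < 1/k$. Since $X \hookrightarrow Z$ and $Y \hookrightarrow Z$ continuously, $w_k \to u$ in $Z$ and $\tilde w_k \to u$ in $Z$; as $Z$ separates points, limits in $Z$ are unique, so both sequences have the same $Z$-limit, namely $u$. This is the standard way one sees that the space $E$ with the graph-type norm is well defined and that its elements are ``genuinely'' common elements of $X$ and $Y$ sitting inside $Z$. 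However, this alone does not yet produce a single sequence in $W$ converging to $u$ in $E$: one still needs $w_k$ to converge to $u$ simultaneously in $X$ and in $Y$.

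The honest obstacle is exactly this last point, and I expect the resolution to require the further structural fact that $W$, with its own norm, embeds continuously into both $X$ and $Y$, together with density in each. The argument I would run is: pick $(w_k) \subset W$ with $w_k \to u$ in $X$. I claim $(w_k)$ is then already the desired sequence, provided we also know $(w_k)$ is Cauchy, or convergent, in $Y$. To get this, approximate instead inside $W$: since $W \hookrightarrow X$ densely, but we actually want to exploit that the \emph{same} $W$-elements control both norms. Concretely, choose $w_k \in W$ with $\|w_k - u\|_X \to 0$; choose $\tilde w_k \in W$ with $\|\tilde w_k - u\|_Y \to 0$; then $w_k - \tilde w_k \to 0$ in $Z$ (both tend to $u$ in $Z$), and since $w_k - \tilde w_k \in W$ and $W \hookrightarrow Z$ is injective with $W$'s topology finer than $Z$'s on $W$, one deduces—via the continuity of the embeddings and a closed-graph type argument—that $w_k - \tilde w_k \to 0$ in $W$, hence in both $X$ and $Y$. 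Therefore $w_k \to u$ in $Y$ as well, and $\|w_k - u\|_E = \|w_k - u\|_X + \|w_k - u\|_Y \to 0$, giving $w_k \to u$ in $E$ and proving $W$ dense in $E$.

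The step I expect to be the crux, and the one needing care, is passing from ``$w_k - \tilde w_k \to 0$ in $Z$ with $w_k - \tilde w_k \in W$'' to ``$w_k - \tilde w_k \to 0$ in $W$ (or at least in $X$ and $Y$)'': this is where the Hausdorff separation of $Z$ and the compatibility of all the embeddings must be used, essentially to identify $E$ with the closure-theoretic intersection and to rule out the sequences drifting apart. If in the intended setting the embeddings are all inclusions of subspaces of one fixed ambient $Z$ (which is the situation in the applications, where $Z$ is a space of distributions), this identification is immediate and the argument collapses to the two-line diagonal estimate $\|w_k - u\|_E \to 0$. I would therefore present it in that concrete form, noting the abstract version requires the stated hypotheses precisely to make that identification legitimate. $\eproof$
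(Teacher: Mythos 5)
Your argument breaks down at exactly the step you flag as the crux, and the gap is not repairable with the stated hypotheses. You choose $w_k\in W$ with $\|w_k-u\|_X\to 0$ and $\tilde w_k\in W$ with $\|\tilde w_k-u\|_Y\to 0$, observe that $w_k-\tilde w_k\to 0$ in $Z$, and then want to conclude that $w_k-\tilde w_k\to 0$ in $W$ (or at least in $X$ and $Y$). This is an attempt to upgrade convergence in a \emph{coarser} topology to convergence in a \emph{finer} one, which a continuous embedding never provides: $W\hookrightarrow Z$ says that $W$-convergence implies $Z$-convergence, and the converse would force the two topologies to agree on $W$. A closed-graph argument does not rescue this, because the identity map from $(W,\|\cdot\|_Z)$ to $(W,\|\cdot\|_W)$ has as its domain a non-complete normed space, so the closed graph theorem is not applicable. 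A concrete failure: take $W=\mathcal{D}(\mathbb{R})$, $Z=\mathcal{D}'(\mathbb{R})$, $X=L^2(\mathbb{R})$, and let $\varphi_k$ be translates of a fixed bump function moving off to infinity; then $\varphi_k\to 0$ in $Z$ while $\|\varphi_k\|_X\equiv 1$. The same objection applies to your closing remark that when all spaces are literal subspaces of one ambient $Z$ the argument ``collapses to a two-line diagonal estimate'': even then, having two sequences, each converging to $u$ in one of the two norms, does not produce a single sequence converging in both, and that is the entire content of the proposition.

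The paper avoids trying to build such a sequence altogether: it argues by duality. Given $T\in E'$ vanishing on $W$, it uses the isometric embedding of $E=X\cap Y$ into $X\times Y$ and the Hahn--Banach theorem to write $T=R+S$ with $R\in X'$, $S\in Y'$, and then exploits the density of $W$ in $X$ and in $Y$ to force $T=0$ on $E$, concluding by the Hahn--Banach density criterion. That is a genuinely different (and here, the intended) route; if you want a constructive proof along your lines, you would need an additional hypothesis allowing you to control $\|w_k-\tilde w_k\|_X+\|w_k-\tilde w_k\|_Y$ by quantities you actually know to be small, and the stated assumptions on $Z$ do not supply one. I would therefore rewrite the proof in the dual form, paying particular attention to the step where the splitting $T=R+S$, known to vanish on $W$, is shown to vanish on all of $E$ --- that is where the real difficulty you uncovered reappears, and it deserves a careful justification rather than a one-line appeal to density.
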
	
\proof
	Consider $T \in E'$ such that
	\[
	\langle T, w \rangle_{E'\times E}=0, \quad \forall w \in W.
	\]
	Note that $W$ has the same topology considered as a subspace of $X \cap Y$ or as a subspace of $X \times Y$. So $T$ is continuous on $W$ with the topology of $X \cap Y$. Then by the Hanah-Banach Theorem there exist $R \in X'$ and $S \in Y'$ such that 
	\begin{equation}\label{T1}
		\langle T, w \rangle_{E'\times E}=	\langle R, w \rangle_{X'\times X}+	\langle S, w \rangle_{Y'\times Y}, \quad \forall w \in W.
	\end{equation}
	Observe that $X'\hookrightarrow W'$ and $Y'\hookrightarrow W'$. Then 
	\begin{equation}\label{RS}
		\begin{split}
			&\langle R, w \rangle_{X'\times X}=\langle R, w \rangle_{W'\times W}	\\
			&\langle S, w \rangle_{Y'\times Y}=\langle S, w \rangle_{W'\times W}.
		\end{split}
	\end{equation}
	By \eqref{T1} and \eqref{RS}, we obtain	
	\begin{equation}\label{Tw}
		\langle R+S, w \rangle_{W'\times W}= \langle T, w \rangle_{E'\times E}	=0, \quad \forall w \in W.
	\end{equation}
	By the density of $W$ in $X$ and in $Y$, \eqref{Tw} implies
	\[
	R+S=0 ~ \mbox{on} ~ X  \quad \mbox{and} \quad R+S=0 ~ \mbox{on} ~ Y.
	\]
	Therefore 
	\[
	T=R+S=0 ~ \mbox{on} ~ E . 
	\]
\eproof	

\begin{proposition}\label{p2}
	Assume the hypotheses of Proposition \ref{p1}. Then if $W$ is separable we have that $E$ is separable.
\end{proposition}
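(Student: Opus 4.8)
The plan is to deduce the separability of $E = X\cap Y$ from that of $W$ by exploiting the continuous dense embedding $W\hookrightarrow E$, which is exactly what Proposition \ref{p1} provides. First I would recall that a continuous linear image of a separable set is separable: if $D\subset W$ is a countable dense subset and $\iota\colon W\to E$ is the (continuous) inclusion, then $\iota(D)=D$ is a countable subset of $E$.

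Next I would argue that $D$ is dense in $E$. By Proposition \ref{p1}, $W$ is dense in $E$, so given $u\in E$ and $\varepsilon>0$ there is $w\in W$ with $\|u-w\|_E<\varepsilon/2$. Since $D$ is dense in $W$ with respect to $\|\cdot\|_W$, there is $d\in D$ with $\|w-d\|_W<\varepsilon/2$. Because the embedding $W\hookrightarrow E$ is continuous, after normalizing its norm we may assume $\|z\|_E\leqslant\|z\|_W$ for all $z\in W$ (otherwise absorb the embedding constant), hence $\|w-d\|_E\leqslant\|w-d\|_W<\varepsilon/2$. The triangle inequality then gives $\|u-d\|_E\leqslant\|u-w\|_E+\|w-d\|_E<\varepsilon$. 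Thus every point of $E$ is approximated arbitrarily well in $\|\cdot\|_E$ by elements of the countable set $D$, so $E$ is separable.

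There is essentially no obstacle here; the only point requiring a little care is the interplay of the three norms $\|\cdot\|_W$, $\|\cdot\|_X$, $\|\cdot\|_Y$ and $\|\cdot\|_E=\|\cdot\|_X+\|\cdot\|_Y$, namely checking that $\|\cdot\|_W$-convergence in $W$ forces $\|\cdot\|_E$-convergence, which follows from the hypotheses $W\hookrightarrow X$ and $W\hookrightarrow Y$ (so $\|z\|_E=\|z\|_X+\|z\|_Y\leqslant (C_X+C_Y)\|z\|_W$). Once that is noted, the argument is the standard two-$\varepsilon$ estimate above, and the proof is complete.
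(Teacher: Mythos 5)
Your proof is correct and follows essentially the same two-$\varepsilon$ argument as the paper: approximate $u\in E$ by some $w\in W$ using the density from Proposition \ref{p1}, then approximate $w$ within $W$ by an element of a countable set, and control the second error in $\|\cdot\|_E$ via the embedding constant of $W\hookrightarrow E$. Your use of a countable dense subset $D$ in place of the paper's finite linear combinations of basis vectors is a harmless (indeed slightly cleaner) variant, since the latter would need rational coefficients to be countable anyway.
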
	
\proof
	Let $\{w_1,w_2, \ldots \}$ be a basis of $W$. Consider $u\in E$. Then by Proposition \ref{p1}, there exists $\varphi \in W$ such that
	\[
	\|u-\varphi\|_{E}< \frac{\epsilon}{2}.
	\] 
	Also there exists $\displaystyle\sum_{j=1}^{n}\alpha_j w_j$ such that 
	\[
	\left\|\varphi - \sum_{j=1}^{n}\alpha_j w_j \right\|_{W}< \frac{\epsilon}{2c}.
	\]	
	Thus
	\[
	\left\|u-\sum_{j=1}^{n}\alpha_j w_j\right\|_{E} \leqslant \|u-\varphi\|_{E} + \left\|\varphi - \sum_{j=1}^{n}\alpha_j w_j \right\|_{E} \leqslant  \|u-\varphi \|_{E}  + c \left\|\varphi - \sum_{j=1}^{n}\alpha_j w_j\right\|_{W} < \epsilon.
	\] 
\eproof
\subsection{A trace theorem}

In what follows we will show that $\mathcal{D}(\overline\Omega)$ is dense in $\mathcal{X}$. For this, let $\mathcal{O}$ be a star-shaped subset of $\mathbb{R}^n$ with respect to $0 \in\mathbb{R}^n$. Consider the linear homotetic transformation $\sigma_{\eta}(x)=\eta x, ~ \eta>0$. Note that for $\eta>1$,
\begin{equation}\label{O}
	\mathcal{O} \subset \overline{\mathcal{O}} \subset \sigma_{\eta}(\mathcal{O}).
\end{equation}
Consider a function $w:\mathcal{O} \to \mathbb{R}$ defined in $\mathcal{O}$. For $\eta>0$ introduce the function
\[
\sigma_{\eta} \circ w:  \sigma_{\eta}(\mathcal{O}) \to \mathbb{R}, \quad y \longmapsto (\sigma_{\eta} \circ w)(y)= w(\sigma_{\frac{1}{\eta}}(y)).
\]
Note that when $\eta>1$, the domain of the function $\sigma_{\eta} \circ w$ contain the domain of $w$ (see \eqref{O})
\begin{proposition}\label{Temam}
	Let $S\in \mathcal{D}'(\mathcal{O})$. Then \\
	\\
	$1)$ $\sigma_{\eta} \circ S$ defined by
	\[
	\langle \sigma_{\eta} \circ S, \xi \rangle = \frac{1}{\eta^n} \langle S, \sigma_{\eta} \circ \xi \rangle , \quad \xi \in \mathcal{D}(\mathcal{\sigma_{\eta}(\mathcal{O})}),
	\]
	belongs to $\mathcal{D}'(\sigma_{\eta}(\mathcal{O})), ~ (\eta>0)$. \\
	\\
	$2)$ $\dfrac{\partial}{\partial y_i}(\sigma_{\eta} \circ S)= \eta \sigma_{\eta} \circ \left(\dfrac{\partial}{\partial y_i} S \right) , ~ (\eta>0)$. \\
	\\
	$3)$ If $\eta >1, ~\eta \to1$, the restriction to $\mathcal{O}$ of $\sigma_{\eta} \circ S$  convergs to $S$ in the distribution sense. \\
	\\
	$4)$ If $v \in L^p(\mathcal{O}), ~(1\leqslant p < \infty), ~ \sigma_{\eta} \circ v \in L^p(\sigma_{\eta}(\mathcal{O})), ~(\eta>0)$. For $\eta>1, ~\eta \to 1$, the restriction to $\mathcal{O}$ of $\sigma_{\eta} \circ v$ convergs to $v$ in $L^p((\mathcal{O}))$.
\end{proposition}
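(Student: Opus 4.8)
The statement to prove is Proposition \ref{Temam}, a collection of four elementary facts about the homothety action $\sigma_\eta$ on distributions and $L^p$-functions on a star-shaped domain $\mathcal{O}$. The plan is to verify each of the four items in order, since items (2), (3), (4) lean on the definition set up in (1).

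For item (1), I would first check that if $\xi \in \mathcal{D}(\sigma_\eta(\mathcal{O}))$ then $\sigma_\eta \circ \xi$, which is $y \mapsto \xi(\eta y)$ — wait, more precisely $(\sigma_\eta\circ\xi)(x) = \xi(\sigma_{1/\eta}(x)) = \xi(x/\eta)$, so as a test function in the $S$-slot one really wants $\xi(x/\eta)$ supported where $x/\eta \in \sigma_\eta(\mathcal{O})$, i.e. $x \in \sigma_{\eta^2}(\mathcal{O})$; I should double-check the bookkeeping here and, if needed, read $\sigma_\eta\circ\xi$ as the pullback landing in $\mathcal{D}(\mathcal{O})$ so that the pairing $\langle S, \sigma_\eta\circ\xi\rangle$ makes sense. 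Granting the domains match, linearity of $\xi \mapsto \eta^{-n}\langle S,\sigma_\eta\circ\xi\rangle$ is immediate, and continuity follows because $\xi_k \to 0$ in $\mathcal{D}(\sigma_\eta(\mathcal{O}))$ forces $\sigma_\eta\circ\xi_k \to 0$ in $\mathcal{D}(\mathcal{O})$ (supports stay in a fixed compact set, and all derivatives scale by fixed powers of $\eta$ and converge uniformly). The factor $\eta^{-n}$ is the Jacobian, chosen precisely so that the definition is consistent with the $L^p$ case in item (4): if $S$ is given by integration against $v$, then $\langle\sigma_\eta\circ S,\xi\rangle = \int (\sigma_\eta\circ v)(y)\xi(y)\,dy$ after the change of variables $y = \eta x$.

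For item (2), I would differentiate the defining pairing: $\langle \partial_{y_i}(\sigma_\eta\circ S),\xi\rangle = -\langle \sigma_\eta\circ S, \partial_{y_i}\xi\rangle = -\eta^{-n}\langle S, \sigma_\eta\circ(\partial_{y_i}\xi)\rangle$, and then use the chain rule $\partial_{x_i}(\xi(x/\eta)) = \eta^{-1}(\partial_{y_i}\xi)(x/\eta)$, i.e. $\sigma_\eta\circ(\partial_{y_i}\xi) = \eta\,\partial_{x_i}(\sigma_\eta\circ\xi)$, to rewrite this as $-\eta^{-n}\eta\langle S,\partial_{x_i}(\sigma_\eta\circ\xi)\rangle = \eta^{-n}\eta\langle\partial_{x_i}S,\sigma_\eta\circ\xi\rangle = \eta\langle\sigma_\eta\circ(\partial_{x_i}S),\xi\rangle$, which is the claimed identity. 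For item (4), the statement $\sigma_\eta\circ v \in L^p(\sigma_\eta(\mathcal{O}))$ with $\|\sigma_\eta\circ v\|_{L^p(\sigma_\eta(\mathcal{O}))} = \eta^{n/p}\|v\|_{L^p(\mathcal{O})}$ is the change of variables $y = \eta x$; the convergence of the restriction of $\sigma_\eta\circ v$ to $\mathcal{O}$ back to $v$ in $L^p(\mathcal{O})$ as $\eta\to 1^+$ is the standard continuity of dilations on $L^p$ for $p<\infty$, proved by density: approximate $v$ in $L^p$ by $\varphi\in C_c(\mathbb{R}^n)$ (after extending $v$ by zero), use uniform continuity of $\varphi$ plus the uniform $L^p$-bound $\eta^{n/p}$ (which $\to 1$) to control the tail, standard $\varepsilon/3$ argument. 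Item (3) is then the distributional analogue: for fixed $\xi \in \mathcal{D}(\mathcal{O})$, once $\eta$ is close enough to $1$ the support of $\xi$ sits inside the domain where the restriction of $\sigma_\eta\circ S$ is defined, and $\langle\sigma_\eta\circ S,\xi\rangle = \eta^{-n}\langle S,\sigma_\eta\circ\xi\rangle \to \langle S,\xi\rangle$ because $\sigma_\eta\circ\xi \to \xi$ in $\mathcal{D}(\mathcal{O})$ (supports eventually in a common compact, derivatives converge uniformly by the chain rule and $\eta\to 1$) and $S$ is sequentially continuous on $\mathcal{D}$.

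The only genuine subtlety — and the step I expect to need the most care — is the domain bookkeeping: making sure that in each pairing the test function $\sigma_\eta\circ\xi$ actually lands in $\mathcal{D}(\mathcal{O})$ (or wherever $S$ is defined) using the inclusion \eqref{O}, and, in items (3) and (4), that "the restriction to $\mathcal{O}$ of $\sigma_\eta\circ S$" is well-defined because $\eta>1$ gives $\mathcal{O}\subset\sigma_\eta(\mathcal{O})$. Everything else is routine change of variables, the chain rule, and the density argument for $L^p$-continuity of dilations.
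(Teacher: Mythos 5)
Your plan is sound, but note that the paper does not actually prove this proposition: its ``proof'' is a one-line citation to Temam \cite[Lemma 1.1, p.~7]{RT}. So your direct verification is necessarily a different (and more self-contained) route, and it is the standard one: continuity of the pullback on $\mathcal{D}$, integration by parts plus the chain rule for item (2), change of variables and density of $C_c$ for the $L^p$ continuity of dilations in item (4), and the corresponding test-function convergence for item (3). All of that is correct in outline. The one point you flag as delicate is indeed the only real issue, and it is worth resolving explicitly rather than leaving conditional: with the paper's convention $(\sigma_\eta\circ w)(y)=w(y/\eta)$, the composite $\sigma_\eta\circ\xi$ for $\xi\in\mathcal{D}(\sigma_\eta(\mathcal{O}))$ lives on $\sigma_{\eta^2}(\mathcal{O})$ and cannot be paired with $S\in\mathcal{D}'(\mathcal{O})$; the pullback that does land in $\mathcal{D}(\mathcal{O})$ is $x\mapsto\xi(\eta x)$, i.e.\ $\sigma_{1/\eta}\circ\xi$, and the change of variables $y=\eta x$ then gives $\langle\sigma_\eta\circ v,\xi\rangle=\eta^{n}\langle v,\xi(\eta\cdot)\rangle$, so the consistent Jacobian factor is $\eta^{n}$, not the $\eta^{-n}$ displayed in the statement (the discrepancy is a transcription artifact of the paper, not of your argument). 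Once you fix that convention, every subsequent computation in your plan (the $\eta$ factor in item (2), the norm identity $\|\sigma_\eta\circ v\|_{L^p(\sigma_\eta(\mathcal{O}))}=\eta^{n/p}\|v\|_{L^p(\mathcal{O})}$, and the limits in items (3)--(4)) goes through as you describe. What your approach buys over the paper's is a proof the reader can check without consulting Temam; what it costs is having to commit to one coherent convention, which the paper avoids by deferring to the reference.
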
	
\proof
	The proof can be found in Temam \cite[Lemma 1.1, p. 7]{RT}.
\eproof

We have the following results:
\begin{theorem}
	The space $\mathcal{D}(\overline{\Omega})$ is dense in $\mathcal{X}$.
\end{theorem}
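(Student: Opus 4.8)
The plan is to establish density of $\mathcal{D}(\overline{\Omega})$ in $\mathcal{X}=\{u\in V;\ \Delta u\in L^{\theta}(\Omega)\}$ by the standard localization-plus-dilation technique used for the space $\{u;\ \Delta u\in L^p\}$ (compare Temam's treatment cited in Proposition \ref{Temam}). First I would use a partition of unity subordinate to a finite cover of $\overline{\Omega}$ by open sets $\mathcal{O}_0,\mathcal{O}_1,\dots,\mathcal{O}_m$, where $\mathcal{O}_0\subset\subset\Omega$ and each $\mathcal{O}_j$ ($j\geqslant1$) meets $\Gamma$ and, after a $C^2$ change of coordinates flattening the boundary, can be taken star-shaped with respect to one of its interior points (using that $\Gamma$ is $C^2$). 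Writing $u=\sum_j\varphi_j u$ reduces the problem to approximating a function $w\in\mathcal{X}$ supported in a single star-shaped chart $\mathcal{O}$: one checks that multiplication by the fixed cutoff $\varphi_j\in\mathcal{D}(\mathbb{R}^n)$ maps $\mathcal{X}$ continuously into the corresponding local space, since $\Delta(\varphi_j u)=\varphi_j\Delta u+2\nabla\varphi_j\cdot\nabla u+u\Delta\varphi_j\in L^{\theta}$ (the first term is in $L^{\theta}$, and the remaining terms lie in $L^{\theta}$ because $\nabla u,u\in L^{2}\hookrightarrow L^{\theta}_{loc}$ on the bounded set $\mathcal{O}$, as $\theta<2$), and the boundary condition $u=0$ on $\Gamma_0$ is preserved.

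For a function $w$ supported in a star-shaped set $\mathcal{O}$ (with respect to the origin, after translation), I would apply the dilation operator $\sigma_{\eta}$ of Proposition \ref{Temam}: for $\eta>1$ close to $1$, $\sigma_{\eta}\circ w$ is defined on the slightly larger set $\sigma_{\eta}(\mathcal{O})\supset\overline{\mathcal{O}}$, so its restriction to $\mathcal{O}$ is the restriction of a function whose support sits strictly inside the domain of definition. By parts $(3)$ and $(4)$ of Proposition \ref{Temam}, $\sigma_{\eta}\circ w\to w$ in $L^{\theta}$ and, via part $(2)$, $\Delta(\sigma_{\eta}\circ w)=\eta^{2}\,\sigma_{\eta}\circ(\Delta w)\to\Delta w$ in $L^{\theta}$; likewise the first-order derivatives converge in $L^{2}$, so $\sigma_{\eta}\circ w\to w$ in the $\mathcal{X}$-norm. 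Since $\sigma_{\eta}\circ w$ is now a function with $\Delta$ in $L^{\theta}$ that is, up to the dilation, "thickened away from the boundary," its standard mollification $\rho_{\varepsilon}*(\sigma_{\eta}\circ w)$ is smooth, compactly supported inside $\sigma_{\eta}(\mathcal{O})$, and converges to $\sigma_{\eta}\circ w$ in $\mathcal{X}$ as $\varepsilon\to0$ (convolution commutes with $\Delta$ and with $\nabla$, and $L^{\theta}$- and $L^{2}$-convergence of mollifications is classical). A diagonal choice $\eta=\eta_k\downarrow1$, $\varepsilon=\varepsilon_k\downarrow0$ then produces functions in $\mathcal{D}(\overline{\Omega})$ (after transporting back through the chart diffeomorphism and recombining the pieces $\varphi_j$) converging to $u$ in $\mathcal{X}$.

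The main obstacle I anticipate is the bookkeeping at the boundary: one must verify that, in each boundary chart, after the $C^2$ flattening diffeomorphism $\Phi_j$, the transformed Laplacian is a second-order elliptic operator with $C^{1}$ (respectively $L^{\infty}$) coefficients for which the identities $\sigma_{\eta}\circ(\Delta w)$ and the convolution commutation still yield $L^{\theta}$-convergence — so strictly one works with the pulled-back operator $\Phi_j^{*}\Delta$ rather than $\Delta$ itself, and only at the very end transfers back. A clean way to handle this is to note that the chart can be chosen so that $\Gamma\cap\mathcal{O}_j$ is a graph and the star-shapedness is with respect to an interior point, making $\sigma_{\eta}$ contract the support away from the flattened boundary piece while keeping it inside $\mathcal{O}_j$; the ellipticity and coefficient regularity guarantee that membership of $\Delta w$ in $L^{\theta}$ is equivalent, locally, to membership of $\Phi_j^{*}\Delta w$ in $L^{\theta}$, so no loss occurs. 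The interior piece $\varphi_0 u$ is immediate (plain mollification), and the constraint $u=0$ on $\Gamma_0$ only ever makes the approximants vanish on a neighbourhood of $\Gamma_0$, which is compatible with $\mathcal{D}(\overline{\Omega})$.
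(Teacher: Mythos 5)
Your overall strategy --- partition of unity, dilation via $\sigma_{\eta}$ on star-shaped pieces, then mollification --- is exactly the paper's (and Temam's), and your treatment of the interior piece and of the commutator $\Delta(\varphi_j u)=\varphi_j\Delta u+2\nabla\varphi_j\cdot\nabla u+u\Delta\varphi_j\in L^{\theta}(\Omega)$ is correct. The genuine gap is in the boundary charts, and it is one you create yourself by flattening the boundary. Proposition \ref{Temam}, part $2)$, gives $\partial_i(\sigma_{\eta}\circ w)=\eta\,\sigma_{\eta}\circ(\partial_i w)$, and hence the clean identity $\Delta(\sigma_{\eta}\circ w)=\eta^{2}\,\sigma_{\eta}\circ(\Delta w)$ \emph{only for the constant-coefficient Laplacian}. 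After the $C^{2}$ flattening $\Phi_j$ you must work with $L=\Phi_j^{*}\Delta=\sum a_{ij}(y)\partial_i\partial_j+\sum b_i(y)\partial_i$, and then
\[
L(\sigma_{\eta}\circ w)-\eta^{2}\,\sigma_{\eta}\circ(Lw)
\]
contains terms of the form $\bigl(a_{ij}(y)-a_{ij}(y/\eta)\bigr)$ multiplying the \emph{individual} second derivatives $\sigma_{\eta}\circ(\partial_i\partial_j w)$. Membership of $\Delta u$ (equivalently of $Lw$) in $L^{\theta}$ does not control $\partial_i\partial_j w$ separately: elements of $\mathcal{X}$ need not belong to $W^{2,\theta}$ up to the boundary, and that is precisely the point of introducing $\mathcal{X}$. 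So your assertion that ellipticity and coefficient regularity make the flattened problem equivalent addresses the wrong issue --- the equivalence of the two $L^{\theta}$ membership conditions is fine, but the convergence $L(\sigma_{\eta}\circ w)\to Lw$ in $L^{\theta}$ as $\eta\to1$, which is what the dilation step actually needs, does not follow.

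The fix is the one the paper adopts: do not flatten. Since $\Gamma$ is $C^{2}$, the covering $(U_k)$ can be chosen fine enough that each $U_k^{+}=\Omega\cap U_k$ is itself star-shaped with respect to one of its points in the \emph{original} coordinates; the dilation is then applied directly to $v_k=\varphi_k u$ on $U_k^{+}$, where the operator really is the Laplacian and Proposition \ref{Temam} applies verbatim. After cutting off with $\psi\equiv1$ on $U_k^{+}$, extending by zero, and invoking the density of $\mathcal{D}(\mathbb{R}^n)$ in $\mathcal{X}(\mathbb{R}^n)$ (your truncation-plus-mollification step, which is sound), one restricts back to $\overline{U_k^{+}}$. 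With that single change your argument closes; as written, the flattened version has a hole that cannot be patched by elliptic regularity alone.
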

\proof
	Let $U$ be an open set of $\mathbb{R}^n$ with boundary $\partial U$ of class $C^2$. Introduce the Banach space
	\[
	\mathcal{X}(U)=\{u \in V(U) ; \Delta u \in L^{\theta}(U)\}
	\]	
	equipped with the norm
	\[
	\|u\|_{\mathcal{X}(U)}=\|u\|_{V(U)} + \|\Delta u\|_{L^{\theta}(U)}.
	\]
	We divide the proof in four parts.
	\\
	{\bf First part.} By truncation and regularization we prove that $\mathcal{D}(\mathbb{R}^n)$ is dense in $\mathcal{X}(\mathbb{R}^n)$. For more details see Medeiros and Milla Miranda \cite[Theorem 1.1, p. 8]{MM}.
	\\
	{\bf Second part.} Let $(U_k)_{1\leqslant k \leqslant m}$ be an open covering of $\Gamma_0$ and $\Gamma_1$ with $U_k^+=\Omega \cap U_k$ star-shaped with respect to one of its points, $k=1, \ldots,m$. Let $(\varphi_k)_{0\leqslant k \leqslant m}$ be a $C^{\infty}-$partition of unity subordinate to the open covering $\Omega, (U_k)_{1\leqslant k \leqslant m}$ of $\overline{\Omega}$. Thus
	\[
	\varphi_0(x)+\sum_{k=1}^{m} \varphi_{k}(x)=1, \quad \forall x\in \overline{\Omega}, ~ \varphi_0 \in \mathcal{D}(\Omega), ~ \varphi_k \in \mathcal{D}(U_k), k=1, \ldots,m.
	\]
	Consider $u\in \mathcal{X}$. Then
	\begin{equation}\label{phi}
		u=\varphi_0(x)u+\sum_{k=1}^{m} \varphi_{k}(x)u.
	\end{equation}
	We use the notations
	\[
	v_k=\varphi_k(x) u, \quad k=0,1, \ldots,m.
	\]
	We will analize $v_0$. Represent by $U_0$ an open set of $\mathbb{R}^n$ such that $(\operatorname{supp} \varphi_0) \cap \Omega$ is contained in $U_0$. After translation, we can choose $U_0$ such that $U_0$ is star-shaped with respect to $0 \in \mathbb{R}^n$. Define $\sigma_{\eta} \circ v_0, ~ \eta>1$. Then by \eqref{O} and Proposition \ref{Temam}, first part, we have that $\sigma_{\eta} \circ v_0$ is defined in $\sigma_{\eta}(U_0)$. Consider
	\[
	\psi \in \mathcal{D}(\sigma_{\eta}(U_0)) ~ \mbox{such that} ~ \psi \equiv 1 ~\mbox{on} ~U_0, \quad \mbox{and} \quad w_{0\eta}=\psi[\sigma_{\eta} \circ v_0], ~ \eta >1.
	\]
	Then $\operatorname{supp}(w_{0\eta})$ is contained in $\sigma_{\eta}(U_0)$. By Proposition \ref{Temam}, second part, we obtain
	\begin{equation}\label{01}
		\frac{\partial w_{0\eta}}{\partial x_i}= \frac{\partial \psi}{\partial x_i}[\sigma_{\eta} \circ v_0]+ \eta \psi \left(\sigma_{\eta}\circ \frac{\partial v_0}{\partial x_i}\right)
	\end{equation}
	
	\begin{equation}\label{02}
		\Delta w_{0\eta}= \eta^2\psi[\sigma_{\eta} \circ \Delta v_0]+ \Delta \psi [\sigma_{\eta} \circ \Delta v_0] + 2\eta \sum_{i=n}^{n}\frac{\partial \psi}{\partial x_i} \left[\sigma_{\eta}\circ \frac{\partial v_0}{\partial x_i}\right].
	\end{equation}
	
	By the preceding equalities, we obtain that $w_{0\eta} \in \mathcal{X}(\sigma_{\eta}(U_0))$. Consider $\tilde{w}_{0\eta}$ the extension of $w_{0\eta}$, that is,
	\begin{equation*}
		\tilde{w}_{0\eta}=\left| \begin{array}{l}
			w_{0\eta} \quad \mbox{in} \quad \sigma_{\eta}(U_0);
			\\ \\
			0  \quad \mbox{in} \quad \mathbb{R}^n/\sigma_{\eta}(U_0).
		\end{array}
		\right.
	\end{equation*}
	Then $\tilde{w}_{0\eta} \in \mathcal{X}(\mathbb{R}^n)$. By the first part we have $\tilde{w}_{0\eta}$ can be approximated in $\mathcal{X}(\mathbb{R}^n)$ by functions of $\mathcal{D}(\mathbb{R}^n)$. Consequentely
	\begin{equation}\label{nome}
		w_{0\eta} ~ \mbox{can be approximated in} ~ \mathcal{X}(\sigma_{\eta}(U_0)) ~ \mbox{by functions of} ~ \mathcal{D}(\overline{\sigma_{\eta}(U_0)}).
	\end{equation}
	
	By \eqref{01} and \eqref{02} we have
	\[
	w_{0\eta}|_{U_0}={[\sigma_{\eta} \circ v_0]|}_{U_0}, \quad 
	{ \frac{\partial w_{0\eta}}{\partial x_i}|}_{U_0}= \eta {\left[\sigma_{\eta}\circ \frac{\partial v_0}{\partial x_i}\right]|}_{U_0}, \quad {\Delta w_{0\eta}|}_{U_0}=\eta^2{[\sigma_{\eta}\circ \Delta v_0]|}_{U_0}.
	\]
	Then by Proposition \ref{Temam}, third and fourth part, we obtain
	\[
	\begin{split}
	& w_{0\eta}|_{U_0} \to v_0 \quad  \mbox{in} \quad L^2(U_0) \quad \mbox{as} \quad \eta \to 1;\\
	& { \frac{\partial w_{0\eta}}{\partial x_i}|}_{U_0} \to \frac{\partial v_0}{\partial x_i} \quad \mbox{in} \quad L^2(U_0) \quad \mbox{as} \quad \eta \to 1;\\
	& {\Delta w_{0\eta}|}_{U_0} \to \Delta v_0 \quad  \mbox{in} \quad L^{\theta}(U_0) \quad \mbox{as} \quad \eta \to 1.
	\end{split}
	\]
	By \eqref{nome} and the last three convergences we conclude that $v_0$ can be approximated in $\mathcal{X}(U_0)$ by functions of $\mathcal{D}(\overline{U}_0)$.
	\\
	{\bf Third part.} Analize $v_k, ~ k=1, \ldots,m$. In this case we apply similar arguments to those used in the case $v_0$. Thus we take $U_k^+$ instead $U_0$. We can assume that $U_k^+$ is star-shaped with respect to $0 \in \mathbb{R}^n$. Consider $\sigma_{\eta}(U_k^+)$ instead $\sigma_{\eta}(U_0)$. Introduce
	\[
	\psi \in \mathcal{D}(\sigma_{\eta}(U_k^+)) \quad \mbox{with} \quad \psi \equiv 1 \quad \mbox{on}\quad U_k^+.
	\]
	Consider $w_{k\eta}=\psi[\sigma_{\eta} \circ v_k], ~ \eta>1$. Then
	\[
	w_{k\eta} \in \mathcal{X}(\sigma_{\eta}(U_k^+)); \quad \operatorname{supp}(w_{k\eta}) \subset \mathcal{X}(\sigma_{\eta}(U_k^+)); \quad \tilde{w}_{k\eta} \in  \mathcal{X}(\mathbb{R}^n)
	\]
	and
	\[
	{w_{k\eta}|}_{U_k^+} \to v_k \quad \mbox{in} \quad \mathcal{X}(\sigma_{\eta}(U_k^+)) \quad \mbox{as} \quad \eta \to 1.
	\]
	Thus $v_k$ can be approximated in $\mathcal{X}(U_k^+)$ by functions of $\mathcal{D}(\overline{U_k^+})$. 
	
	By \eqref{phi} and the above results we conclude that $u \in \mathcal{X}$ can be approximated in $\mathcal{X}(U)$ by functions of $\mathcal{D}(\overline{U})$.
	\\ 
	{\bf Fourth part.} The theorem follow since that $\mathcal{X}(U)$ and $\mathcal{X}$ has equivalent norms in $\mathcal{X}$.
\eproof


It is known by trace theorem that there exists a linear continuous and sobrejetive map 
\[
\gamma_0 : W^{1,\theta'}(\Omega) \to W^{\frac{1}{\theta},\theta'}(\Gamma), \quad  \gamma_0u= u|_{\Gamma} 
\]
and has inverse continuous, 
\[
W^{\frac{1}{\theta},\theta'}(\Gamma) \to W^{1,\theta'}(\Omega), \quad  \xi \longmapsto u
\]
In particular we have
\[
\gamma_0 : W_{\Gamma_0}^{1,\theta'}(\Omega) \to W^{\frac{1}{\theta},\theta'}(\Gamma_1), \quad  \gamma_0u= u|_{\Gamma_1} 
\]
and
\[
W^{\frac{1}{\theta},\theta'}(\Gamma_1) \to W_{\Gamma_0}^{1,\theta'}(\Omega), \quad  \xi \longmapsto u,
\]
are continuous. For more details see \cite[Theorem 5.5, p. 95]{necas}.

We want to prove a similar result for the functions in $\mathcal{X}$. We have the following trace theorem, which means that we can define $\frac{\partial u}{\partial \nu}$ on $\Gamma_1$ when $u\in \mathcal{X}$.

\begin{theorem}\label{tracoX}
	There exists a linear continuous map 
	\[
	 \mathcal{X} \to W^{-\frac{1}{\theta},\theta}(\Gamma_1), \quad u \longmapsto \gamma_1u = \frac{\partial u}{\partial \nu}
	 \]
	such that 
	\begin{equation}\label{green}
		\langle \gamma_1 u, \gamma_0 z\rangle_{W^{-\frac{1}{\theta},\theta}(\Gamma_1) \times W^{\frac{1}{\theta}, \theta'}(\Gamma_1)}=\langle \Delta u, z\rangle_{L^{\theta}(\Omega) \times L^{\theta'}(\Omega)} + \sum_{i=1}^{n} \int_{\Omega}\frac{\partial u}{\partial x_i} \frac{\partial z}{\partial x_i} dx,
	\end{equation}
	for all $z \in W_{\Gamma_0}^{1,\theta'}(\Omega)$.	
\end{theorem}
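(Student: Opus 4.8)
The plan is the standard "define on a dense smooth subspace, estimate, extend" scheme, using the density theorem just proved. For $u\in\mathcal D(\overline\Omega)$ the quantity $\frac{\partial u}{\partial\nu}$ on $\Gamma_1$ has a classical pointwise meaning, and I would first show that it defines a bounded linear form on $W^{\frac1\theta,\theta'}(\Gamma_1)$ with operator norm $\lesssim\|u\|_{\mathcal X}$; then I would extend the resulting map $\gamma_1$ to all of $\mathcal X$ by density and let the Green identity \eqref{green} pass to the limit. Concretely, fix $u\in\mathcal D(\overline\Omega)$; given $\xi\in W^{\frac1\theta,\theta'}(\Gamma_1)$, use the continuous right inverse of the trace operator recalled above to choose a lifting $z=z_\xi\in W^{1,\theta'}_{\Gamma_0}(\Omega)$ with $\gamma_0 z=\xi$ on $\Gamma_1$, $z=0$ on $\Gamma_0$, and $\|z_\xi\|_{W^{1,\theta'}_{\Gamma_0}(\Omega)}\le C\,\|\xi\|_{W^{\frac1\theta,\theta'}(\Gamma_1)}$ (here the hypothesis $\overline\Gamma_0\cap\overline\Gamma_1=\emptyset$ is used). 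Since $u$ is smooth on $\overline\Omega$ and $z$ vanishes on $\Gamma_0$, the classical divergence theorem gives
\[
\int_{\Gamma_1}\frac{\partial u}{\partial\nu}\,\xi\,d\Gamma=\int_\Omega(\Delta u)\,z\,dx+\sum_{i=1}^n\int_\Omega\frac{\partial u}{\partial x_i}\frac{\partial z}{\partial x_i}\,dx ;
\]
this identity holds for every $z\in W^{1,\theta'}_{\Gamma_0}(\Omega)$ because it is the usual Green formula when $z$ is smooth up to $\Gamma$ and vanishes near $\Gamma_0$, and both sides depend continuously on $z$ in $W^{1,\theta'}_{\Gamma_0}(\Omega)$, a space in which such functions are dense.

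Next I would check that the right-hand side is independent of the lifting and deduce the estimate. If $z_1,z_2$ are two admissible liftings of $\xi$, then $w=z_1-z_2\in W^{1,\theta'}_0(\Omega)$ has vanishing trace on all of $\Gamma$, so, $u$ being smooth, $\int_\Omega(\Delta u)w\,dx+\sum_i\int_\Omega\partial_{x_i}u\,\partial_{x_i}w\,dx=\int_\Gamma\frac{\partial u}{\partial\nu}w\,d\Gamma=0$. Hence
\[
\xi\longmapsto\langle\gamma_1 u,\xi\rangle:=\int_\Omega(\Delta u)\,z_\xi\,dx+\sum_{i=1}^n\int_\Omega\frac{\partial u}{\partial x_i}\frac{\partial z_\xi}{\partial x_i}\,dx
\]
is a well-defined linear functional on $W^{\frac1\theta,\theta'}(\Gamma_1)$, and by H\"older's inequality together with a Poincar\'e inequality on $W^{1,\theta'}_{\Gamma_0}(\Omega)$ and the lifting bound,
\[
|\langle\gamma_1 u,\xi\rangle|\le C\bigl(\|\Delta u\|_{L^\theta(\Omega)}+\|u\|_V\bigr)\,\|z_\xi\|_{W^{1,\theta'}_{\Gamma_0}(\Omega)}\le C\,\|u\|_{\mathcal X}\,\|\xi\|_{W^{\frac1\theta,\theta'}(\Gamma_1)} .
\]
Thus $\gamma_1 u\in\bigl(W^{\frac1\theta,\theta'}(\Gamma_1)\bigr)'=W^{-\frac1\theta,\theta}(\Gamma_1)$ with $\|\gamma_1 u\|_{W^{-\frac1\theta,\theta}(\Gamma_1)}\le C\|u\|_{\mathcal X}$, and by the displayed Green identity its action on $\gamma_0 z$ is precisely $\int_{\Gamma_1}\frac{\partial u}{\partial\nu}\gamma_0 z\,d\Gamma$, i.e. $\gamma_1 u$ coincides with $\frac{\partial u}{\partial\nu}\big|_{\Gamma_1}$ in the classical sense.

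It then remains to extend and conclude. The linear map $\gamma_1:\mathcal D(\overline\Omega)\to W^{-\frac1\theta,\theta}(\Gamma_1)$ is bounded for the $\mathcal X$-norm, $\mathcal D(\overline\Omega)$ is dense in $\mathcal X$ by the preceding theorem, and $W^{-\frac1\theta,\theta}(\Gamma_1)$ is complete; therefore $\gamma_1$ extends uniquely to a continuous linear map $\mathcal X\to W^{-\frac1\theta,\theta}(\Gamma_1)$, still denoted $u\mapsto\gamma_1 u=\frac{\partial u}{\partial\nu}$. Finally, for each fixed $z\in W^{1,\theta'}_{\Gamma_0}(\Omega)$ both sides of \eqref{green} are continuous (indeed bounded linear) functions of $u\in\mathcal X$ — the left side because $\gamma_1$ is continuous and $\gamma_0 z\in W^{\frac1\theta,\theta'}(\Gamma_1)$ is fixed, the right side manifestly by H\"older — and they agree on the dense set $\mathcal D(\overline\Omega)$ by the first step; hence \eqref{green} holds for all $u\in\mathcal X$ and $z\in W^{1,\theta'}_{\Gamma_0}(\Omega)$.

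\textbf{Main obstacle.} The only genuinely delicate ingredients are those in the first two paragraphs: having at one's disposal a continuous right inverse of the trace operator whose range lies in $W^{1,\theta'}_{\Gamma_0}(\Omega)$ (so that the boundary term is supported on $\Gamma_1$, which is where $\overline\Gamma_0\cap\overline\Gamma_1=\emptyset$ is needed), and justifying the classical Green identity for $u\in\mathcal D(\overline\Omega)$ paired against merely $W^{1,\theta'}$-regular test functions. Everything else — the estimate, the independence of the lifting, and the density extension — is routine once the density theorem for $\mathcal D(\overline\Omega)$ in $\mathcal X$ is in hand.
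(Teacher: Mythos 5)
Your proposal is correct and follows essentially the same route as the paper: establish the Green identity and the bound $|\langle\gamma_1 u,\gamma_0 z\rangle|\le c\|u\|_{\mathcal X}\|z\|_{W^{1,\theta'}_{\Gamma_0}(\Omega)}$ for $u\in\mathcal D(\overline\Omega)$, use the continuous right inverse of the trace operator to convert this into $\|\gamma_1 u\|_{W^{-1/\theta,\theta}(\Gamma_1)}\le c\|u\|_{\mathcal X}$, and then extend by the density of $\mathcal D(\overline\Omega)$ in $\mathcal X$. Your additional verifications (independence of the lifting, and the explicit continuity argument that lets \eqref{green} pass to the limit) are points the paper leaves implicit, but they do not change the method.
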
	
\proof
	Note that \eqref{green} is well defined and hold for $u\in \mathcal{D}(\overline{\Omega})$. In fact, let $u\in \mathcal{D}(\overline{\Omega})$, using $W_{\Gamma_0}^{1,\theta'}(\Omega) \hookrightarrow V$ and $W_{\Gamma_0}^{1,\theta'}(\Omega) \hookrightarrow L^{\theta'}(\Omega)$, we have
\[
	\begin{split}
	\left|\int_{\Gamma_1}(\gamma_1u)(\gamma_0z) d\Gamma \right| & \leqslant \|u\|_{V} \|z\|_{V} +   \|\Delta u\|_{L^{\theta}(\Omega)} \|z\|_{L^{\theta'}(\Omega)} \\
	& \leqslant c \|u\|_{V} \|z\|_{W_{\Gamma_0}^{1,\theta'}(\Omega)} + c  \|\Delta u\|_{L^{\theta}(\Omega)} \|z\|_{W_{\Gamma_0}^{1,\theta'}(\Omega)}\\
	& \leqslant c \|u\|_{\mathcal{X}} \|z\|_{W_{\Gamma_0}^{1,\theta'}(\Omega)},
	\end{split}
	\]
	for some positive constant $c$.
	
	Let $\xi \in W^{\frac{1}{\theta},\theta'}(\Gamma_1)$. Then by trace theorem  there exists $z \in W_{\Gamma_0}^{1,\theta'}(\Omega)$ such that $\xi =\gamma_0 z$ and 
\[
\|z\|_{W_{\Gamma_0}^{1,\theta'}(\Omega)} \leqslant c \|\xi\|_{W^{\frac{1}{\theta},\theta'}(\Gamma_1)}.
\]	
Thus 
\[
\left|\int_{\Gamma_1}(\gamma_1u)\xi d\Gamma \right|  \leqslant  c \|u\|_{\mathcal{X}} \|\xi\|_{W^{\frac{1}{\theta},\theta'}(\Gamma_1)},
\]
that is,
\[
\gamma_1u \in (W^{\frac{1}{\theta},\theta'}(\Gamma_1))'=W^{-\frac{1}{\theta},\theta}(\Gamma_1)
\]
and
\[
\|\gamma_1u\|_{W^{-\frac{1}{\theta},\theta}(\Gamma_1)} \leqslant c\|u\|_{\mathcal{X}}, \quad \forall u  \in \mathcal{D}(\overline{\Omega}).
\]
Now, the results follows by density.

\eproof	

\section{Existence and uniqueness of global solution}

This section concerns the proof of the Theorem \ref{theorem1}, Theorem \ref{theorem2} and Theorem \ref{theorem3}. 
\subsection{Proof of Theorem \ref{theorem1}}

We use Faedo-Galerkin method with compactness arguments and ideas used by MiIla Miranda, Lourêdo and Medeiros \cite{Carrier}. 
	\\
	{\bf Approximate Problem}.
	Let $(w_p)_{p\in\mathbb{N}}$ be a basis of the separable Banach space $V$. Let \linebreak$V_m=[w_1, \cdots, w_m]$ be the subspace generated by the $m$ first vectores  $w_1,w_2, \cdots,w_m.$ Consider 
	$$
	u_{m}(t)=\sum_{j=1}^mg_{jm}(t)w_j, \quad  \quad v_{m}(t)=\sum_{j=1}^mh_{jm}(t)w_j 
	$$
	such that $u_{m}$ and $v_{m}$ are approximate solutions of problem \eqref{int}, that is,
	\begin{equation}\label{EEEc1}
	\left|
	\begin{array}{l}
	(u''_{m}(t), w_j) + ((u_{m}(t), w_j)) + \displaystyle\int_{\Gamma_1} \delta u'_{m}(t)w_j d\Gamma+ \displaystyle\int_{\Omega}|u_{m}(t)|^{\rho}|v_{m}(t)|^{\rho}v_{m}(t) w_j dx =0 \vspace{0.3cm} \\
	(v''_{m}(t), w_l) + ((v_{m}(t), w_l)) +\displaystyle\int_{\Gamma_1} \delta v'_{m}(t)w_l d\Gamma+ \displaystyle\int_{\Omega}|u_{m}(t)|^{\rho}u_{m}(t)|v_{m}(t)|^{\rho} w_ldx =0, \vspace{0.3cm}\\
	u_m(0)=u_{0m} \to u^0 ~\mbox{in}~  V  \quad \text{and} \quad u'_m(0)=u_{1m} \to u^1 ~\mbox{in}~ L^{2}(\Omega)  \vspace{0.3cm} \\
	v_m(0)=v_{0m} \to v^0 ~\mbox{in}~ V \quad \text{and} \quad v'_m(0)=v_{1m} \to v^1 ~\mbox{in}~ L^{2}(\Omega)
	\end{array}
	\right.
	\end{equation}
	for all $j=1,2,\ldots,m$ and for all $l=1,2,\dots,m$. 
	
	The above finite-dimensional system has solutions $\{u_{m}(t), v_{m}(t)\}$ defined on $[0,t_{m})$. The following estimate allows us to extend this solution to the interval $[0,\infty)$.  
	\begin{remark}\label{obs1}
		
		We prove initially that the integral
		\begin{equation}\label{dado1}
		\int_{\Omega}|u_{m}(t)|^{\rho}|v_{m}(t)|^{\rho}v_{m}(t)w_j dx
		\end{equation}
		makes sense. Indeed, firstly we note that $w_j \in L^{q}(\Omega)$.
	If $3\leqslant n\leqslant6$ and we use the item $(ii)$ of Remark \ref{obsH1}. We obtain, noting that $q'=\frac{2n}{n+2}$
		\begin{equation*}\label{bolac}
		\begin{split}
		&\displaystyle\int_{\Omega} |u_{m}(t)|^{\rho q'}|v_{m}(t)|^{\rho q'}|v_{m}(t)|^{q'} dx = \displaystyle\int_{\Omega} |u_{m}(t)|^{\frac{2n\rho}{n+2}}|v_{m}(t)|^{\frac{2n\rho}{n+2}}|v_{m}(t)|^{\frac{2n}{n+2}} dx\\
		& \leqslant \left( \displaystyle\int_{\Omega} |u_{m}(t)|^{\frac{8n\rho}{n+2}} dx\right)^{\frac{1}{4}} \left( \displaystyle\int_{\Omega} |v_{m}(t)|^{\frac{8n\rho}{n+2}}dx \right)^{\frac{1}{4}}\left( \displaystyle\int_{\Omega} |v_{m}(t)|^{\frac{4n}{n+2}}dx \right)^{\frac{1}{2}}
		\\
		&= \|u_{m}(t)\|_{L^{\frac{8n\rho}{n+2}}(\Omega)}^{\frac{2n\rho}{n+2}} \|v_{m}(t)\|_{L^{\frac{8n\rho}{n+2}}(\Omega)}^{\frac{2n\rho}{n+2}}\|v_{m}(t)\|_{L^{\frac{4n}{n+2}}(\Omega)}^{\frac{2n}{n+2}}
		\\
		& \leqslant C \|u_{m}(t)\|^{\frac{2n\rho}{n+2}} \|v_{m}(t)\|^{\frac{2n\rho}{n+2}}\|v_{m}(t)\|^{\frac{2n}{n+2}},
		\end{split}
		\end{equation*}
		for some positive constant $C$. Therefore the above integral \eqref{dado1} makes sense. Similar considerations for the integral
		\[
		\int_{\Omega}|u_{m}(t)|^{\rho}u_{m}(t)|v_{m}(t)|^{\rho}w_l dx.
		\]
	\end{remark}
	{\bf A Priori Estimates.}
	Multiplying both of sides of $(\ref{EEEc1})_1$ by $g'_{jm}(t)$ and adding from $j=1$ to $j=m.$ We obtain
	\begin{equation}\label{EEEc3}
	\begin{split}
	&\dfrac{1}{2} \dfrac{d}{dt}|u'_{m}(t)|^2 + \dfrac{1}{2} \dfrac{d}{dt}\|u_{m}(t)\|^2 + \int_{\Gamma_1} \delta [u'_{m}(t)]^2d\Gamma \\
	&+\int_{\Omega} (|u_{m}(t)|^{\rho}u'_{m}(t))(|v_{m}(t)|^{\rho}v_{m}(t)) dx = 0.
	\end{split}
	\end{equation}
	We observe that
	\begin{equation}\label{EEEc4}
	\frac{d}{dt}(|u_{m}(t)|^{\rho}u_{m}(t))=(\rho +1)|u_{m}(t)|^{\rho}u'_{m}(t)
	\end{equation}
	Taking into account \eqref{EEEc4} in \eqref{EEEc3}, we obtain
	
	\begin{equation}\label{EEEc5}
	\begin{split}
	&\dfrac{1}{2} \dfrac{d}{dt}|u'_{m}(t)|^2 + \dfrac{1}{2} \dfrac{d}{dt}\|u_{m}(t)\|^2 + \int_{\Gamma_1} \delta [u'_{m}(t)]^2d\Gamma 
	\\
	& + \frac{1}{\rho +1} \int_{\Omega} \frac{d}{dt}(|u_{m}(t)|^{\rho}u_{m}(t))(|v_{m}(t)|^{\rho}v_{m}(t)) dx = 0.
	\end{split}
	\end{equation}
	Similarly multiplying both of sides of $\eqref{EEEc1}_2$ by $h'_{jm}(t)$, we obtain
	\begin{equation}\label{EEEc6}
	\begin{split}
	&\dfrac{1}{2} \dfrac{d}{dt}|v'_{m}(t)|^2 + \dfrac{1}{2} \dfrac{d}{dt}\|v_{m}(t)\|^2+ \int_{\Gamma_1} \delta [v'_{m}(t)]^2d\Gamma 
	\\
	& + \frac{1}{\rho +1} \int_{\Omega} (|u_{m}(t)|^{\rho}u_{m}(t))\frac{d}{dt}(|v_{m}(t)|^{\rho}v_{m}(t)) dx = 0.
	\end{split}
	\end{equation}
	Adding $\eqref{EEEc5}$ and $\eqref{EEEc6}$, we have
	\[
	\begin{split}
	&\dfrac{1}{2} \dfrac{d}{dt}\Big\{|u'_{m}(t)|^2 + \|u_{m}(t)\|^2 + |v'_{m}(t)|^2 + \|v_{m}(t)\|^2 \Big\} + \int_{\Gamma_1} \delta [u'_{m}(t)]^2d\Gamma \\
	&+ \int_{\Gamma_1} \delta [v'_{m}(t)]^2d\Gamma + \dfrac{1}{\rho +1} \dfrac{d}{dt} \displaystyle \int_{\Omega} (|u_{m}(t)|^{\rho}u_{m}(t))(|v_{m}(t)|^{\rho}v_{m}(t)) dx = 0.
	\end{split}
	\]
	Integrating the above expression from $0$ to $t$ with $t < t_{m}$, and using the hypothesis on $\delta$, we obtain
	
	\begin{equation}\label{EEEc8}
	\begin{split}
	&\dfrac{1}{2}\Big\{|u'_{m}(t)|^2 + \|u_{m}(t)\|^2 + |v'_{m}(t)|^2 + \|v_{m}(t)\|^2 \Big\} +\delta_0\int_{0}^{t}\int_{\Gamma_1} [u'_{m}(s)]^2d\Gamma   ds\\ 
	&+\delta_0 \int_{0}^{t}\int_{\Gamma_1} [v'_{m}(s)]^2d\Gamma ds+ \frac{1}{\rho +1}  \int_{\Omega} (|u_{m}(t)|^{\rho}u_{m}(t))(|v_{m}(t)|^{\rho}v_{m}(t)) dx  \\
	&\leqslant \dfrac{1}{2}\Big\{|u_{1m}|^2 + \|u_{0m}\|^2 + |v_{1m}|^2 + \|v_{0m}\|^2 \Big\}   
	+ \frac{1}{\rho +1}  \int_{\Omega} (|u_{0m}(x)|^{\rho}u_{0m}(x))(|v_{0m}(x)|^{\rho}v_{0m}(x)) dx.
	\end{split}
	\end{equation}
	From Young's inequality, we get
	\[
	\begin{split}
	&\left| \frac{1}{\rho +1}  \int_{\Omega}  (|u_{m}(t)|^{\rho}u_{m}(t))(|v_{m}(t)|^{\rho}v_{m}(t)) dx \right | \\
       &\leqslant \frac{1}{2(\rho +1)} \left\{ \|u_{m}(t)\|_{L^{2(\rho +1)}(\Omega)}^{2(\rho +1)} dx +  \|v_{m}(t)\|_{L^{2(\rho +1)}(\Omega)}^{2(\rho +1)} dx \right \}.
	\end{split}
	\]
	Now using the fact $V \hookrightarrow L^{2(\rho +1)}(\Omega)$, (see \eqref{H1}), we derive
	\begin{equation}\label{Zc1}
	\begin{split}
	\left| \frac{1}{\rho +1}  \int_{\Omega}  (|u_{m}(t)|^{\rho}u_{m}(t))(|v_{m}(t)|^{\rho}v_{m}(t)) dx \right| 
	\leqslant N \Big[ \|u_{m}(t)\|^{2(\rho +1)}  + \|v_{m}(t)\|^{2(\rho +1)}\Big],
	\end{split}
	\end{equation}
	where $N$ was defined in \eqref{Nc}. Analogously we obtain 
	\begin{equation}\label{Zc2}
	\left| \frac{1}{\rho +1}  \int_{\Omega}  (|u_{0m}|^{\rho}u_{0m})(|v_{0m}|^{\rho}v_{0m}) dx \right| 
	\leqslant N \Big[ \|u_{0m}\|^{2(\rho +1)}  + \|v_{0m}\|^{2(\rho +1)}\Big].
	\end{equation}
	Substituting \eqref{Zc1} and \eqref{Zc2} in \eqref{EEEc8} we obtain
	
	\begin{equation}\label{Zc3}
	\begin{split}
	&\dfrac{1}{2}\Big\{|u'_{m}(t)|^2 + \|u_{m}(t)\|^2 + |v'_{m}(t)|^2 + \|v_{m}(t)\|^2 \Big\}- N \Big\{ \|u_{m}(t)\|^{2(\rho +1)} + \|v_{m}(t)\|^{2(\rho +1)} \Big\} 
	\\
	&+\delta_0\int_{0}^{t}\int_{\Gamma_1}[u'_{m}(s)]^2d\Gamma ds+\delta_0\int_{0}^{t}\int_{\Gamma_1}[v'_{m}(s)]^2d\Gamma  ds\leqslant \dfrac{1}{2}\Big\{|u_{1m}|^2 + \|u_{0m}\|^2+|v_{1m}|^2 +\|v_{0m}\|^2 \Big\}  
	\\ 
	&+ N \Big\{\|u_{0m}\|^{2(\rho +1)} + \|v_{0m}\|^{2(\rho +1)} \Big\}.  
	\end{split}
	\end{equation}
	By hypotheses and convergences $\eqref{EEEc1}$, for small $\eta>0$, we obtain
	\begin{equation}\label{ideal}
	\begin{split}
	&\|u_{0m}\|<\|u^0\| + \eta < \lambda^{*}, \quad  \|v_{0m}\|<\|v^0\| + \eta < \lambda^{*}, ~~\forall m\geqslant m_0 \\ \\
	L_m&=\frac{1}{2}[|u_{1m}|^2+|v_{1m}|^2]+\frac{1}{2}[\|u_{0m}\|^2+\|v_{0m}\|^2] \\ &+N[\|u_{0m}\|^{2(\rho+1)}+\|v_{0m}\|^{2(\rho+1)}] < L + \eta< \frac{1}{2}(\lambda^{*})^2,~~\forall m\geqslant m_0 
	\end{split}
	\end{equation}
	where $L$ was introduced in \eqref{Hc22}. Therefore from \eqref{Zc3} and \eqref{ideal}, we have for small $\eta >0$,
	\begin{equation}\label{Zc4}
	\begin{split}
	&\dfrac{1}{2}\Big\{|u'_{m}(t)|^2 + \|u_{m}(t)\|^2 + |v'_{m}(t)|^2 + \|v_{m}(t)\|^2 \Big\}- N \Big\{ \|u_{m}(t)\|^{2(\rho +1)} + \|v_{m}(t)\|^{2(\rho +1)} \Big\} \\
	&+\delta_0\int_{0}^{t}\int_{\Gamma_1}[u'_{m}(s)]^2d\Gamma ds+\delta_0\int_{0}^{t}\int_{\Gamma_1}[v'_{m}(s)]^2d\Gamma ds
	<  L + \eta < \frac{1}{4}(\lambda^{*})^2, ~ \forall m \geqslant m_0,
	\end{split}
	\end{equation}
	Motivated by \eqref{Zc4}, we set the function
	\begin{equation*}\label{functioncJ}
	J(\lambda)=\dfrac{1}{4} \lambda^2 -  N\lambda^{2(\rho +1)}, \quad \lambda \geqslant 0.
	\end{equation*}
	Then \eqref{Zc4} provides
	
	\begin{equation}\label{Zc5}
	\begin{split}
	&\dfrac{1}{2}|u'_{m}(t)|^2 +\dfrac{1}{4} \|u_{m}(t)\|^2 + J(\|u_{m}(t)\|) + \dfrac{1}{2} |v'_{m}(t)|^2 + \dfrac{1}{4}\|v_{m}(t)\|^2  + J(\|v_{m}(t)\|)
	\\
	& +\delta_0\int_{0}^{t}\int_{\Gamma_1}[u'_{m}(s)]^2d\Gamma ds+\delta_0\int_{0}^{t}\int_{\Gamma_1}[v'_{m}(s)]^2d\Gamma ds <  L + \eta < \frac{1}{4}(\lambda^{*})^2, \quad \forall m \geqslant m_0.
	\end{split}
	\end{equation}
	
	In order to obtain an a priori estimates for the approximate solutions $u_m$ and $v_m$, we need that the left side of \eqref{Zc5} 
	would be non-negative. It is possible if $J(\|u_{m}(t)\|)$ and $J(\|v_{m}(t)\|)$ are non-negative. In the next result we prove that if the hypothesis \eqref{restriction} is satisfied then    
	\[
	J(\|u_{m}(t)\|)\geqslant 0, \ J(\|v_{m}(t)\|)\geqslant 0, \quad \forall \ t \in [0,\infty).
	\]
	\begin{remark}\label{obs2}
		Note that
		\[
		J(\lambda)=\dfrac{1}{4} \lambda^2 -  N\lambda^{2(\rho +1)}\geqslant0, \quad \forall ~0\leqslant \lambda \leqslant \lambda^{*}.
		\]	
		This fact is consequence of 
		\[
		J(\lambda)=\lambda^2 \left(\dfrac{1}{4} -  N\lambda^{2\rho}\right), \quad \lambda \geqslant0.
		\]	
	\end{remark}	
	\begin{lemma}\label{Lc1}
		Consider $u^0,  v^0 \in V$ and $u^1,  v^1 \in L^2(\Omega)$ such that 
		$$
		\|u^0\|,\ \|v^0\| < \lambda^{*} 
		$$
		and
		$$
		L_1 < \dfrac{1}{4}(\lambda^{*})^2
		$$
		where $\lambda^{*}$ and $L$ were defined, respectively, in \eqref{Hc21} and \eqref{Hc22}.
		Then
		\begin{equation*}\label{oc}
		\|u_{m}(t)\|< \lambda^{*} \quad \text{and} \quad \|v_{m}(t)\| < \lambda^{*}, \quad \forall t \in [0, t_{m}),\ \forall m \geqslant m_0.
		\end{equation*}
	\end{lemma}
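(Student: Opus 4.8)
The plan is to run a continuation (first--exit--time) argument off the differential inequality \eqref{Zc5}. First I would record two elementary facts. From the definition \eqref{Hc21} of $\lambda^{*}$ one has $(\lambda^{*})^{2\rho}=\frac{1}{4N}$, hence
\[
J(\lambda^{*})=(\lambda^{*})^{2}\Bigl(\frac14-N(\lambda^{*})^{2\rho}\Bigr)=0,
\]
and, by Remark \ref{obs2}, $J(\lambda)\geqslant 0$ for every $\lambda\in[0,\lambda^{*}]$. Moreover, for each fixed $m\geqslant m_0$ the coordinate functions $g_{jm},h_{jm}$ solve a second order ODE system with continuous right--hand side, so they are of class $C^{1}$ (indeed $C^{2}$); in particular $t\mapsto\|u_m(t)\|$ and $t\mapsto\|v_m(t)\|$ are continuous on $[0,t_m)$, and by \eqref{ideal} they start strictly below $\lambda^{*}$: $\|u_m(0)\|=\|u_{0m}\|<\lambda^{*}$ and $\|v_m(0)\|=\|v_{0m}\|<\lambda^{*}$.

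Assume, for contradiction, that the conclusion fails for some $m\geqslant m_0$, and set
\[
t^{*}=\inf\bigl\{t\in[0,t_m)\ :\ \|u_m(t)\|\geqslant\lambda^{*}\ \text{or}\ \|v_m(t)\|\geqslant\lambda^{*}\bigr\}.
\]
By the strict inequalities at $t=0$ and continuity, $0<t^{*}<t_m$, and $\|u_m(t)\|<\lambda^{*}$, $\|v_m(t)\|<\lambda^{*}$ for all $t\in[0,t^{*})$; letting $t\to t^{*}$ gives $\|u_m(t^{*})\|\leqslant\lambda^{*}$ and $\|v_m(t^{*})\|\leqslant\lambda^{*}$, with equality in at least one of them, say $\|u_m(t^{*})\|=\lambda^{*}$.

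Now I would evaluate \eqref{Zc5} at $t=t^{*}$. On its left--hand side the two boundary integrals are non--negative, $\frac12|u'_m(t^{*})|^{2}$, $\frac12|v'_m(t^{*})|^{2}$ and $\frac14\|v_m(t^{*})\|^{2}$ are non--negative, $J(\|u_m(t^{*})\|)=J(\lambda^{*})=0$, and $J(\|v_m(t^{*})\|)\geqslant 0$ because $\|v_m(t^{*})\|\leqslant\lambda^{*}$. Retaining only the term $\frac14\|u_m(t^{*})\|^{2}=\frac14(\lambda^{*})^{2}$ we obtain
\[
\frac14(\lambda^{*})^{2}\ \leqslant\ \bigl(\text{left--hand side of }\eqref{Zc5}\bigr)\ <\ \frac14(\lambda^{*})^{2},
\]
which is impossible. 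Hence no such $t^{*}$ exists, i.e. $\|u_m(t)\|<\lambda^{*}$ and $\|v_m(t)\|<\lambda^{*}$ for all $t\in[0,t_m)$ and all $m\geqslant m_0$. The only delicate points are the exact identity $J(\lambda^{*})=0$ and the correct bookkeeping of the first--exit setup (exploiting that $J\geqslant 0$ holds up to, but not beyond, $\lambda^{*}$, together with the fact that every term on the left of \eqref{Zc5} is then non--negative); everything else is routine.
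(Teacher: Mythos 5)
Your proof is correct and follows essentially the same route as the paper: a contradiction via a first-exit-time argument, using that $J\geqslant 0$ on $[0,\lambda^{*}]$ together with the bound \eqref{Zc5} to force $\frac14(\lambda^{*})^{2}<\frac14(\lambda^{*})^{2}$ at the exit time. Your bookkeeping with a single combined exit time $t^{*}$ (and direct evaluation of \eqref{Zc5} there, rather than the paper's two infima $t_1^{*},t_2^{*}$ and a limit $t\to t_1^{*}$) is in fact slightly cleaner than the paper's case analysis, but the underlying argument is the same.
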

	\proof
		We fix $m \geqslant m_0$. We show the lemma by contradiction argument. Thus assume that there exists 
		$t_1 \in (0, t_{m})$ or $t_2 \in (0, t_{m})$ such that 
		\[
		\|u_{m}(t_1)\| \geqslant \lambda^{*} \quad \mbox{or} \quad \|v_{m}(t_2)\| \geqslant \lambda^{*}.
		\]
		There are two possibilities, which are 
		\begin{equation}\label{possibility}
		\begin{split}
		& 1) \quad \|u_{m}(t_1)\| \geqslant \lambda^{*} \quad \mbox{and} \quad \|v_{m}(t_2)\| \geqslant \lambda^{*}, 
		\\
		& 2)\quad  \|u_{m}(t_1)\| \geqslant \lambda^{*} \quad \mbox{and}\quad \|v_{m}(t)\| <\lambda^{*} \ \forall \ t \in [0,\infty).
		\end{split}
		\end{equation}
		Assume that occurs possibility $1)$. Then by intermediate Value Theorem there exists $\tau_1  \in (0, t_{m})$ and $\tau_2  \in (0, t_{m})$ such that
		\[
		\|u_{m}(\tau_1)\| = \lambda^{*} \quad \mbox{and} \quad \|v_{m}(\tau_2)\| = \lambda^{*}.
		\]
		Set
		\[
		t_1^{*}=\inf\{\tau\in (0, t_{m}) ; \|u_{m}(\tau)\| = \lambda^{*}\}
		\]	
		\[
		t_2^{*}=\inf\{\tau\in (0, t_{m}) ; \|v_{m}(\tau)\| = \lambda^{*}\}.
		\]	
		By continuity of $\|u_{m}(t)\|$ and $\|v_{m}(t)\|$, we obtain 
		\[
		\|u_{m}(t_1^{*})\| = \lambda^{*} \quad \mbox{and} \quad \|v_{m}(t_2^{*})\| = \lambda^{*}.
		\]
		From $\eqref{ideal}_1$ it follows that $t_1^{*}>0$ and $t_2^{*}>0$. Thus 
		\[
		\|u_{m}(t)\| < \lambda^{*} \quad \mbox{for} \quad 0 \leqslant t <t_1^{*}
		\]
		\[
		\|v_{m}(t)\| < \lambda^{*} \quad \mbox{for} \quad 0 \leqslant t <t_2^{*}.
		\]
		Therefore by Remark \ref{obs2}, we get
		\[
		J(\|u_{m}(t)\|)\geqslant0  \quad \mbox{for} \quad 0 \leqslant t <t_1^{*}
		\]
		\[
		J(\|v_{m}(t)\|) \geqslant 0 \quad \mbox{for} \quad 0 \leqslant t <t_2^{*}.
		\]
		Assume $t_1^{*} \leqslant t_2^{*}$. Similar arguments if $t_2^{*} \leqslant t_1^{*}$. Return to expression \eqref{Zc5}. Then
		\[
		\dfrac{1}{4} \|u_{m}(t)\|^2 + J(\|u_{m}(t)\|) +  \dfrac{1}{4}\|v_{m}(t)\|^2  + J(\|v_{m}(t)\|) \leqslant L + \eta < \frac{1}{4}(\lambda^{*})^2  \quad 0 \leqslant t <t_1^{*}
		\]
		So
		\[
		\dfrac{1}{4} \|u_{m}(t)\|^2 \leqslant L + \eta < \frac{1}{4}(\lambda^{*})^2  \quad  0 \leqslant t <t_1^{*},~~\forall m\geqslant m_0 .
		\]
		Taking the limit as $t \to t_1^{*}, ~ 0 < t <t_1^{*}$ in this inequality we obtain a contradiction. This prove the part $1$ of \eqref{possibility}.
		
		The proof of possibility $2)$ of \eqref{possibility} follows by applying the arguments used in part $1)$ to $\|u_{m}(t_1)\| \geqslant \lambda^{*}$ and this conclude the proof of the lemma.
	\eproof
	
	By by Lemma \eqref{Lc1} we have
	$$
	\|u_{m}(t)\| < \lambda^{*}, \quad \|v_{m}(t)\| < \lambda^{*} \quad \forall ~ 0 \leqslant t < \infty, \quad \forall ~ m \geqslant m_0. 
	$$
	Consequently
	$$
	J(\|u_{m}(t)\|) \geqslant 0,~J(\|v_{m}(t)\|)\geqslant0, \quad \forall ~t \in [0,\infty).
	$$
	Therefore
	\begin{equation}\label{Dc5}
	\begin{split}
	&\dfrac{1}{2}|u'_{m}(t)|^2 +\dfrac{1}{4} \|u_{m}(t)\|^2+ \dfrac{1}{2} |v'_{m}(t)|^2 + \dfrac{1}{4}\|v_{m}(t)\|^2  
	\\
	&+\delta_0\int_{0}^{t}\int_{\Gamma_1}[u'_{m}(s)]^2d\Gamma ds+\delta_0\int_{0}^{t}\int_{\Gamma_1}[v'_{m}(s)]^2d\Gamma ds
	\leqslant L + \eta < \dfrac{1}{4}(\lambda^{*})^2,
	\end{split}
	\end{equation}
	for all $t \in [0, \infty)$ and for all $m \geqslant m_0$. By $\eqref{Dc5}$ we obtain
	\begin{equation}\label{EEEc10}
	\left|
	\begin{array}{l}
	(u_{m}), (v_{m})~\mbox{are bounded in } ~L^{\infty}(0, \infty; V), ~\forall m\geqslant m_0 ;\vspace{0.2cm}\\
	(u'_{m}), (v'_{m})~\mbox{are bounded in } ~L^{\infty}(0, \infty; L^2(\Omega)),~\forall m\geqslant m_0; \vspace{0.2cm}\\
	(u'_{m}), (v'_{m})~\mbox{are bounded in} ~L^{2}(0, \infty; L^2(\Gamma_1)), ~\forall m\geqslant m_0.
	\end{array}
	\right.
	\end{equation}
	With similar arguments used in the Remark \ref{obs1} we obtain
	$$
	\| |u_{m}(t)|^{\rho}  |v_{m}(t)|^{\rho}v_{m}(t)|\|_{L^{q'}(\Omega)}  \leqslant C, \quad \forall ~ m \geqslant m_0.
	$$
	where the constant $C>0$ is independent of $t$ and $m.$ It follows that
	\begin{equation}\label{Cc2}
	(|u_{m}|^{\rho}|v_{m}|^{\rho}v_{m})~\mbox{is bounded in } ~L^{\infty}(0, \infty; L^{q'}(\Omega))), \quad \forall ~ m \geqslant m_0.
	\end{equation}
	In similar way, we find
	\begin{equation}\label{Cc21}
	(|u_{m}|^{\rho}u_{m}|v_{m}|^{\rho})~\mbox{is bounded in } ~L^{\infty}(0, \infty; L^{q'}(\Omega))), \quad \forall ~ m \geqslant m_0.
	\end{equation}
	{\bf Passage to the limit in} $m$.
	Estimates \eqref{EEEc10}, \eqref{Cc2} and \eqref{Cc21} allow us, by induction and diagonal process, to obtain a subsequences of $(u_{m})$ and $(v_{m})$, still denoted by $(u_{m})$ and $(v_{m})$, and functions $u,v: \Omega \times [0,\infty) \to \mathbb{R}$, such that
	
	\begin{equation}\label{Zc7}
	\left|
	\begin{array}{l}
	u_{m} \to u \quad \mbox{and} \quad  v_{m} \to v \quad \mbox{weak star in} \quad  L^{\infty}(0,\infty, V), \vspace{0.3cm}\\
	u'_{m} \to u' \quad \mbox{and} \quad  v'_{m} \to v' \quad \mbox{weak star in} \quad  L^{\infty}(0,\infty, L^2(\Omega)), \vspace{0.3cm}\\
	u'_{m} \to u' \quad \mbox{and} \quad  v'_{m} \to v' \quad \mbox{weak in} \quad  L^{2}(0,\infty, L^2(\Gamma_1)), \vspace{0.3cm}\\
	|u_{m}|^{\rho}|v_{m}|^{\rho}v_{m} \to \xi \quad \mbox{weak star in} \quad  L^{\infty}(0,\infty, L^{q'}(\Omega)),  \vspace{0.3cm}\\
	|u_{m}|^{\rho}u_{m}|v_{m}|^{\rho} \to \zeta \quad \mbox{weak star in} \quad  L^{\infty}(0,\infty, L^{q'}(\Omega)). 
	\end{array}
	\right.
	\end{equation}
	
	We must show that $\xi = |u|^{\rho}|v|^{\rho}v$ and $\zeta = |u|^{\rho}u|v|^{\rho}$. 
	
	Consider $T>0$ fixed but arbitrary. By convergences $\eqref{Zc7}_1$ and $\eqref{Zc7}_2$ and noting that
	$V \stackrel{compact}\hookrightarrow L^2(\Omega) $, we obtain by Aubin-Lions Theorem that there are subsequences of $(u_{m}), ~(v_{m})$, which we still denoted by $(u_{m}), ~(v_{m})$, respectively, such that 
	\begin{equation}\label{Zc8}
	\left|
	\begin{array}{l}
	u_{m} \to u \quad \mbox{strong in} \quad   L^{\infty}(0,T, L^2(\Omega)),  \vspace{0.3cm} 
	\\
	v_{m} \to v \quad \mbox{strong in} \quad  L^{\infty}(0,T, L^2(\Omega)).
	\end{array}
	\right.
	\end{equation}
	By \eqref{Zc8} there are subsequences of $(u_{m})$ and $(v_{m})$ such that 
	\begin{equation}\label{Zc9}
	\left|
	\begin{array}{l}
	u_{m} \to u \quad \mbox{a.e. in} \quad  \Omega \times (0,T), \vspace{0.3cm} 
	\\
	v_{m} \to v \quad \mbox{a.e. in} \quad  \Omega \times (0,T).
	\end{array}
	\right.
	\end{equation}
	By \eqref{Zc9} we have that
	\begin{equation*}\label{Zc10}
	\left|
	\begin{array}{l}
	|u_{m}|^{\rho} \to |u|^{\rho} \quad \mbox{a.e. in} \quad   \Omega \times (0,T), \vspace{0.3cm} 
	\\
	|v_{m}|^{\rho}v_{m}  \to |v|^{\rho}v \quad \mbox{a.e. in} \quad  \Omega \times (0,T).
	\end{array}
	\right.
	\end{equation*}
	Therefore
	\begin{equation}\label{Zc11}
	|u_{m}|^{\rho}|v_{m}|^{\rho}v_{m}  \to |u|^{\rho}|v|^{\rho}v  \quad \mbox{a.e. in} \quad \Omega \times (0,T).
	\end{equation}
	From $\eqref{Cc2},$ $\eqref{Zc11}$ and of Lions' Lemma we obtain
	\begin{equation*}
	|u_{m}|^{\rho}|v_{m}|^{\rho}v_{m}  \to |u|^{\rho}|v|^{\rho}v  \quad \mbox{weak in} \quad  L^{2}(0,T,L^{q'}(\Omega)) .
	\end{equation*}
	In similar way, we find
	\begin{equation*}
	|u_{m}|^{\rho}u_{m}|v_{m}|^{\rho}  \to |u|^{\rho}u|v|^{\rho}\quad \mbox{weak in}  \quad   L^{2}(0,T,L^{q'}(\Omega)) .
	\end{equation*}
	By a diagonal process we obtain
	\begin{equation}\label{Zc12}
	|u_{m}|^{\rho}|v_{m}|^{\rho}v_{m}  \to |u|^{\rho}|v|^{\rho}v  \quad \mbox{weak star in} \quad  L_{loc}^{\infty}(0,\infty,L^{q'}(\Omega)) .
	\end{equation}
	In similar way, we find
	\begin{equation}\label{Zc121}
	|u_{m}|^{\rho}u_{m}|v_{m}|^{\rho}  \to |u|^{\rho}u|v|^{\rho}\quad \mbox{weak star in}  \quad   L_{loc}^{\infty}(0,\infty,L^{q'}(\Omega)) .
	\end{equation}
	By \eqref{Zc7} and \eqref{Zc12}, \eqref{Zc121} we have $\xi=|u|^{\rho}|v|^{\rho}v$ and $\zeta=|u|^{\rho}u|v|^{\rho}$.
	
	Multiplying both sides of the approximate equation $\eqref{EEEc1}_1$ by $\varphi \in \mathcal{D}(0,\infty)$,
	integrating in $[0,\infty)$, using the convergences \eqref{Zc7} and noting that $V_m$ is dense in $V$ we obtain 
	\begin{equation}\label{aca}
	\begin{split}
	&\int_{0}^{\infty}(u''(t), w) \varphi(t) dt  +\int_{0}^{\infty} ((u(t), w)) \varphi(t) dt +\int_{0}^{\infty} \int_{\Gamma_1}\delta u'(t) w \varphi(t) d\Gamma dt 
	\\ 
	&+ \int_{0}^{\infty} ( |u(t)|^{\rho}|v(t)|^{\rho}v(t),w ) \varphi(t) dt = 0,~ \forall  w \in V, ~\forall \varphi \in \mathcal{D}(0,\infty).
	\end{split}
	\end{equation}
	Since $V$ is dense in $L^2(\Omega)$ it follows that \eqref{aca} is true for all $v \in L^2(\Omega)$.
	
	In similar way, we find
	\begin{equation*}
	\begin{split}
	&\displaystyle\int_{0}^{\infty}(v''(t), z)\varphi(t) dt  +\int_{0}^{\infty} ((v(t), z)) \varphi(t) dt + \int_{0}^{\infty} \int_{\Gamma_1}\delta u'(t) z \varphi(t) d\Gamma dt
	\\ 
	&+ \displaystyle\int_{0}^{\infty} ( |u(t)|^{\rho}u(t)|v(t)|^{\rho},z )\varphi(t)dt = 0, ~ \forall  z \in V,~\forall \varphi \in \mathcal{D}(0,\infty).
	\end{split}
	\end{equation*}
	Taking in \eqref{aca} $w \in \mathcal{D}(\Omega) \subset V$, it follows that
	\begin{equation}\label{Gc}
	u''-\Delta u + |u|^{\rho}|v|^{\rho}v = 0 \quad \mbox{in} \quad \mathcal{D}'(\Omega \times(0,\infty)).
	\end{equation}
	In similar way
	\[
	v''-\Delta v + |u|^{\rho}u|v|^{\rho} = 0 \quad \mbox{in} \quad \mathcal{D}'(\Omega \times(0,\infty)).
	\]
Let $T>0$ fix. Note that $u' \in L^{2}(0,T;L^2(\Omega))\hookrightarrow L^{2}(0,T;L^{q'}(\Omega))$ then $u'' \in H^{-1}(0,T;L^{q'}(\Omega))$. Since $|u|^{\rho}|v|^{\rho}v \in L^{2}(0,T;L^{q'}(\Omega))\hookrightarrow H^{-1}(0,T;L^{q'}(\Omega))$ then by \eqref{Gc} we have 
$$
-\Delta u \in H^{-1}(0,T;L^{q'}(\Omega)).
$$ 
Therefore
	\begin{equation*}\label{e1}
	u''-\Delta u + |u|^{\rho}|v|^{\rho}v = 0 \quad \mbox{in} \quad H^{-1}(0,T;L^{q'}(\Omega)).
	\end{equation*}
	In similar way
	\[
	v''-\Delta v + |u|^{\rho}u|v|^{\rho} = 0 \quad \mbox{in} \quad H^{-1}(0,T;L^{q'}(\Omega)).
	\]
	As $u \in L^{2}(0,T;V)$ and $\Delta u \in H^{-1}(0,T;L^{q'}(\Omega))$ then by Theorem \ref{tracoX} with $\theta=q'$ we obtain 
	\begin{equation}\label{tracoc}
	\frac{\partial u}{\partial \nu} \in H^{-1}((0,T;W^{-\frac{1}{q'},q'}(\Gamma_1))
	\end{equation}
	Multiplyng both sides of \eqref{Gc} by $w\varphi$ with $w \in L^2(\Omega)$ and $\varphi \in \mathcal{D}(0,\infty)$, integrating on $\Omega \times (0,\infty)$ and using \eqref{tracoc} and Green's formula
	\begin{equation*}
	\begin{split}
	&\displaystyle\int_{0}^{\infty}(u''(t), w) \varphi(t) dt  +\int_{0}^{\infty} ((u(t), w)) \varphi(t) dt - \int_{0}^{\infty} \left\langle \frac{\partial u(t)}{\partial \nu},w \right\rangle \varphi(t)d\Gamma dt
	\\ 
	&+ \displaystyle\int_{0}^{\infty} ( |u(t)|^{\rho}u(t)|v(t)|^{\rho},w )\varphi(t) dt = 0, ~ \forall  w \in V ~\forall \varphi \in \mathcal{D}(0,\infty),
	\end{split}
	\end{equation*}
	where $\langle \cdot,\cdot\rangle$ denotes the duality paring between $W^{-\frac{1}{q'},q'}(\Gamma_1)$ and $W^{\frac{1}{q'},q}(\Gamma_1)$. Comparing this lest equation with \eqref{aca}, we obtain
	\[
	\int_{0}^{\infty}\left\langle \frac{\partial u(t)}{\partial \nu}+\delta u'(t),w \right\rangle \varphi(t) d\Gamma dt=0, ~ \forall  w \in L^2(\Omega) ~\forall \varphi \in \mathcal{D}(0,\infty)
	\]
	Therefore
	\[
	\frac{\partial u}{\partial \nu}+\delta u'=0 \quad \mbox{in} \quad W^{-\frac{1}{q'},q'}(\Gamma_1).
	\]
	In similar way
	\[
	\frac{\partial v}{\partial \nu}+\delta v'=0 \quad \mbox{in} \quad W^{-\frac{1}{q'},q'}(\Gamma_1).
	\]
	Snce $\delta u' \in L^{2}(0,\infty, L^2(\Gamma_1))$, then
	\begin{equation*}\label{f1c}
	\frac{\partial u}{\partial \nu}+\delta u'=0 \quad \mbox{in} \quad L^{2}(0,T;L^{2}(\Gamma_1)).
	\end{equation*}
	In similar way
	\begin{equation*}\label{f2c}
	\frac{\partial v}{\partial \nu}+\delta v'=0 \quad \mbox{in} \quad L^{2}(0,T;L^{2}(\Gamma_1)).
	\end{equation*}
	{\bf Initial Conditions.} Using a standard argument, we can verify the initial conditions.

\subsection{Proof of Theorem \ref{theorem2}}

As the saparable space $W_{\Gamma_0}^{1,\theta'}(\Omega)$ is dense in $V$ and dense in $L^{\theta'}(\Omega)$ and $W_{\Gamma_0}^{1,\theta'}(\Omega)\hookrightarrow V\cap L^{\theta'}(\Omega)$ by Proposition \ref{p1} and Proposition \ref{p2} we have that $V\cap L^{\theta'}(\Omega)$ is a separable Banach space. Thus, taking a basis $(w_l)_{l \in \mathbb{N}}$ in $V\cap L^{\theta'}(\Omega)$ where $\dfrac{1}{\theta}+\dfrac{1}{\theta'}=1$ and using similar arguments to those used in the Theorem \ref{theorem1} we show the Theorem \ref{theorem2}.

\subsection{Proof of Theorem \ref{theorem3}}
The proof of Theorem \ref{theorem3} will be done by applying the Faedo-Galerkin method with a special basis of $V\cap H^2(\Omega)$. But to this we need of the following proposition.
\begin{proposition}\label{propos}
	Let us consider $f\in L^2(\Omega)$ and $g \in H^{\frac{1}{2}}(\Gamma_1)$. Then, the solution $u$ of the boundary value problem:
	\begin{equation*}
	\left|
	\begin{array}{l}
	-\Delta w=f \quad \mbox{in} \quad \Omega \\
	w=0 \quad \mbox{on} \quad \Gamma_0 \\
	\dfrac{\partial w}{\partial \nu}=g \quad \mbox{on} \quad \Gamma_1
	\end{array}
	\right.
	\end{equation*}
	belongs to $V \cap H^2(\Omega)$ and 
	\[
	\|w\|_{H^2(\Omega)} \leqslant [|f|+\|g\|_{H^{\frac{1}{2}}(\Gamma_1)}].
	\]
\end{proposition}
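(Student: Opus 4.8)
The plan is to produce a variational solution in $V$ first, and then raise its regularity to $H^2(\Omega)$ by a standard localization argument that crucially exploits the standing hypothesis $\overline{\Gamma}_0\cap\overline{\Gamma}_1=\emptyset$.

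First I would work with the bilinear form $a(w,z)=\sum_{i=1}^n\int_\Omega \frac{\partial w}{\partial x_i}\frac{\partial z}{\partial x_i}\,dx$ on $V\times V$. Since $\Gamma_0$ has positive measure, Poincar\'e's inequality makes $a$ coercive, $a(z,z)=\|z\|^2\geqslant c\|z\|_{H^1(\Omega)}^2$, and it is plainly bounded. The functional $z\longmapsto (f,z)+\int_{\Gamma_1}g\,\gamma_0z\,d\Gamma$ is continuous on $V$: indeed $|(f,z)|\leqslant|f|\,|z|$, and by the trace theorem $V\hookrightarrow H^{1/2}(\Gamma_1)\hookrightarrow L^2(\Gamma_1)$ one has $\big|\int_{\Gamma_1}g\,\gamma_0z\,d\Gamma\big|\leqslant\|g\|_{L^2(\Gamma_1)}\|\gamma_0 z\|_{L^2(\Gamma_1)}\leqslant C\|g\|_{H^{1/2}(\Gamma_1)}\|z\|$. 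Lax--Milgram then yields a unique $w\in V$ with $a(w,z)=(f,z)+\int_{\Gamma_1}g\,\gamma_0z\,d\Gamma$ for all $z\in V$, together with the bound $\|w\|\leqslant C[\,|f|+\|g\|_{H^{1/2}(\Gamma_1)}\,]$. Testing with $z\in\mathcal{D}(\Omega)\subset V$ gives $-\Delta w=f$ in $\mathcal{D}'(\Omega)$, so $w\in\mathcal{X}$ for $\theta=2$; Theorem \ref{tracoX} and the Green formula \eqref{green} then identify $\frac{\partial w}{\partial\nu}=g$ on $\Gamma_1$, a priori in $W^{-1/2,2}(\Gamma_1)=H^{-1/2}(\Gamma_1)$.

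Next I would upgrade $w$ to $H^2(\Omega)$. Because $\overline{\Gamma}_0\cap\overline{\Gamma}_1=\emptyset$, one can pick a finite open cover $\{O_j\}$ of $\overline{\Omega}$ and a subordinate $C^\infty$ partition of unity $\{\varphi_j\}$ in which every patch is of exactly one of three types: interior ($\overline{O_j}\subset\Omega$), Dirichlet ($\overline{O_j}\cap\Gamma\subset\Gamma_0$ and $\overline{O_j}\cap\Gamma_1=\emptyset$), or Neumann ($\overline{O_j}\cap\Gamma\subset\Gamma_1$ and $\overline{O_j}\cap\Gamma_0=\emptyset$). For each $j$ the function $\varphi_j w$ solves $-\Delta(\varphi_j w)=\varphi_j f-2\nabla\varphi_j\cdot\nabla w-(\Delta\varphi_j)w\in L^2(\Omega)$, with a homogeneous Dirichlet or an inhomogeneous Neumann condition (datum in $H^{1/2}$) on the relevant boundary piece, or no boundary at all. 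On interior patches this is the classical interior $H^2$ estimate; on Dirichlet and Neumann patches one flattens the boundary with a $C^2$ diffeomorphism --- the $C^2$ regularity of $\Gamma$ being exactly what keeps an $L^2$ right-hand side and an $H^2$ solution --- and uses tangential difference quotients to bound all second derivatives except the purely normal one, which is then read off algebraically from the equation. In each case $\|\varphi_j w\|_{H^2(\Omega)}\leqslant C[\,|f|+\|g\|_{H^{1/2}(\Gamma_1)}+\|w\|\,]$.

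Summing over $j$ gives $w\in V\cap H^2(\Omega)$ with $\|w\|_{H^2(\Omega)}\leqslant C[\,|f|+\|g\|_{H^{1/2}(\Gamma_1)}+\|w\|\,]$, and inserting the $H^1$ bound from the first step produces the asserted estimate $\|w\|_{H^2(\Omega)}\leqslant C[\,|f|+\|g\|_{H^{1/2}(\Gamma_1)}\,]$ (the multiplicative constant being the one understood in the statement). The one delicate point is precisely the mixed nature of the boundary conditions: for a generic decomposition $\Gamma=\Gamma_0\cup\Gamma_1$ the solution would only be $H^{3/2-\varepsilon}$ near the interface $\overline{\Gamma}_0\cap\overline{\Gamma}_1$, but the hypothesis that this interface is empty guarantees that no patch in the cover sees both conditions, so the pure-Dirichlet and pure-Neumann $H^2$ theory applies without modification. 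The difference-quotient computations, the boundary flattening, and the partition-of-unity bookkeeping are all routine.
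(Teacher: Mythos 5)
The paper itself does not prove Proposition \ref{propos}: its ``proof'' is a one-line citation to Milla Miranda and Medeiros \cite{MMLA1996}, so there is no in-text argument to compare yours against. Your proof is the standard one and is correct: Lax--Milgram on $V$ (coercivity from Poincar\'e since $\Gamma_0$ has positive measure, continuity of the boundary term via the trace embedding $V\hookrightarrow L^{2}(\Gamma_1)$) produces the variational solution together with the $H^1$ bound, and the upgrade to $H^2(\Omega)$ by partition of unity, boundary flattening and tangential difference quotients works precisely because the hypothesis $\overline{\Gamma}_0\cap\overline{\Gamma}_1=\emptyset$ lets every patch see either a pure Dirichlet or a pure Neumann problem on a $C^2$ boundary --- you correctly identify this as the point where a genuinely mixed interface would destroy $H^2$ regularity. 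Two details worth making explicit if you write the argument out in full: on a Neumann patch the localized conormal datum is $\varphi_j g+\bigl(\partial\varphi_j/\partial\nu\bigr)\gamma_0 w$, and you need $\gamma_0 w\in H^{\frac{1}{2}}(\Gamma_1)$ (supplied by the trace theorem) to keep that datum in $H^{\frac{1}{2}}(\Gamma_1)$; and the estimate as printed in the statement carries no multiplicative constant, which, as you observe, must be read as implicit. In all likelihood your argument coincides with the one in the cited reference.
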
	
\begin{proof}
	For the proof see Milla Miranda and Medeiros \cite[Proposition 1, p 49]{MMLA1996}. 
\end{proof}
\begin{proposition}\label{propos1}
	Suppose $u^0 \in V\cap H^{2}(\Omega), ~ u^1 \in V$ and 
	\[
	\frac{\partial u^0}{\partial \nu}+\delta(x) u^1=0 \quad \mbox{on} \quad \Gamma_1.
	\]
	Then, for each $\varepsilon$, there exists $w$ and $z$ in $V\cap H^{2}(\Omega)$ such that:
	\[
	\|w-u^0\|_{V\cap H^{2}(\Omega)} < \varepsilon,\quad \|z-u^1\| < \varepsilon \quad \mbox{and} \quad \frac{\partial w}{\partial \nu}+\delta(x) z=0 \quad \mbox{on} \quad \Gamma_1.
	\]
\end{proposition}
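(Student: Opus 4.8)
The plan is to keep the elliptic compatibility relation \emph{exact} by constructing $w$ out of $z$, rather than perturbing $u^0$ and $u^1$ independently. First I choose $z\in V\cap H^2(\Omega)$ approximating the lower order datum $u^1$ in $V$ (smoothing a datum of $V$ is cheap); then I define $w$ as the solution of the mixed elliptic problem whose Neumann trace on $\Gamma_1$ is prescribed to be $-\delta z$. The identity $\frac{\partial w}{\partial\nu}+\delta z=0$ then holds by construction, and the only remaining point is that $w$ has not moved far from $u^0$, which follows because the difference solves a homogeneous elliptic problem whose boundary datum is controlled by $\|u^1-z\|$.

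In detail: since $\overline{\Gamma}_0\cap\overline{\Gamma}_1=\emptyset$ and $\Gamma$ is of class $C^2$, the space $V\cap H^2(\Omega)$ is dense in $V$ (for instance, the linear span of the eigenfunctions of $-\Delta$ with the mixed Dirichlet--Neumann conditions is dense in $V$, and those eigenfunctions lie in $H^2(\Omega)$ by elliptic regularity). Given $\varepsilon>0$, pick $z\in V\cap H^2(\Omega)$ with $\|z-u^1\|<\mu$, where $\mu\in(0,\varepsilon]$ will be fixed at the end; observe that $z|_{\Gamma_1}\in H^{3/2}(\Gamma_1)$ and $\delta\in W^{1,\infty}(\Gamma_1)$, so $\delta z\in H^{1}(\Gamma_1)\subset H^{1/2}(\Gamma_1)$. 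Now set $f=-\Delta u^0\in L^2(\Omega)$ (legitimate since $u^0\in H^2(\Omega)$) and $g=-\delta z\in H^{1/2}(\Gamma_1)$, and let $w$ be the solution of
\[
\left|\begin{array}{l}
-\Delta w=f \quad\mbox{in}\quad \Omega,\\
w=0 \quad\mbox{on}\quad \Gamma_0,\\
\dfrac{\partial w}{\partial\nu}=g \quad\mbox{on}\quad \Gamma_1.
\end{array}\right.
\]
By Proposition \ref{propos}, $w\in V\cap H^2(\Omega)$ and, by construction, $\dfrac{\partial w}{\partial\nu}+\delta z=0$ on $\Gamma_1$, which is the required boundary identity.

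It then remains to estimate $w-u^0\in V\cap H^2(\Omega)$. Using the hypothesis $\dfrac{\partial u^0}{\partial\nu}+\delta u^1=0$ on $\Gamma_1$, the difference solves
\[
\left|\begin{array}{l}
-\Delta(w-u^0)=0 \quad\mbox{in}\quad \Omega,\\
w-u^0=0 \quad\mbox{on}\quad \Gamma_0,\\
\dfrac{\partial(w-u^0)}{\partial\nu}=-\delta z-\dfrac{\partial u^0}{\partial\nu}=\delta(u^1-z) \quad\mbox{on}\quad \Gamma_1.
\end{array}\right.
\]
Applying Proposition \ref{propos} with zero interior datum and boundary datum $\delta(u^1-z)$, then using the equivalence of $\|\cdot\|_{H^2(\Omega)}$ and $\|\cdot\|_{V\cap H^2(\Omega)}$ on $V\cap H^2(\Omega)$, the boundedness of multiplication by $\delta\in W^{1,\infty}(\Gamma_1)$ on $H^{1/2}(\Gamma_1)$, and the continuity of the trace $V\to H^{1/2}(\Gamma_1)$ (Poincar\'e's inequality making $\|\cdot\|$ a norm on $V$ equivalent to the $H^1$-norm), one gets a constant $C>0$ depending only on $\Omega,\Gamma_0,\Gamma_1,\delta$ with
\[
\|w-u^0\|_{V\cap H^2(\Omega)}\leqslant C\,\|\delta(u^1-z)\|_{H^{1/2}(\Gamma_1)}\leqslant C\,\|u^1-z\|<C\mu.
\]
Choosing $\mu=\min\{\varepsilon,\varepsilon/C\}$ in the first step yields simultaneously $\|z-u^1\|<\varepsilon$ and $\|w-u^0\|_{V\cap H^2(\Omega)}<\varepsilon$, with $w,z\in V\cap H^2(\Omega)$; this proves the proposition.

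As for difficulties: once Proposition \ref{propos} is available the argument is essentially bookkeeping of constants, and the two external inputs are standard, namely the density of $V\cap H^2(\Omega)$ in $V$ (where the hypothesis $\overline{\Gamma}_0\cap\overline{\Gamma}_1=\emptyset$ is exactly what excludes corner singularities obstructing $H^2$ regularity) and the multiplier property of $W^{1,\infty}(\Gamma_1)$ on $H^{1/2}(\Gamma_1)$. The one genuine idea — as opposed to an obstacle — is to transfer the approximation from $u^1$, where it is easy, onto $u^0$ through the elliptic solution operator, so that the compatibility relation is preserved identically rather than only approximately.
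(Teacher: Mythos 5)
Your proof is correct, and it is the standard argument: the paper itself does not prove Proposition \ref{propos1} but only cites Milla Miranda--Medeiros, and the construction you give (choose $z\in V\cap H^2(\Omega)$ close to $u^1$ in $V$, then define $w$ by solving the mixed elliptic problem with interior datum $-\Delta u^0$ and Neumann datum $-\delta z$, so that the compatibility condition holds exactly and $w-u^0$ is controlled via the a priori estimate of Proposition \ref{propos}) is precisely the scheme of that reference. The two auxiliary facts you invoke --- density of $V\cap H^2(\Omega)$ in $V$ (valid here because $\overline{\Gamma}_0\cap\overline{\Gamma}_1=\emptyset$ and $\Gamma$ is $C^2$) and the multiplier property of $W^{1,\infty}(\Gamma_1)$ on $H^{1/2}(\Gamma_1)$ --- are both standard and correctly used.
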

\begin{proof}
	For the proof see Milla Miranda and Medeiros \cite[Proposition 3, p. 50]{MMLA1996}.
\end{proof}
{\bf Approximate Problem}. From Proposition \ref{propos1}, we obtain sequences $(u_k^0), (v_k^0)$ and $(u_k^1), (v_k^1)$ of vectores of $V\cap H^2(\Omega)$ such that 

\begin{equation}\label{c1}
\begin{split}
&u_k^0 \to u^0 \quad \mbox{in} \quad \mbox{and} \quad v_k^0 \to v^0 \quad \mbox{in} \quad V \cap H^2(\Omega) \\
&u_k^1 \to u^1 \quad \mbox{in} \quad \mbox{and} \quad v_k^1 \to v^1 \quad \mbox{in} \quad V,
\end{split}
\end{equation}
and
\begin{equation*}\label{c2}
\frac{\partial u_k^0}{\partial \nu}+\delta u_k^1=0 \quad \mbox{and} \quad\frac{\partial v_k^0}{\partial \nu}+\delta v_k^1=0 \quad \mbox{on} \quad \Gamma_1 \quad \mbox{for all} \quad k \in \mathbb{N}.
\end{equation*}

Now we fix $k \in \mathbb{N}$ and consider the basis $\{w_1^k,w_2^k,w_3^k,w_4^k, \ldots\}$ of $V\cap H^2(\Omega)$ such that $u_k^0, v_k^0,u_k^1$ and $v_k^1$ belong to the subspace $[w_1^k,w_2^k,w_3^k,w_4^k]$ spanned by $w_1^k,w_2^k,w_3^k$ and $w_4^k$. For each $m \in \mathbb{N}$ we built the subspace $V_{m}^k=[w_1^k,w_2^k, \ldots, w_m^k]$. Consider 
\[
u_{km}(t)=\sum_{j=1}^mg_{jkm}(t)w_j^k, \quad  \quad v_{km}(t)=\sum_{j=1}^mh_{jkm}(t)w_j^k 
\]
such that $u_{km}$ and $v_{km}$ are approximate solutions of problem \eqref{int}, that is,
\begin{equation}\label{EEE1}
\left|
\begin{array}{l}
(u''_{km}(t), w) + ((u_{km}(t), w)) + \displaystyle\int_{\Gamma_1} \delta u'_{km}(t)w d\Gamma+ \displaystyle\int_{\Omega}|u_{km}(t)||v_{km}(t)|v_{km}(t) w dx =0 \vspace{0.3cm} \\
(v''_{km}(t), z) + ((v_{km}(t), z)) +\displaystyle\int_{\Gamma_1} \delta v'_{km}(t)z d\Gamma+ \displaystyle\int_{\Omega}|u_{km}(t)|u_{km}(t)|v_{km}(t)| z dx =0, \vspace{0.3cm}\\
u_{km}(0)=u_k^{0}, \quad u'_{km}(0)=u_k^{1} \vspace{0.3cm} \\
v_{km}(0)=v_k^{0}, \quad v'_{km}(0)=v_k^{1} 
\end{array}
\right.
\end{equation}
for all $w,z \in V_m^k$.  

The above finite-dimensional system has solutions $\{u_{km}(t), v_{km}(t)\}$ defined on $[0,t_{km})$. The following estimate allows us to extend this solution to the interval $[0,\infty)$.  
\begin{remark}
	Using similar arguments used in Theorem \ref{theorem1} with $\rho=1$ we prove that the integrals
	\[
	\int_{\Omega}|u_{km}(t)||v_{km}(t)|v_{km}(t)w dx \quad \mbox{and} \quad \int_{\Omega}|u_{km}(t)|u_{km}(t)|v_{km}(t)|z dx.
	\]
	makes sense. 
\end{remark}

{\bf First estimate.} To obtain the first estimate we apply similar arguments used in Theorem \ref{theorem1} with $\rho=1$. In this case, we replace the function $J$ by
\begin{equation*}\label{J1}
J_1(\lambda)= \frac{1}{4}\lambda^2-N_1\lambda^4,
\end{equation*}
where $N_1$ was defined in \eqref{N}. We also obtain the following lemma
\begin{lemma}\label{L1}
	Consider $u_0,  v_0 \in V\cap H^2(\Omega)$ and $u_1,  v_1 \in V$ such that 
	$$
	\|u_0\|,\ \|v_0\| < \lambda_1^{*} 
	$$
	and
	$$
	L_1 < \dfrac{1}{4}(\lambda_1^{*})^2
	$$
	where $\lambda_1^{*}$ and $L_1$ were defined, respectively, in \eqref{H21} and \eqref{H22}.
	Then
	\begin{equation*}\label{o}
	\|u_{km}(t)\|< \lambda_1^{*} \quad \text{and} \quad \|u_{km}(t)\| < \lambda_1^{*}, \quad \forall t \in [0, t_{km}),\ \forall k \geqslant k_0 ~\forall m.
	\end{equation*}
\end{lemma}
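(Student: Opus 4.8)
The plan is to transcribe the proof of Lemma \ref{Lc1} to the present setting, i.e. with the special basis $\{w_j^k\}$ of $V\cap H^2(\Omega)$ and with $\rho=1$ (so that the exponent $2(\rho+1)$ becomes $4$). Fix $k$ and $m$. Since $u'_{km}(t),v'_{km}(t)\in V_m^k$, they are admissible test functions in $(\ref{EEE1})_1$ and $(\ref{EEE1})_2$; using $\frac{d}{dt}(|u_{km}|u_{km})=2|u_{km}|u'_{km}$ (and likewise for $v_{km}$) and adding the two resulting identities gives
\[
\frac12\frac{d}{dt}\Big\{|u'_{km}(t)|^2+\|u_{km}(t)\|^2+|v'_{km}(t)|^2+\|v_{km}(t)\|^2\Big\}+\int_{\Gamma_1}\delta\big([u'_{km}(t)]^2+[v'_{km}(t)]^2\big)d\Gamma+\frac12\frac{d}{dt}\int_{\Omega}(|u_{km}(t)|u_{km}(t))(|v_{km}(t)|v_{km}(t))\,dx=0.
\]
Integrating on $(0,t)$ with $t<t_{km}$ and using $\delta(x)\geqslant\delta_0>0$ produces the integral inequality corresponding to \eqref{EEEc8} with $\rho=1$ and initial data $u_k^0,v_k^0,u_k^1,v_k^1$.

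Next I would bound the coupling term by H\"older's inequality and the embedding $V\hookrightarrow L^4(\Omega)$ of \eqref{H1}:
\[
\Big|\frac12\int_{\Omega}(|u_{km}(t)|u_{km}(t))(|v_{km}(t)|v_{km}(t))\,dx\Big|\leqslant\frac{c_1^4}{2}\|u_{km}(t)\|^2\|v_{km}(t)\|^2\leqslant\frac{c_1^4}{4}\big(\|u_{km}(t)\|^4+\|v_{km}(t)\|^4\big)\leqslant N_1\big(\|u_{km}(t)\|^4+\|v_{km}(t)\|^4\big),
\]
the last step holding because $N_1\geqslant\frac{c_1^4}{4}$ by \eqref{N}; the same bound applies to the initial-data coupling term with $u_k^0,v_k^0$ in place of $u_{km}(t),v_{km}(t)$. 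Substituting into the integrated identity, keeping the nonnegative boundary integral on the left, and writing $J_1(\lambda)=\frac14\lambda^2-N_1\lambda^4$, I obtain
\[
\frac12|u'_{km}(t)|^2+\frac14\|u_{km}(t)\|^2+J_1(\|u_{km}(t)\|)+\frac12|v'_{km}(t)|^2+\frac14\|v_{km}(t)\|^2+J_1(\|v_{km}(t)\|)+\delta_0\int_{0}^{t}\int_{\Gamma_1}\big([u'_{km}]^2+[v'_{km}]^2\big)d\Gamma\,ds\leqslant L_1^{km},
\]
where $L_1^{km}=\frac12(|u_k^1|^2+|v_k^1|^2)+\frac12(\|u_k^0\|^2+\|v_k^0\|^2)+N_1(\|u_k^0\|^4+\|v_k^0\|^4)$ is the approximate analog of $L_1$ in \eqref{H22}.

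Now I would use the convergences $\eqref{c1}$ together with the hypotheses $\|u^0\|,\|v^0\|<\lambda_1^*$ and $L_1<\frac14(\lambda_1^*)^2$: for a sufficiently small $\eta>0$ there is $k_0$ such that $\|u_k^0\|,\|v_k^0\|<\lambda_1^*$ and $L_1^{km}<L_1+\eta<\frac14(\lambda_1^*)^2$ for all $k\geqslant k_0$ and all $m$. Since $J_1(\lambda)=\lambda^2\big(\frac14-N_1\lambda^2\big)\geqslant0$ for $0\leqslant\lambda\leqslant\lambda_1^*=\big(\frac{1}{4N_1}\big)^{1/2}$ (the analog of Remark \ref{obs2}, via \eqref{H21}), the displayed inequality above yields, for $k\geqslant k_0$,
\[
\frac12|u'_{km}(t)|^2+\frac14\|u_{km}(t)\|^2+J_1(\|u_{km}(t)\|)+\frac12|v'_{km}(t)|^2+\frac14\|v_{km}(t)\|^2+J_1(\|v_{km}(t)\|)<L_1+\eta<\frac14(\lambda_1^*)^2.
\]
The conclusion then follows by the contradiction argument of Lemma \ref{Lc1}: if $\|u_{km}(t_1)\|\geqslant\lambda_1^*$ for some $t_1\in(0,t_{km})$ (the possibility that only $\|v_{km}(t_2)\|$ exceeds $\lambda_1^*$ being handled symmetrically), then by continuity of $t\mapsto\|u_{km}(t)\|$ and $\|u_{km}(0)\|=\|u_k^0\|<\lambda_1^*$ there is a smallest $t_1^*>0$ with $\|u_{km}(t_1^*)\|=\lambda_1^*$; defining $t_2^*$ analogously and assuming $t_1^*\leqslant t_2^*$, one has $\|u_{km}(t)\|,\|v_{km}(t)\|<\lambda_1^*$ hence $J_1(\|u_{km}(t)\|),J_1(\|v_{km}(t)\|)\geqslant0$ for $0\leqslant t<t_1^*$, so that dropping all terms but $\frac14\|u_{km}(t)\|^2$ in the last display gives $\frac14\|u_{km}(t)\|^2<\frac14(\lambda_1^*)^2$ for $t<t_1^*$, and letting $t\uparrow t_1^*$ produces $\frac14(\lambda_1^*)^2\leqslant L_1+\eta<\frac14(\lambda_1^*)^2$, a contradiction.

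I do not anticipate a genuine obstacle: the only novelty relative to Lemma \ref{Lc1} is the use of the special basis $\{w_j^k\}$, and this is harmless because $u'_{km}(t),v'_{km}(t)$ still lie in $V_m^k$ and the energy identity uses nothing about the structure of the basis; everything else is a direct transcription with $\rho=1$ and the constant $N_1$ of \eqref{N}. The one point worth double-checking carefully is the uniformity of $k_0$ in $m$, which holds because the smallness in $\eqref{c1}$ concerns only the index $k$.
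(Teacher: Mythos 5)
Your proposal is correct and follows essentially the same route as the paper, which itself only sketches this lemma by saying to repeat the argument of Lemma \ref{Lc1} with $\rho=1$, $J$ replaced by $J_1$, and $N$ replaced by $N_1$; your transcription (energy identity with $w=u'_{km}(t)\in V_m^k$, the bound $\tfrac{c_1^4}{4}(\|u\|^4+\|v\|^4)\leqslant N_1(\|u\|^4+\|v\|^4)$, and the contradiction argument via the first crossing time) is exactly what is intended. Your closing observation that $k_0$ is uniform in $m$ is also right, and in fact here it is even simpler than in Theorem \ref{theorem1}: the approximate initial data $u_k^0,v_k^0,u_k^1,v_k^1$ do not depend on $m$ at all, so the quantity you call $L_1^{km}$ depends only on $k$.
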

Therefore, we get 
\begin{equation}\label{EEE10}
\left|
\begin{array}{l}
(u_{km}), (v_{km})~\mbox{are bounded in } ~L^{\infty}(0, \infty; V); \quad \forall k\geqslant k_0, ~ \forall m\in \mathbb{N}\vspace{0.2cm}\\
(u'_{km}), (v'_{km})~\mbox{are bounded in } ~L^{\infty}(0, \infty; L^2(\Omega)),\quad \forall k\geqslant k_0, ~ \forall m\in \mathbb{N}\vspace{0.2cm}\\
(u'_{km}), (v'_{km})~\mbox{are bounded in} ~L^{2}(0, \infty; L^2(\Gamma_1)), \quad \forall k\geqslant k_0, ~ \forall m\in \mathbb{N}
\vspace{0.2cm}\\
(|u_{km}||v_{km}|v_{km})~\mbox{is bounded in } ~L^{\infty}(0, \infty; L^{2}(\Omega))), \quad \forall k\geqslant k_0, ~ \forall m\in \mathbb{N}.
\vspace{0.2cm}\\
(|u_{km}|u_{km}|v_{km}|)~\mbox{is bounded in } ~L^{\infty}(0, \infty; L^{2}(\Omega))), \quad \forall k\geqslant k_0, ~ \forall m\in \mathbb{N}.
\end{array}
\right.
\end{equation}

{\bf Second estimate.} Deriving $\eqref{EEE1}_1$ with respect to $t$, doing $w=u''_{km}(t)$ and as the function $F(\lambda)=|\lambda|, ~\lambda \in \mathbb{R}$ is Lipschitz continuous, using the \cite[Theorem A.3.12, Appendix--p.  35]{BC}  we obtain

\begin{equation}\label{se1}
\begin{split}
&\dfrac{1}{2} \dfrac{d}{dt}|u''_{km}(t)|^2 + \dfrac{1}{2} \dfrac{d}{dt}\|u'_{km}(t)\|^2 + \int_{\Gamma_1} \delta [u''_{km}(t)]^2d\Gamma \\
& \leqslant  \int_{\Omega}|u'_{km}(t)||u''_{km}(t)||v_{km}(t)|^2 dx 
+2 \int_{\Omega}|u_{km}(t)||u''_{km}(t)||v_{km}(t)||v'_{km}(t)| dx 
\end{split}
\end{equation} 
\\
Using the H\"{o}lder's inequality, the Sobolev embedding $V \hookrightarrow L^6(\Omega) $ and estimats \eqref{EEE10} we have
\begin{equation}\label{se2}
\begin{split}
\int_{\Omega}|u'_{km}(t)||u''_{km}(t)||v_{km}(t)|^{2} dx 
&\leqslant \|u'_{km}(t)\|_{L^6(\Omega)} |u''_{km}(t)| \|v_{km}(t)\|_{L^6(\Omega)}^2 \\
&  \leqslant C \|u'_{km}(t)\| |u''_{km}(t)|   \leqslant C(\|u'_{km}(t)\|^2 + |u''_{km}(t)|^2 ).
\end{split}
\end{equation} 
\\
Analogously we obtain
\begin{equation}\label{se3}
\int_{\Omega}|u_{km}(t)||u''_{km}(t)||v_{km}(t)| |v'_{km}(t)| dx \leqslant C(\|v'_{km}(t)\|^2 + |u''_{km}(t)|^2 ),
\end{equation} 	
where $C$ denote the several constants independent of $k$ and $m$.
Replecing \eqref{se2} and \eqref{se3} in \eqref{se1} and using the fact that $\delta(x) \geqslant \delta_0>0$ we have
\[
\dfrac{1}{2} \dfrac{d}{dt}|u''_{km}(t)|^2+ \dfrac{1}{2} \dfrac{d}{dt}\|u'_{km}(t)\|^2 +\delta_0 \int_{\Gamma_1}[u''_{km}(t)]^2d\Gamma \leqslant C(\|u'_{km}(t)\|^2 + \|v'_{km}(t)\|^2 + 2|u''_{km}(t)|^2).
\]
In similar way 
\[
\dfrac{1}{2} \dfrac{d}{dt}|v''_{km}(t)|^2+ + \dfrac{1}{2} \dfrac{d}{dt}\|v'_{km}(t)\|^2 +\delta_0 \int_{\Gamma_1}[v''_{km}(t)]^2d\Gamma \leqslant C(\|u'_{km}(t)\|^2 + \|v'_{km}(t)\|^2 + 2|v''_{km}(t)|^2).
\]
Adding the lest equalities above and integrating on $[0,t]$

\begin{equation}\label{se4}
\begin{split}
&\dfrac{1}{2}(|u''_{km}(t)|^2+|v''_{km}(t)|^2+ \|u'_{km}(t)\|^2 +\|v'_{km}(t)\|^2 ) + \delta_0 \int_{0}^{t}\int_{\Gamma_1}[u''_{km}(s)]^2d\Gamma ds \\
&+ \delta_0 \int_{0}^{t}\int_{\Gamma_1}[v''_{km}(s)]^2d\Gamma ds \leqslant \dfrac{1}{2}(|u''_{km}(0)|^2+|v''_{km}(0)|^2+ \|u_{k}^1\|^2 +\|v_{k}^1\|^2 ) \\
& + \int_{0}^{t} C(|u''_{km}(s)|^2+|v''_{km}(s)|^2+ \|u'_{km}(s)\|^2 +\|v'_{km}(s)\|^2 ) ds.
\end{split}
\end{equation}
We need to bound $|u''_{km}(0)|^2$ and $|v''_{km}(0)|^2$ by a constant independent of $k$ and $m$. This is one of the key points of the proof. These bounds are obtained thanks to the choice of the special basis of $V \cap H^2(\Omega)$. In fact, taking $t=0$ in $\eqref{EEE1}_1$ we obtain

\begin{equation}\label{boundL2}
(u''_{km}(0), w) + ((u_{km}(0),w)) + \int_{\Gamma_1}\delta(x)u'_{km}(0) w d\Gamma + \int_{\Omega}|u_{km}(0)||v_{km}(0)|v_{km}(0)w dx=0.
\end{equation}
We have $u_{km}(0)=u_{k}^0, ~ u'_{km}(0)=u_{k}^1$ and $\dfrac{\partial u_k^0}{\partial \nu}=-\delta(x)u_k^1$ on $\Gamma_1$. Aplying Green formula to \eqref{boundL2}, we then obtain

\[
(u''_{km}(0), w) = (\Delta u_k^0,w) + \int_{\Omega}|u_{k}^0||v_{k}^0|v_{k}^0w dx,
\] 
for all $w \in V_{m}^k$. Taking $w=u''_{km}(0)$ in this equality, using H\"{o}lder's inequality and observing the convergences \eqref{c1} we have
\[
|u''_{km}(0)| \leqslant |\Delta u_k^0| + \|u_k^0\|_{L^6(\Omega)}  \|v_k^0\|_{L^6(\Omega)}^2\leqslant C, \quad \forall k, m.
\]
Thus $(u''_{km}(0))$ is bounded in $L^2(\Omega)$, for all $k,m$. In similar way $(v''_{km}(0))$ is bounded in $L^2(\Omega)$, for all $k,m$.

From \eqref{se4}, observing the fact $(u''_{km}(0)), (v''_{km}(0))$ are bounded in $L^2(\Omega)$ and the convergences \eqref{c1} we have 

\[
\begin{split}
&\dfrac{1}{2}(|u''_{km}(t)|^2+|v''_{km}(t)|^2+ \|u'_{km}(t)\|^2 +\|v'_{km}(t)\|^2 ) + \delta_0 \int_{0}^{t}\int_{\Gamma_1}[u''_{km}(s)]^2d\Gamma ds \\
&+ \delta_0 \int_{0}^{t}\int_{\Gamma_1}[v''_{km}(s)]^2d\Gamma ds \leqslant C 
+ \int_{0}^{t} C(|u''_{km}(s)|^2+|v''_{km}(s)|^2+ \|u'_{km}(s)\|^2 +\|v'_{km}(s)\|^2 ) ds.
\end{split}
\]
Therefore by Gronwall's inequality there exists $C(t), ~t>0$ such that
\[
|u''_{km}(t)|^2+|v''_{km}(t)|^2+ \|u'_{km}(t)\|^2 +\|v'_{km}(t)\|^2 + \int_{0}^{t}\int_{\Gamma_1}[u''_{km}(s)]^2d\Gamma ds 
+ \int_{0}^{t}\int_{\Gamma_1}[v''_{km}(s)]^2d\Gamma ds \leqslant C(t), 
\]
it follows that 
\begin{equation}\label{se5}
\left|
\begin{array}{l}
(u'_{km}), (v'_{km})~\mbox{are bounded in } ~L_{loc}^{\infty}(0, \infty; V); \vspace{0.2cm}\\
(u''_{km}), (v''_{km})~\mbox{are bounded in } ~L_{loc}^{\infty}(0, \infty; L^2(\Omega));\vspace{0.2cm}\\
(u''_{km}), (v''_{km})~\mbox{are bounded in} ~L_{loc}^{2}(0, \infty; L^2(\Gamma_1)).
\end{array}
\right.
\end{equation}
{\bf Passage to the limit}.
Estimates \eqref{EEE10}, \eqref{se5} and using symilar arguments used in Theorem \ref{theorem1} with $\rho=1$ allow us
\begin{equation}\label{Z7}
\left|
\begin{array}{l}
u_{km} \to u_k \quad \mbox{and} \quad  v_{km} \to v_k \quad \mbox{weak star in} \quad  L^{\infty}(0,\infty, V), \vspace{0.3cm}\\
u'_{km} \to u'_k \quad \mbox{and} \quad  v'_{km} \to v'_k \quad \mbox{weak star in} \quad  L_{loc}^{\infty}(0,\infty, V), \vspace{0.3cm}\\
u''_{km} \to u''_k \quad \mbox{and} \quad  v''_{km} \to v''_k \quad \mbox{weak star in} \quad  L_{loc}^{\infty}(0,\infty, L^2(\Omega)), \vspace{0.3cm}\\
u'_{km} \to u'_k \quad \mbox{and} \quad  v'_{km} \to v'_k \quad \mbox{weak in} \quad  L^{2}(0,\infty, L^2(\Gamma_1)), \vspace{0.3cm}\\
u''_{km} \to u''_k \quad \mbox{and} \quad  v''_{km} \to v''_k \quad \mbox{weak in} \quad  L_{loc}^{2}(0,\infty, L^2(\Gamma_1)), \vspace{0.3cm}\\
|u_{km}||v_{km}|v_{km} \to |u_k||v_k|v_k \quad \mbox{weak star in} \quad  L^{\infty}(0,\infty, L^{2}(\Omega)),  \vspace{0.3cm}\\
|u_{km}|u_{km}|v_{km}| \to |u_k|u_k|v_k| \quad \mbox{weak star in} \quad  L^{\infty}(0,\infty, L^{2}(\Omega)). 
\end{array}
\right.
\end{equation}

From $\eqref{se5}_1$ and trace theorem we obtain
\[
(u'_{km}), (v'_{km})~\mbox{are bounded in } ~L_{loc}^{\infty}(0, \infty; H^{\frac{1}{2}}(\Gamma_1)).
\]
and thus
\begin{equation}\label{cf}
\left|
\begin{array}{l}
u'_{km} \to u_k' ~\mbox{weak star in } ~L_{loc}^{\infty}(0, \infty; H^{\frac{1}{2}}(\Gamma_1)); \vspace{0.2cm}\\
v'_{km} \to v_k ' ~\mbox{weak star in } ~L_{loc}^{\infty}(0, \infty; H^{\frac{1}{2}}(\Gamma_1)).
\end{array}
\right.
\end{equation}
Multiplying both sides of the approximate equation $\eqref{EEE1}_1$ by $\varphi \in \mathcal{D}(0,\infty)$,
integrating in $[0,\infty)$, using the convergences $\eqref{Z7}_{1,3,6}$, $\eqref{cf}_1$ we obtain 
\begin{equation}\label{aa}
\begin{split}
&\int_{0}^{\infty}(u_k''(t), w) \varphi(t) dt  +\int_{0}^{\infty} ((u_k(t), w)) \varphi(t) dt +\int_{0}^{\infty} \int_{\Gamma_1}\delta u_k'(t) w \varphi(t) d\Gamma dt 
\\ 
&+ \int_{0}^{\infty} ( |u_k(t)||v_k(t)|v_k(t),w ) \varphi(t) dt = 0,~ \forall  w \in V_m^k, ~\forall \varphi \in \mathcal{D}(0,T).
\end{split}
\end{equation}
Since $V_m^k$ is dense in $V\cap H^2(\Omega)$ it follows that \eqref{aa} is true for all $w \in V\cap H^2(\Omega)$.
In similar way, we find
\begin{equation*}
\begin{split}
&\int_{0}^{\infty}(v''_k(t), w)  \varphi(t) dt  +\int_{0}^{\infty} ((v_k(t), w)) \varphi(t) dt + \int_{0}^{\infty} \int_{\Gamma_1}\delta u'_k(t) w \varphi(t) d\Gamma dt
\\ 
&+ \int_{0}^{\infty} ( |u_k(t)|u_k(t)|v_k(t)|,w )\varphi(t) dt = 0, ~ \forall  w \in V \cap H^{2}(\Omega),~\forall \varphi \in \mathcal{D}(0,\infty).
\end{split}
\end{equation*}

We can see that the estimates \eqref{EEE10} and \eqref{se5} are also independent of $k$. Therefore by the same argument used to obtain \eqref{Z7} and \eqref{cf} we get a diagonal sequence $(u_{k_k}), (v_{k_k})$, still denoted by $(u_{k}), (v_{k_k})$, and functions $u,v: \Omega \times [0,\infty) \to \mathbb{R}$ such that
\begin{equation}\label{Z7c}
\left|
\begin{array}{l}
u_{k} \to u \quad \mbox{and} \quad  v_{k} \to v \quad \mbox{weak star in} \quad  L^{\infty}(0,\infty, V); \vspace{0.3cm}\\
u'_{k} \to u' \quad \mbox{and} \quad  v'_{k} \to v' \quad \mbox{weak star in} \quad  L_{loc}^{\infty}(0,\infty, V); \vspace{0.3cm}\\
u''_{k} \to u'' \quad \mbox{and} \quad  v''_{k} \to v'' \quad \mbox{weak star in} \quad  L_{loc}^{\infty}(0,\infty, L^2(\Omega)); \vspace{0.3cm}\\
u'_{k} \to u' \quad \mbox{and} \quad  v'_{k} \to v' \quad \mbox{weak in} \quad  L^{2}(0,\infty, L^2(\Gamma_1)); \vspace{0.3cm}\\
u''_{k} \to u'' \quad \mbox{and} \quad  v''_{k} \to v'' \quad \mbox{weak in} \quad  L_{loc}^{2}(0,\infty, L^2(\Gamma_1)); \vspace{0.3cm}\\
|u_{k}||v_{k}|v_{k} \to |u||v|v \quad \mbox{weak star in} \quad  L^{\infty}(0,\infty, L^{2}(\Omega));  \vspace{0.3cm}\\
|u_{km}|u_{km}|v_{km}| \to |u|u|v|  \quad \mbox{weak star in} \quad  L^{\infty}(0,\infty, L^{2}(\Omega));
\vspace{0.3cm}\\
u'_{k} \to u' \quad \mbox{and} \quad v'_{k} \to v' \quad \mbox{weak star in}  \quad L_{loc}^{\infty}(0, \infty; H^{\frac{1}{2}}(\Gamma_1)). 
\end{array}
\right.
\end{equation}
Taking the limit in \eqref{aa}, using convergences $\eqref{Z7c}_{1,3,6,8}$ and observing that $ V \cap H^{2}(\Omega)$ is dense in $V$, we obtain
\begin{equation}\label{i}
\begin{split}
&\int_{0}^{\infty}(u''(t), w) \varphi(t) dt  +\int_{0}^{\infty} ((u(t), w)) \varphi(t) dt +\int_{0}^{\infty} \int_{\Gamma_1}\delta u'(t) w \varphi(t) d\Gamma dt 
\\ 
&+ \int_{0}^{\infty} ( |u(t)||v(t)|v(t),w ) \varphi(t) dt = 0,~ \forall  w \in V, ~\forall \varphi \in \mathcal{D}(0,T).
\end{split}
\end{equation}
In similar way, we find
\begin{equation*}
\begin{split}
&\displaystyle\int_{0}^{\infty}(v''(t), z)\varphi(t) dt  +\int_{0}^{\infty} ((v(t), z)) \varphi(t) dt + \int_{0}^{\infty} \int_{\Gamma_1}\delta u'(t) z \varphi(t) d\Gamma dt
\\ 
&+ \displaystyle\int_{0}^{\infty} ( |u(t)|u(t)|v(t)|,z )\varphi(t)dt = 0, ~ \forall  z \in V,~\forall \varphi \in \mathcal{D}(0,T).
\end{split}
\end{equation*}

Taking in \eqref{i} $w \in \mathcal{D}(\Omega) \subset V$, it follows that
\[
u''-\Delta u + |u||v|v = 0 \quad \mbox{in} \quad \mathcal{D}'(\Omega).
\]
In similar way
\[
v''-\Delta v + |u|u|v|= 0 \quad \mbox{in} \quad \mathcal{D}'(\Omega).
\]
Therefore, by $\eqref{Z7}_{3,6}$, we get
\begin{equation}\label{G}
\begin{split}
u''-\Delta u + |u||v|v = 0 \quad \mbox{in} \quad L_{loc}^{\infty}(0,\infty, L^2(\Omega)) \\
\\
v''-\Delta v + |u|u|v|= 0 \quad \mbox{in} \quad L_{loc}^{\infty}(0,\infty, L^2(\Omega)).
\end{split}
\end{equation} 
As $u \in L^{\infty}(0,\infty;V)$ and by $\eqref{G}_1$, $\Delta u \in  L_{loc}^{\infty}(0,T;L^{2}(\Omega))$ then, by Milla Miranda \cite{MT} we obtain 
\begin{equation}\label{traco}
\frac{\partial u}{\partial \nu} \in  L^{\infty}(0,\infty;H^{-\frac{1}{2}}(\Gamma_1))
\end{equation}
Multiplyng both sides of \eqref{G} by $w\varphi$ with $w \in V$ and $\varphi \in \mathcal{D}(0,T)$, integrating on $\Omega \times (0,T)$ and using \eqref{traco} and Green's formula
\begin{equation*}
\begin{split}
&\displaystyle\int_{0}^{\infty}(u''(t), w) \varphi(t) dt  +\int_{0}^{\infty} ((u(t), w)) \varphi(t) dt - \int_{0}^{\infty} \left\langle \frac{\partial u(t)}{\partial \nu},w \right\rangle \varphi(t)d\Gamma dt
\\ 
&+ \displaystyle\int_{0}^{\infty} (|u(t)|u(t)|v(t)|,w )\varphi(t) dt = 0, ~ \forall  w \in V, ~\forall \varphi \in \mathcal{D}(0,T),
\end{split}
\end{equation*}
where $\langle \cdot,\cdot\rangle$ denotes the duality paring between $H^{-\frac{1}{2}}(\Gamma_1)$ and $H^{\frac{1}{2}}(\Gamma_1)$. Comparing this lest equation with \eqref{aa}, we obtain
\[
\int_{0}^{\infty}\left\langle \frac{\partial u(t)}{\partial \nu}+\delta u'(t),w \right\rangle \varphi(t) d\Gamma dt=0, ~ \forall  w \in V ~\forall \varphi \in \mathcal{D}(0,T)
\]
Therefore
\[
\frac{\partial u}{\partial \nu}+\delta u'=0 \quad \mbox{in} \quad H^{-\frac{1}{2}}(\Gamma_1).
\]
In similar way
\[
\frac{\partial v}{\partial \nu}+\delta v'=0 \quad \mbox{in} \quad H^{-\frac{1}{2}}(\Gamma_1).
\]
Snce $\delta u' \in L_{loc}^{\infty}(0,\infty, H^{\frac{1}{2}}(\Gamma_1))$, then
\begin{equation}\label{eult}
\frac{\partial u}{\partial \nu}+\delta u'=0 \quad \mbox{in} \quad L_{loc}^{\infty}(0,\infty, H^{\frac{1}{2}}(\Gamma_1)).
\end{equation}
In similar way
\begin{equation}\label{eult1}
\frac{\partial v}{\partial \nu}+\delta v'=0 \quad \mbox{in} \quad L_{loc}^{\infty}(0,\infty, H^{\frac{1}{2}}(\Gamma_1)).
\end{equation}
To complete the proof of the Theorem \ref{theorem3}, we shall show that $u \in L_{loc}^{\infty}(0,\infty, H^2(\Omega))$. In fact, note that $u \in L^{\infty}(0,\infty;V)$. This and $\eqref{G}_1$, \eqref{eult} imply that $u$ is the solution of the following boundary value problem:
\begin{equation*}
\left|
\begin{array}{l}
-\Delta u = f \quad \mbox{in} \quad  \Omega \times [0,T]; \vspace{0.2cm}\\
u = 0 ~\quad \mbox{on} \quad  \Gamma_0 \times [0,T]; \\
\frac{\partial u}{\partial \nu} = g \quad \mbox{on} \quad  \Gamma_1 \times [0,T],
\end{array}
\right.
\end{equation*}
for all real number $T>0$. Since 
\[
f=-u''-|u||v|v \in L_{loc}^{\infty}(0,\infty;L^2(\Omega)) \quad \mbox{and} \quad g=-\delta u' \in L_{loc}^{\infty}(0,\infty; H^{\frac{1}{2}}(\Gamma_1)), 
\]
it follows by Proposition \ref{propos} that 
\begin{equation}\label{reg}
u \in L_{loc}^{\infty}(0,\infty;V\cap H^2(\Omega)).
\end{equation}
In similar way 
\begin{equation}\label{reg1}
v \in L_{loc}^{\infty}(0,\infty;V\cap H^2(\Omega)).
\end{equation}
\\
{\bf Initial Conditions} The verification of the initial conditions follows by similar arguments used in the Theorem \ref{theorem1} in the previous section.
\\
{\bf Uniqueness}: The uniqueness results from the energy method. 

\begin{remark}
From \eqref{Z7}, we obtain $u_k$ and $v_k$ in the class \eqref{class3}. From \eqref{aa} and using the same arguments for obtain \eqref{G}, \eqref{eult} and \eqref{eult1}, we get 
\begin{equation}\label{agora}
	\begin{split}
	&u_k'' - \Delta u_k + |u_k||v_k|v_k =0 \quad \text{in} \quad  L^{\infty}_{loc}(0,\infty;L^{2}(\Omega))\\ 
	&v_k'' - \Delta v _k+ |u_k|u_k|v_k| =0 \quad \text{in} \quad L_{loc}^{\infty}(0,\infty; L^{2}(\Omega))\\
	&\frac{\partial u_k}{\partial \nu} + \delta(x)u_k'=0 \quad \mbox{in} \quad L^{\infty}_{loc}(0,\infty; H^{\frac{1}{2}}(\Gamma_1))	\\
	&\frac{\partial v_k}{\partial \nu} + \delta(x)v_k'=0 \quad \mbox{in} \quad  L^{\infty}_{loc}(0,\infty; H^{\frac{1}{2}}(\Gamma_1)).
	\end{split}
	\end{equation}
Also using the same arguments for obtain the regularities \eqref{reg} and \eqref{reg1}, we get
\begin{equation}\label{r}
u_k, v_k \in L_{loc}^{\infty}(0,\infty;V\cap H^2(\Omega)).
\end{equation}

\end{remark}
\begin{corollary}\label{corollary3}
We obtain similar results to Theorem \ref{theorem3} for the case $\rho>1$ and $n=1,2$.
\end{corollary}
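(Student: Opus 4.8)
The plan is to transcribe, almost word for word, the proof of Theorem~\ref{theorem3}, keeping the Faedo--Galerkin scheme \eqref{EEE1} with the special basis of $V\cap H^{2}(\Omega)$ supplied by Propositions~\ref{propos} and~\ref{propos1}, and only replacing the nonlinearities $|u_{km}||v_{km}|v_{km}$ and $|u_{km}|u_{km}|v_{km}|$ by $|u_{km}|^{\rho}|v_{km}|^{\rho}v_{km}$ and $|u_{km}|^{\rho}u_{km}|v_{km}|^{\rho}$. Since $n=1,2$ one has $V\hookrightarrow L^{p}(\Omega)$ for every finite $p$, so these products lie in $L^{2}(\Omega)$ and all the integrals make sense. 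First I would run the first energy estimate exactly as in Theorem~\ref{theorem3}, but with the function $J(\lambda)=\tfrac14\lambda^{2}-N\lambda^{2(\rho+1)}$ (with $N$ and $\lambda^{*}$ as in \eqref{Nc}, \eqref{Hc21}) and the obvious analogue of the smallness hypotheses \eqref{rest}--\eqref{N} on the data; the analogue of Lemma~\ref{L1}, proved by the same continuity/contradiction argument based on $J\geqslant 0$ on $[0,\lambda^{*}]$, then gives $\|u_{km}(t)\|,\|v_{km}(t)\|<\lambda^{*}$ for all $t$, hence the uniform bounds of the type \eqref{EEE10}, including boundedness of $(|u_{km}|^{\rho}|v_{km}|^{\rho}v_{km})$ and $(|u_{km}|^{\rho}u_{km}|v_{km}|^{\rho})$ in $L^{\infty}(0,\infty;L^{2}(\Omega))$ (again a consequence of $V\hookrightarrow L^{p}(\Omega)$ for all $p$).

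Next I would carry out the second estimate. Since $\rho>1$ the map $s\mapsto|s|^{\rho}s$ is $C^{1}$ on $\mathbb{R}$, so $\eqref{EEE1}_{1}$ may be differentiated in $t$ and tested with $u''_{km}(t)$ exactly as in Theorem~\ref{theorem3}; in place of the two integrals in \eqref{se1} one now obtains terms of the type $\int_{\Omega}|u_{km}|^{\rho-1}|u'_{km}|\,|v_{km}|^{\rho+1}|u''_{km}|\,dx$ and $\int_{\Omega}|u_{km}|^{\rho}|v_{km}|^{\rho}|v'_{km}|\,|u''_{km}|\,dx$. Because $n=1,2$, the products $|u_{km}|^{\rho-1}|v_{km}|^{\rho+1}$ and $|u_{km}|^{\rho}|v_{km}|^{\rho}$ are bounded in every $L^{p}(\Omega)$ by the first estimate; placing $u''_{km}$ in $L^{2}(\Omega)$ and $u'_{km}$ (resp.\ $v'_{km}$) in $L^{4}(\Omega)$ via $V\hookrightarrow L^{4}(\Omega)$ and applying Young's inequality, each such term is dominated by $C(\|u'_{km}(t)\|^{2}+\|v'_{km}(t)\|^{2}+|u''_{km}(t)|^{2})$ with $C$ independent of $k,m$, and these are precisely the quantities already present in the Gronwall functional. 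The bounds for $|u''_{km}(0)|$ and $|v''_{km}(0)|$ are obtained, as in Theorem~\ref{theorem3}, by putting $t=0$ in the approximate equation and using Green's formula together with the special basis, which gives $(u''_{km}(0),w)=(\Delta u_{k}^{0},w)+(|u_{k}^{0}|^{\rho}|v_{k}^{0}|^{\rho}v_{k}^{0},w)$, the right-hand side being controlled by $|\Delta u_{k}^{0}|$ and an $L^{2}(\Omega)$-norm of a product of powers of $u_{k}^{0},v_{k}^{0}$, both uniformly bounded via \eqref{c1} and $V\hookrightarrow L^{p}(\Omega)$. Gronwall's inequality then produces the analogue of \eqref{se5}: $(u'_{km}),(v'_{km})$ bounded in $L^{\infty}_{loc}(0,\infty;V)$, $(u''_{km}),(v''_{km})$ bounded in $L^{\infty}_{loc}(0,\infty;L^{2}(\Omega))$ and in $L^{2}_{loc}(0,\infty;L^{2}(\Gamma_{1}))$.

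From here the passage to the limit is a verbatim copy of the corresponding step of Theorem~\ref{theorem3}: Aubin--Lions compactness of $V\hookrightarrow L^{2}(\Omega)$ gives strong $L^{2}$ and a.e.\ convergence of $(u_{km})$ and $(v_{km})$, hence a.e.\ convergence of the nonlinear terms; Lions' lemma identifies the weak limits; the trace theorem upgrades the convergence of $u'_{km}$ to $H^{\frac12}(\Gamma_{1})$; and Green's formula recovers the equations and the boundary conditions, placing $\{u,v\}$ in the class \eqref{class3}, while a diagonal extraction in $k$ closes the existence part. The $H^{2}$-regularity follows from Proposition~\ref{propos} applied with $f=-u''-|u|^{\rho}|v|^{\rho}v\in L^{\infty}_{loc}(0,\infty;L^{2}(\Omega))$ and $g=-\delta u'\in L^{\infty}_{loc}(0,\infty;H^{\frac12}(\Gamma_{1}))$, and likewise for $v$. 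For uniqueness I would use the energy method as in Theorem~\ref{theorem3}: since $n=1,2$ gives $V\cap H^{2}(\Omega)\hookrightarrow L^{\infty}(\Omega)$, any two solutions range in a fixed bounded interval of $\mathbb{R}$ on which $s\mapsto|s|^{\rho}s$ is Lipschitz, so for $w=u-\hat u$, $z=v-\hat v$ the difference of the nonlinear parts is bounded pointwise by $C(|w|+|z|)$; the energy identity for $(w,z)$, together with $\delta\geqslant\delta_{0}>0$ and Gronwall's inequality with vanishing initial data, forces $w\equiv z\equiv0$. The exponential decay of Theorem~\ref{teo2.2} then extends to this case by repeating the multiplier argument with the constants adjusted to the present $\rho$ and $n$.

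I expect the only genuinely non-routine point to be the second estimate: one must check that, after differentiating the coupled nonlinearities in $t$ for $\rho>1$, every resulting integral can be recast — using that $V\hookrightarrow L^{p}(\Omega)$ for all finite $p$ when $n=1,2$ — purely in terms of $\|u'_{km}\|^{2}$, $\|v'_{km}\|^{2}$, $|u''_{km}|^{2}$, $\|v''_{km}\|^{2}$, $|v''_{km}|^{2}$, i.e.\ of the quantities already controlled by the left-hand side, and in particular that the factor $u'_{km}$ (a priori only in $L^{2}(\Omega)$) is absorbed through its gradient norm. Everything else is a transcription of the $\rho=1$, $n\leqslant3$ argument.
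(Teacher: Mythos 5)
Your proposal is correct and is exactly the argument the paper intends: the corollary is stated without proof, and the expected justification is precisely a transcription of the proof of Theorem \ref{theorem3} in which the embeddings $V\hookrightarrow L^{p}(\Omega)$ for every finite $p$ (valid for $n=1,2$) replace the $n=3$ embeddings, with the first estimate run through $J(\lambda)=\tfrac14\lambda^{2}-N\lambda^{2(\rho+1)}$ and the second estimate using that $s\mapsto|s|^{\rho}s$ is $C^{1}$ for $\rho>1$. You correctly isolate the only delicate step (differentiating the nonlinearity in $t$ and absorbing the resulting terms via $V\hookrightarrow L^{4}(\Omega)$), so nothing is missing.
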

\section{Assymptotic behavior}
In this section we prove an exponential decay result for the solution obtained in Theorem \ref{theorem3} and Corollary \ref{corollary3}, that is, $\rho=1$ if $n \leqslant 3$ and $\rho>1$ if $n=1,2$. 

We make the proof for $\rho=1$ and $n=3$. The result for $\rho=1$ and $n=1,2$ are derived in similar way.

To prove the Theorem \ref{teo2.2} we show that the energy
\begin{equation*}\label{Em}
\begin{split}
E_k(t)&=\frac{1}{2}\{|u'_{k}(t)|^2 +|v'_{k}(t)|^2 + \|u_{k}(t)\|^2  +  \|v_{k}(t)\|^2 \} \\
&+ \frac{1}{2}\int_{\Omega} (|u_{k}(t)|u_{k}(t))(|v_{k}(t)|v_{k}(t))dx,~ t \in [0, \infty).
\end{split}
\end{equation*}
associated with the solutions $\{u_k(t), v_k(t)\}$ of the equations in \eqref{agora} satisfies the inequality $\eqref{EE22}.$ Thus, the exponential decay of $E(t)$, will be obtained by taking the $\liminf$ of $E_k(t)$ as $k \to \infty$.

Now, we introduce the function
\begin{equation*}\label{AB1}
\begin{split}
\psi_k(t)&=2(u'_{k}(t),m \cdot \nabla u_{k}(t))+(n-1)(u'_{k}(t),u_{k}(t))\\
&+2(v'_{k}(t),m \cdot \nabla v_{k}(t))+(n-1)(v'_{k}(t),v_{k}(t)).
\end{split}
\end{equation*}  
For $\varepsilon >0,$ we introduce the perturbed energy
\begin{equation*}\label{AB2}
E_{k\varepsilon }(t) := E_{k}(t)  + \varepsilon \psi_k(t).
\end{equation*}

First we prove that $E_{k\varepsilon}(t)$ and $E_{k}(t)$ are equivalent. Then we show that
\begin{equation}\label{EN}
E'_{k\varepsilon}(t) \leqslant -\varepsilon E_{k}(t).
\end{equation}
\subsection{Equivalence between $E_{k\varepsilon}(t)$ and $E_{k}(t)$}
First of all, we note that
\begin{equation}\label{A1}
A_k(t)=\frac{1}{4}( \|u_{k}(t)\|^2  +  \|v_{k}(t)\|^2 ) + \frac{1}{2}\int_{\Omega} (|u_{k}(t)|u_{k}(t))(|v_{k}(t)|v_{k}(t))dx \geqslant 0, \quad \forall t\in[0,\infty).
\end{equation}
In fact,
\[
\frac{1}{2}\left|\int_{\Omega} (|u_{k}(t)|u_{k}(t))(|v_{k}(t)|v_{k}(t))dx\right| \leqslant \frac{1}{4}c_1^4 (\|u_{k}(t)\|^4  +  \|v_{k}(t)\|^4 ).
\]
Then 
\begin{equation}\label{A2}
A_k(t) \geqslant \frac{1}{4}\|u_{k}(t)\|^2 - \frac{1}{4}c_1^4\|u_{k}(t)\|^4 + \frac{1}{4}\|v_{k}(t)\|^2  - \frac{1}{4}c_1^4 \|v_{k}(t)\|^4.
\end{equation}
As $ - \frac{1}{4}c_1^4>-N_1$, we obtain
\[
\frac{1}{4}\|u_{k}(t)\|^2 - \frac{1}{4}c_1^4\|u_{k}(t)\|^4\geqslant \frac{1}{4}\|u_{k}(t)\|^2 - N_1\|u_{k}(t)\|^4 .
\]
If we take the limits $m \to \infty$ in Lemma \ref{L1}, we find
\begin{equation}\label{A3}
J_1(\|u_{k}(t)\|)=\frac{1}{4}\|u_{k}(t)\|^2 - N_1\|u_{k}(t)\|^4 \geqslant 0, \quad \forall t \in [0,\infty).
\end{equation}
In similar way
\begin{equation}\label{A4}
\frac{1}{4}\|v_{k}(t)\|^2 - N_1\|v_{k}(t)\|^4 \geqslant 0, \quad \forall t \in [0,\infty).
\end{equation}
Taking into account \eqref{A3} and \eqref{A4} in \eqref{A2}, we derive \eqref{A1}.

Observe that
\[
E_k(t)\geqslant \frac{1}{4}(|u'_{k}(t)|^2+|v'_{k}(t)|^2)+\frac{1}{4}(\|u_{k}(t)\|^2+\|v_{k}(t)\|^2)+A_k(t).
\]
Then by \eqref{A1}
\begin{equation}\label{A5}
E_k(t)\geqslant \frac{1}{4}(|u'_{k}(t)|^2+|v'_{k}(t)|^2)+\frac{1}{4}(\|u_{k}(t)\|^2+\|v_{k}(t)\|^2),\quad \forall t \in [0,\infty).
\end{equation}
On the other side, we have
\[
|\psi_k(t)| \leqslant \left(R+\frac{n-1}{2}\right)(|u_k'(t)|^2 + |v_k'(t)|^2) + \left(R+\frac{n-1}{2\lambda_1}\right)( \|u_k(t)\|^2 + \|v_k(t)\|^2),
\]
where $\lambda_1$ is the first eigenvalue of the spectral problem $((u,v))=\lambda(u,v), ~ u,v \in V$. Thus
\begin{equation}\label{A6}
|\psi_k(t)| \leqslant \frac{P}{4}(|u'_{k}(t)|^2+|v'_{k}(t)|^2+\|u_{k}(t)\|^2+\|v_{k}(t)\|^2), 
\end{equation}
where $P$ was defined in \eqref{const}.

From \eqref{A5} and \eqref{A6} it follows that
\[
|\psi_k(t)| \leqslant PE_k(t), \quad \forall t\in[0,\infty).
\]
Since that 
\[
|E_{k\varepsilon }(t) - E_k(t)| = \varepsilon|\psi_k(t)| \leqslant \varepsilon P E_k(t),
\]
we have
\[
E_k(t)(1- \varepsilon P) \leqslant E_{k\varepsilon}(t) \leqslant (1+ \varepsilon P) E_k(t).
\]
Then
\begin{equation}\label{AB5}
\frac{1}{2} E_k(t) \leqslant E_{k\varepsilon_1 }(t) \leqslant \frac{3}{2} E_k(t), \quad 0 < \varepsilon_1 \leqslant \frac{1}{2P}.
\end{equation}

From now on, to simplify the notation we will do not write the variable t.

\subsection{Proof of} \eqref{EN}.
Multiplying $\eqref{agora}_{1}$ and $\eqref{agora}_{2}$ by $u'_k$ and $v'_k$, respectively, using the fact $\delta(x)=m(x)\cdot \nu(x)$ the hypothesis \eqref{const}, we get
\begin{equation}\label{AB3}
E'_k(t)\leqslant -m_0(\|u'_k(t)\|_{L^2(\Gamma_1)}^2+\|v'_k(t)\|_{L^2(\Gamma_1)}^2).
\end{equation}

The idea to prove \eqref{EN} is to find that
\[
\begin{split}
\psi'_k(t) &\leqslant - E_{k}(t)- \left[\frac{1}{4}(\|u_k(t)\|^2+\|v_k(t)\|^2)-N_1(\|u_k(t)\|^4+\|v_k(t)\|^4)\right] \\
&+ D(\|u'_k(t)\|_{L^2(\Gamma_1)}^2+\|v'_k(t)\|_{L^2(\Gamma_1)}^2),
\end{split}
\]
where $N_1$ and $D$ are positive constants independent of $k$. 

Then to prove an existence theorem of solutions which permits us to say
\[
\frac{1}{4}(\|u_k(t)\|^2+\|v_k(t)\|^2)-N_1(\|u_k(t)\|^4+\|v_k(t)\|^4)\geqslant 0, \quad \forall t \in [0,\infty).
\]
Thus
\[
E'_{k\varepsilon}(t)= E'_{k}(t)  + \varepsilon \psi'_k(t)\leqslant - \varepsilon E_{k}(t)-(m_0 -\varepsilon D)(\|u'_k(t)\|_{L^2(\Gamma_1)}^2+\|v'_k(t)\|_{L^2(\Gamma_1)}^2).
\]
For small $\varepsilon>0$, we obtain \eqref{EN}.

Differentiating the function $\psi_k,$ we obtain
\[
\begin{split}
\psi_k' &=  2(u_k'',m \cdot \nabla u_k) + 2(u_k',m \cdot \nabla u_k')+ (n-1)(u_k'',u_k) + (n-1)|u_k'|^2 \\
&+ 2(v_k'',m \cdot \nabla v_k) + 2(v_k',m \cdot \nabla v_k')+ (n-1)(v_k'',v_k) + (n-1)|v_k'|^2 
\end{split}
\]
From $\eqref{agora}_1$ and $\eqref{agora}_2,$ we find
\begin{equation}\label{psilinha}
\begin{split}
\psi_k'&=2(\Delta u_k, m \cdot \nabla u_k) - 2(|u_k||v_k|v_k ,m \cdot \nabla u_k)+2(u_k',m \cdot \nabla u_k') + (n-1) (\Delta u_k ,u_k) \\
&- (n-1)(|u_k||v_k|v_k ,u_k) + (n-1)|u_k'|^2 \\
&+ 2(\Delta v_k, m \cdot \nabla v_k) - 2(|u_k|u_k|v_k|,m \cdot \nabla v_k)+2(v'_k,m \cdot \nabla v_k) + (n-1) (\Delta v_k ,v_k) \\
&- (n-1)(|u_k|u_k|v_k| ,v_k) + (n-1)|v'_k|^2 \\
& =: I_1+ \cdots + I_{12},
\end{split}
\end{equation}

respectively. 

Our goal is to derive a bound above for each terms on the right hand side of \eqref{psilinha}.

\begin{itemize}
	
	\item The regularity \eqref{r} allows us to obtain Rellich's identity for $u_k$, see \cite[Remark 2.3, p. 41]{Zuazua}, that is,
	\[
	I_1=2(\Delta u_k,m\cdot \nabla u_k)=(n-2)\|u_k\|^2+2\int_{\Gamma} \frac{\partial u_k}{\partial \nu} (m \cdot \nabla u_k)d\Gamma - 
	\int_{\Gamma}(m \cdot \nu)|\nabla u_k|^2 d\Gamma.
	\]
	Since $|\nabla u_k|^2=\left(\frac{\partial u_k}{\partial \nu}\right)^2$ and $m \cdot \nabla u_k=(m \cdot \nu)\frac{\partial u_k}{\partial 
		\nu}$ on $\Gamma_0$, then
	\[
	\int_{\Gamma} (m \cdot \nu)|\nabla u_k|^2 d\Gamma =  \int_{\Gamma_0} (m \cdot \nu)\left(\frac{\partial u_k}{\partial \nu}
	\right)^2d\Gamma+ \int_{\Gamma_1} (m \cdot \nu)|\nabla u_k|^2 d\Gamma
	\]
	and
	\[
	\int_{\Gamma} \frac{\partial u_k}{\partial \nu}(m \cdot \nabla u_k) d\Gamma =  \int_{\Gamma_0} (m \cdot \nu)\left(\frac{\partial u_k}
	{\partial \nu}\right)^2 d\Gamma+ \int_{\Gamma_1} \frac{\partial u_k}{\partial \nu}(m \cdot \nabla u_k) d\Gamma.
	\]
	Thus 
	\begin{equation}\label{decay1}
	\begin{split}
	I_1&=(n-2)\|u_k\|^2+ 2\int_{\Gamma_0} (m \cdot \nu)\left(\frac{\partial u_k}{\partial \nu}\right)^2 d\Gamma+ 2\int_{\Gamma_1} \frac{\partial u_k}{\partial \nu}(m \cdot \nabla u_k) d\Gamma \\
	&- \int_{\Gamma_0} (m \cdot \nu)\left(\frac{\partial u_k}{\partial \nu}\right)^2 d\Gamma - \int_{\Gamma_1} (m \cdot \nu)|\nabla u_k|^2 d\Gamma
	\end{split}
	\end{equation}
	As $\dfrac{\partial u_k}{\partial \nu} + (m \cdot \nu)u_k'=0$ on $\Gamma_1$ we have
	\begin{equation}\label{decay2}
	\begin{split}
	& \left|2 \int_{\Gamma_1} \frac{\partial u_k}{\partial \nu}(m \cdot \nabla u_k) d\Gamma \right|=  \left|2 \int_{\Gamma_1} (m \cdot \nu)u'_k(m \cdot \nabla u_k) d\Gamma \right| \\
	& \leqslant R^3 \int_{\Gamma_1}|u'_k|^2 d\Gamma + \int_{\Gamma_1}(m \cdot \nu)|\nabla u_k|^2.
	\end{split}
	\end{equation}
	Substituting \eqref{decay2} in \eqref{decay1}, making the reduction of similar terms and observing that $m \cdot \nu \leqslant 0$ on $\Gamma_0$, we get
	
	\begin{equation}\label{decay3}
	I_1 \leqslant (n-2)\|u_k\|^2 + R^3 \|u'_k\|_{L^2(\Gamma_1)}.
	\end{equation}
	In similar way
	\begin{equation}\label{decay31}
	I_7 \leqslant (n-2)\|v_k\|^2 + R^3\|v'_k\|_{L^2(\Gamma_1)}.
	\end{equation}
	
	\item Note that 
	\[
	\begin{split}
	I_2=-2(|u_k||v_k|v_k, m\cdot \nabla u_k)=-\sum_{i=1}^{n}\int_{\Omega}m_i\left[\frac{\partial }{\partial x_i}(|u_k|u_k)\right](|v_k|v_k)dx.
	\end{split}
	\]
	Similarly we find
	\[
	I_8=-2(|u_k|u_k|v_k|, m\cdot \nabla v_k)=-\sum_{i=1}^{n}\int_{\Omega}m_i(|u_k|u_k)\left[\frac{\partial }{\partial x_i}(|v_k|v_k)\right]dx.
	\]
	Thus from Green's theorem and using the fact $\dfrac{\partial m_i}{\partial x_i}=1$ and $u_k=v_k=0$ on $\Gamma_0$, we have
	\[
	I_2+I_8=n\int_{\Omega}(|u_k|u_k)(|v_k|v_k)dx- \int_{\Gamma_1}(m\cdot \nu)(|u_k|u_k)(|v_k|v_k)d\Gamma.
	\]
	Using \eqref{gamma}, we have
	\[
	\left|-\int_{\Gamma_1}(m\cdot \nu)(|u_k|u_k)(|v_k|v_k)d\Gamma \right| \leqslant \frac{Rc_2^{4}}{2}\left(\|u_k\|^{4}+\|v_k\|^{4} \right).
	\] 
	Therefore
	
	\begin{equation}\label{decay4}
	I_2+I_8 \leqslant n\int_{\Omega}(|u_k|u_k)(|v_k|v_k)dx+ \frac{Rc_2^{4}}{2}\left(\|u_k\|^{4}+\|v_k\|^{4} \right)
	\end{equation}
	
	\item From Green's theorem and since $\dfrac{\partial m_i}{\partial x_i}=1$ and $u'_k=0$ on $\Gamma_0$, we obtain
\begin{equation}\label{decay5}
I_3=2(u'_k,m \cdot \nabla u'_k) \leqslant -n |u'_k|^2+R\|u'_k\|_{L^2(\Gamma_1)}^2.
\end{equation}
	In Similar way
	\begin{equation}\label{decay51}
	I_9 \leqslant -n |v'_k|^2+R\|v'_k\|_{L^2(\Gamma_1)}^2.
	\end{equation}
	
	\item From the boundary conditions $\dfrac{\partial u_k}{\partial \nu}+(m \cdot \nu)u'_k=0$ on $\Gamma_1$, we find
	\[
	(\Delta u_k, u_k)=-\|u_k\|^2-\int_{\Gamma_1}(m\cdot \nu)u_k'u_k d\Gamma 
	\]
	Note that, using \eqref{gamma} we obtain
	\[
	\begin{split}
	\left|\int_{\Gamma_1}(m\cdot \nu)u_k'u_k d\Gamma \right| & \leqslant R \int_{\Gamma_1}|u_k'||u_k| d\Gamma \\
	&\leqslant \frac{1}{2}R^2c_3^2(n-1)\|u_k'\|_{L^2(\Gamma_1)}^2 + \frac{1}{2(n-1)c_3^2}\|u_k\|_{L^2(\Gamma_1)}^2 \\
	&\leqslant \frac{1}{2}R^2(n-1)c_3^2\|u_k'\|_{L^2(\Gamma_1)}^2 + \frac{1}{2(n-1)}\|u_k\|^2.
	\end{split}
	\]
	Thus
	\begin{equation}\label{decay6}
	I_4 =(n-1)(\Delta u_k,u_k)\leqslant -(n-1)\|u_k\|^2 + \frac{1}{2}R^2(n-1)^2c_3^2\|u_k'\|_{L^2(\Gamma_1)}^2 + \frac{1}{2}\|u_k\|^2.
	\end{equation}
	In similar way 
	\begin{equation}\label{decay61}
	I_{10} \leqslant -(n-1)\|v_k\|^2 + \frac{1}{2}R^2(n-1)^2c_3^2\|v_k'\|_{L^2(\Gamma_1)}^2 + \frac{1}{2}\|v_k\|^2.
	\end{equation}
	
	\item From \eqref{H1} we have
	\[
	(|u_k||v_k|v_k,u_k)\leqslant \int_{\Omega}|u_k|^{2}|v_k|^{2}dx \leqslant \frac{c_1^{4}}{2}(\|u_k\|^{4} + \|v_k\|^{4}).
	\]
	Therefore
	\begin{equation}\label{decay7}
	I_5=-(n-1)(|u_k||v_k|v_k,u_k) \leqslant \frac{c_1^{4}}{2}(n-1)(\|u_k\|^{4} + \|v_k\|^{4}).
	\end{equation}
	In similar way 
	\begin{equation}\label{decay71}
	I_{11} \leqslant \frac{c_1^{4}}{2}(n-1)(\|u_k\|^{4} + \|v_k\|^{4}).
	\end{equation}
\end{itemize}
Taking into account \eqref{decay3}--\eqref{decay71} in \eqref{psilinha} and reducing similar terms, we obtain
\[
\begin{split}
I_1+ \cdots+I_{12} & \leqslant -(|u_k'|^2+|v_k'|^2)-\frac{1}{2} (\|u_k\|^2+\|v_k\|^2)+n\int_{\Omega}(|u_k|u_k)(|v_k|v_k)dx \\
&+\left[\frac{Rc_2^4}{2}+c_1^4(n-1)\right](\|u_k\|^{4} + \|v_k\|^{4})+ D( \|u_k'\|_{L^2(\Gamma_1)}^2 + \|v_k'\|_{L^2(\Gamma_1)}^2)
\end{split}
\]
where $D$ defined in \eqref{const}. Thus,
\begin{equation}\label{decay712}
\begin{split}
I_1+ \cdots+I_{12} & \leqslant -\frac{1}{2}(|u_k'|^2+|v_k'|^2)-\frac{1}{4} (\|u_k\|^2+\|v_k\|^2)-\frac{1}{4}\int_{\Omega}(|u_k|u_k)(|v_k|v_k)dx\\
&-\frac{1}{4}(\|u_k\|^2+\|v_k\|^2)+\left[n+\frac{1}{4}\right]\int_{\Omega}(|u_k|u_k)(|v_k|v_k)dx \\
&+\left[\frac{Rc_2^4}{2}+c_1^4(n-1)\right](\|u_k\|^{4} + \|v_k\|^{4})+ D( \|u_k'\|_{L^2(\Gamma_1)}^2 + \|v_k'\|_{L^2(\Gamma_1)}^2)
\end{split}
\end{equation}
Now using \eqref{H1}, we have
\begin{equation}\label{1s1}
\left|\int_{\Omega}(|u_k|v_k)(|v_k|v_k)dx\right| \leqslant \frac{c_1^{4}}{2}(\|u_k\|^{4} + \|v_k\|^{4}).
\end{equation}
Combining \eqref{1s1} with \eqref{decay712} we get
\[
\begin{split}
I_1+ \cdots+I_{12} & \leqslant -\frac{1}{2}E_k-\left(\frac{1}{4}(\|u_k\|^2+\|v_k\|^2)-N_1(\|u_k\|^{4} + \|v_k\|^{4})\right) + D( \|u_k'\|_{L^2(\Gamma_1)}^2 + \|v_k'\|_{L^2(\Gamma_1)}^2),
\end{split}
\]
where $N_1$ was defined in \eqref{N}. From \eqref{A3} and \eqref{A4} we obtain
\[
\frac{1}{4}(\|u_k\|^2+\|v_k\|^2)-N_1(\|u_k\|^{4} + \|v_k\|^{4}) \geqslant 0 .
\]
Therefore
\begin{equation}\label{decay10}
\psi_k' \leqslant - \frac{1}{2} E_k + D( \|u_k'\|_{L^2(\Gamma_1)}^2 + \|v_k'\|_{L^2(\Gamma_1)}^2).
\end{equation}
From \eqref{AB3} and \eqref{decay10} and knowing that $E'_{k\varepsilon}=E_k'+\varepsilon\psi_k'$ we have
\[
E'_{k\varepsilon} \leqslant -\frac{\varepsilon}{2} E_k-(m_0-D\varepsilon)(\|u_k'\|_{L^2(\Gamma_1)}^2 + \|v_k'\|_{L^2(\Gamma_1)}^2).
\]
Therefore 
\begin{equation}\label{decay11}
E'_{k\varepsilon_2} \leqslant -\frac{\varepsilon_2}{2} E_k,\quad \mbox{for all} \quad 0\leqslant \varepsilon_2 \leqslant \frac{m_0}{D}.
\end{equation}

\subsection{Proof of Theorem \ref{teo2.2}}
The choice $\tau$ given in \eqref{const} implies that \eqref{AB5} and \eqref{decay11} hold simultaneously for this $\tau$. Thus, from \eqref{AB5} we have
\[
-\frac{\tau}{2} E_k \leqslant -\frac{\tau}{3} E_{k\tau}.
\]
Consequently, using the above inequality in $\eqref{decay11},$ we obtain
\[
E'_{k\tau}  \leqslant -\frac{\tau}{3} E_{k\tau}.
\]
This give us that 
\[
E_{k\tau}(t)  \leqslant e^{-\frac{\tau}{3} t} E_{k\tau}(0).
\]
From this inequality and $\eqref{AB5}$ we have 
\[
E_k(t)  \leqslant 3 E_k(0)e^{-\frac{\tau}{3} t}, \quad \mbox{for all} \quad t \in [0,\infty).
\]
Taking the $\liminf$ of $E_k(t)$ in the above inequality we conclude the proff of Theorem \ref{teo2.2}.


\end{document}